\documentclass[a4paper,11pt]{article}

\usepackage{preamble-fv-simple}

\title{Classes of intersection digraphs with good algorithmic properties}

\author[1]{Lars Jaffke}

\author[2,3]{O-joung Kwon\thanks{Supported by the National Research Foundation of Korea (NRF) grant funded by the Ministry of Education (No. NRF-2018R1D1A1B07050294) and by Institute for Basic Science (IBS-R029-C1).}}

\author[1]{Jan Arne Telle}

\affil[1]{Department of Informatics, University of Bergen, Norway. \protect\\ \texttt{\{lars.jaffke, jan.arne.telle\}@uib.no}}
\affil[2]{Department of Mathematics, Incheon National University, South Korea.}
\affil[3]{Discrete Mathematics Group, Institute for Basic Science (IBS), Daejeon, South Korea. \protect\\ \texttt{ojoungkwon@gmail.com}}

\usepackage{macros}

\hypersetup{
    pdftitle=   {Classes of intersection digraphs with good algorithmic properties},
   pdfauthor=  {Lars Jaffke, O-joung Kwon, and Jan Arne Telle}
}

\date{May 4, 2021}

\begin{document}

\maketitle

\begin{abstract}
An intersection digraph is a digraph where every vertex $v$ is represented by an ordered pair $(S_v, T_v)$ of sets such that there is an edge from $v$ to $w$ if and only if $S_v$ and $T_w$ intersect. An intersection digraph is reflexive if $S_v\cap T_v\neq \emptyset$ for every vertex $v$. Compared to well-known undirected intersection graphs like interval graphs and permutation graphs, not many algorithmic applications on intersection digraphs have been developed. 

Motivated by the successful story on algorithmic applications of intersection graphs using a graph width parameter called mim-width, we introduce its directed analogue called `bi-mim-width' and prove that various classes of reflexive intersection digraphs have bounded bi-mim-width. In particular, we show that as a natural extension of $H$-graphs, reflexive $H$-digraphs have linear bi-mim-width at most $12\abs{E(H)}$, which extends a bound on the linear mim-width of $H$-graphs [On the Tractability of Optimization Problems on $H$-Graphs. Algorithmica 2020].

For applications, we introduce a novel framework of directed versions of locally checkable problems, that streamlines the definitions and the study of many problems in the literature and facilitates their common algorithmic treatment. We obtain unified polynomial-time algorithms for these problems on digraphs of bounded bi-mim-width, when a branch decomposition is given. Locally checkable problems include \textsc{Kernel}, \textsc{Dominating Set}, and \textsc{Directed $H$-Homomorphism}.
\end{abstract}

\section{Introduction}

An undirected graph $G$ is an \emph{intersection graph} if there exists a family $\{S_v:v\in V(G)\}$ of sets such that two vertices $v$ and $w$ in $G$ are adjacent if and only if $S_v$ and $S_w$ intersect. A famous example is an interval graph which is an intersection graph of intervals on a line. There are various intersection graph classes of algorithmic interest; for example, permutation graphs, chordal graphs, circular-arc graphs, circle graphs and so on. We refer to \cite{BrandstadtLS1999} for an overview. It is well known that many NP-hard optimization problems can be solved in polynomial time on simple intersection graph classes.

Intersection graphs have been generalized to digraphs. Beineke and Zamfirescu~\cite{BeinekeZ1982} first introduced intersection digraphs under the name of connection digraphs. A digraph $G$ is an \emph{intersection digraph} 
if there exists a family $\{(S_v, T_v):v\in V(G)\}$ of ordered pairs of sets, called a \emph{representation}, such that there is an edge from $v$ to $w$ in $G$ if and only if $S_v$ intersects $T_w$. Note that we add a loop on a vertex $v$ if $S_v$ and $T_v$ intersect.
Sen, Das, Roy, and West~\cite{SenDRW1989} considered \emph{interval digraphs} that are intersection digraphs $G$ represented by a family $\{(S_v, T_v):v\in V(G)\}$ where all the sets in $\{S_v, T_v:v\in V(G)\}$ are intervals on a line, and provided characterizations of interval digraphs, which are analogous to characterizations for interval graphs. Later on, as direct analogues of circular-arc graphs and permutation graphs, circular-arc digraphs~\cite{SenDW1989} and permutation digraphs~\cite{Muller1997} have been considered. 

M\"uller~\cite{Muller1997} obtained a polynomial-time recognition algorithm for interval digraphs.
However, surprisingly, we could not find any literature that studied algorithmic applications on natural digraph problems on general interval digraphs. We observe that interval digraphs contain, for each integer $n$, some orientation of the $(n\times n)$-grid (see Proposition~\ref{prop:intunbounded}). As interval graphs do not contain an induced subgraph isomorphic to the $1$-subdivision of the claw, underlying undirected graphs of interval digraphs are very different from interval graphs.
Perhaps, this makes it difficult to find algorithmic applications of interval digraphs. 

For this reason, some restrictions of interval digraphs have been considered.
Prisner~\cite{Erich1994} introduced \emph{interval nest digraphs}~\cite{Erich1994} that are interval digraphs where $T_v\subseteq S_v$ for each vertex $v$, and proved that \textsc{Independent Dominating Set} and \textsc{Kernel} can be solved in polynomial time on interval nest digraphs, if a representation is given. Feder, Hell, Huang, and Rafiey~\cite{FederHHR2012} introduced another type of an interval digraph, called an \emph{adjusted interval digraph}, which has an interval digraph representation where for each vertex $v$, $S_v$ and $T_v$ have the same left endpoint. This class has been studied in connection with the \textsc{List Homomorphism} problem. A common point of these variants is that the corresponding representation requires that for each vertex $v$, $S_v$ and $T_v$ intersect, and so $v$ has a loop. We say that a digraph is \emph{reflexive} if every vertex has a loop.

In this paper, we obtain unified polynomial-time algorithms for several digraph problems on many classes of reflexive intersection digraphs. To do so, we introduce a new digraph width parameter, called \emph{bi-mim-width}, which is a directed analogue of the mim-width of an undirected graph introduced by Vatshelle~\cite{VatshelleThesis}. Belmonte and Vatshelle~\cite{BelmonteV2013} showed that many intersection graph classes have bounded mim-width, including interval graphs and permutation graphs. 
Briefly speaking, the bi-mim-width of a digraph $G$ is defined as a branch-width, with a cut function that measures, for a vertex partition $(A, B)$ of $G$, the sum of the sizes of maximum induced matchings in two bipartite digraphs, one  induced by edges from $A$ to $B$, and the other  induced by edges from $B$ to $A$. This is similar to how rank-width is generalized to bi-rank-width for digraphs~\cite{Kante2007, KanteR2013}. We formally define bi-mim-width and linear bi-mim-width in Section~\ref{sec:bimimwidth}. We compare bi-mim-width and other known width parameters. The mim-width of an undirected graph is exactly the half of the bi-mim-width of the digraph obtained by replacing each edge with bi-directed edges, and this observation can be used  to argue that a bound on the bi-mim-width of a class of digraphs implies a bound on the mim-width of a certain class of undirected graphs.

Telle and Proskurowski~\cite{TelleP1997} introduced locally checkable vertex subset problems (LCVS problems) and vertex partition problems (LCVP problems). LCVS problems include \textsc{Independent Set} and \textsc{Dominating Set} and LCVP problems include  \textsc{$H$-Homomorphism}. Bui-Xuan, Telle, and Vatshelle~\cite{Bui-XuanTV13} showed that all these problems can be solved in time XP parameterized by mim-width, if a corresponding decomposition is given.

\begin{table}[tb]
    \centering
    \begin{tabular}{|l|l|l|l|l|l|l|l|}   \hline
    $\sigma^+$ & $\sigma^-$ & $\rho^+$ & $\rho^-$ & Standard name \\
    \hline \hline
       $\{0\}$ & $\{0\}$ & $\mathbb{N}\setminus \{0\}$ & $\mathbb{N}$ & Kernel  ~\cite{NeumannM1944} \\
        $\{0,...,k-1\}$ & $\{0\}$ & $\{i: i \geq l\}$ & $\mathbb{N}$ & $(k,l)$-out Kernel \cite{Ramoul}\\
     $\mathbb{N}$ & $\mathbb{N}$ & $\mathbb{N}$ & $\mathbb{N}\setminus \{0\}$ &  Dominating set  \cite{GLP} \\
      $\{0\}$ & $\{0\}$ & $\mathbb{N}$ & $\mathbb{N}\setminus \{0\}$ & Independent Dominating set  \cite{CMC} \\
      $\mathbb{N}$ & $\mathbb{N}$ &  $\mathbb{N}\setminus \{0\}$ & $\mathbb{N}$ & in-Dominating set  \cite{Fu68}\\
         $\mathbb{N}$ & $\mathbb{N}$ & $\mathbb{N}\setminus \{0\}$ & $\mathbb{N}\setminus \{0\}$ &  Twin Dominating set \cite{CDSS}\\
       $\mathbb{N}$ & $\mathbb{N}$ & $\mathbb{N}$ & $\{i:i\ge k\}$ & $k$-Dominating set \cite{OBB} \\
      $\mathbb{N}$ & $\mathbb{N}\setminus \{0\}$ & $\mathbb{N}$ &    $\mathbb{N}\setminus \{0\}$ &  Total Dominating set \cite{Arumugan}\\
       $\{0\}$ & $\{0\}$ & $\mathbb{N}$ & $ \{1\}$ & Efficient (Closed) Dominating set \cite{Bange}\\
      $\mathbb{N}$ & $\{1\}$ & $\mathbb{N}$ & $ \{1\}$ & Efficient Total Dominating set \cite{Schaudt}\\
     $\{k\}$ & $\{k\}$ & $\mathbb{N}$ & $\mathbb{N}$ & $k$-Regular Induced Subdigraph \cite{CML}\\
   \hline
    \end{tabular}
    \caption{Examples of $(\sigma^+, \sigma^-, \rho^+, \rho^-)$-sets. For any row there is an associated NP-complete problem, usually maximizing or minimizing the cardinality of a set with the property. Some properties are known under different names; e.g. Efficient Total Dominating sets are also called Efficient Open Dominating sets, and here even the existence of such a set in a digraph $G$ is NP-complete, as it corresponds to deciding if $V(G)$ can be partitioned by the open out-neighborhoods of some $S \subseteq V(G)$. If rows A and B have their in-restrictions and out-restrictions swapped for both $\sigma$ and $\rho$ (i.e. $\sigma^+$ of row A equals $\sigma^-$ of row B and vice-versa, and same for $\rho^+$ and $\rho^-$), then a row-A set in $G$ is always a row-B set in the digraph with all arcs of $G$ reversed; this is the case for Dominating set vs in-Dominating set and for Kernel vs Independent Dominating set.}
    \label{tab1}
    \end{table}
    
We introduce directed LCVS and LCVP problems. 
A directed LCVS problem is represented as a $(\sigma^+, \sigma^-, \rho^+, \rho^-)$-problem for some $\sigma^+, \sigma^-, \rho^+, \rho^- \subseteq \bN$, and  it asks to find a maximum or minimum vertex set $S$ in a digraph $G$ such that for every vertex $v$ in $S$, the numbers of out/in-neighbors in $S$ are contained in $\sigma^+$ and $\sigma^-$, respectively, and for every vertex $v$ in $V(G)\setminus S$, the numbers of out/in-neighbors in $S$ are contained in $\rho^+$ and $\rho^-$, respectively. See Table~\ref{tab1} for several examples that appear in the literature. In particular, it includes the \textsc{Kernel} problem, which was introduced by von Neumann and Morgenstern~\cite{NeumannM1944}.

A directed LCVP problem is represented by a $(q \times q)$-matrix $D$ for some positive integer $q$, where for all $i, j\in \{1, \ldots, q\}$, $D[i,j]=(\mu_{i,j}^+,\mu_{i,j}^-)$ for some $\mu_{i,j}^+,\mu_{i,j}^-\subseteq \bN$. The problem asks to find a vertex partition of a given digraph into $X_1, X_2, \ldots, X_q$ such that for all $i, j\in [q]$, the numbers of out/in-neighbors of a vertex of $X_i$ in $X_j$ are contained in $\mu_{i,j}^+$ and $\mu_{i,j}^-$, respectively.
\textsc{Directed $H$-Homomorphism} is a directed LCVP problem:
For a digraph $H$ on vertices $\{1, \ldots, q\}$, 
we can view a homomorphism from a digraph $G$ to $H$ as a $q$-partition $(X_1, \ldots, X_q)$ of $V(G)$ such that we can only have an edge from $X_i$ to $X_j$ if the edge $(i, j)$ is present in $H$. See \cref{tab:lcvp}.
The \textsc{Oriented $k$-Coloring} problem, introduced by Sopena~\cite{Sopena1997}, asks whether there is a homomorphism to some orientation of a complete graph on at most $k$ vertices, and can therefore be reduced to a series of directed LCVP problems.
Several works in the literature concern problems of $2$-partitioning the vertex sets of digraphs into parts with degree constraints either inside or between the parts of the partition~\cite{AlonEtAl2020,BangJensenEtAl2018,BangJensenEtAl2019,BangJensenChristiansen2018,BangJensenEtAl2016}. 
All of these problems can be observed to be LCVP problems as well, 
see~\cref{tab:lcvp}.
Note that in the LCVP-framework, we can consider $q$-partitions for any fixed $q \ge 2$, for all problems apart from \textsc{$2$-Out-Coloring}. 
This fails for \textsc{$q$-Out-Coloring}, since this problem asks for a $q$-coloring with no monochromatic out-neighborhood.
\begin{table}[tb]
	\centering
	\begin{tabular}{|l|c|c|}
		\hline
		Problem name & $q$ & LCVP $(q \times q)$-matrix $D$ \\
		\hline\hline
		Directed $H$-Homomorphism~\cite{HellNesetril2004} & $\card{V(H)}$ & 
		 $\forall (i, j) \in E(H)\colon D[i, j] = (\bN, \bN)$ \\
			% \mu^+_{i, j} = \bN,~\mu^-_{i, j} = \bN$ \\
		 &	& $\forall (i, j) \notin E(H)\colon D[i, j] = (\{0\}, \{0\})$ \\
			% \mu^+_{i, j} = \{0\},~\mu^-_{i, j} = \{0\}$\\
		\hline
%		$k$-Directed Chromatic Number & For each orientation $H$ of $K_k$, solve $H$-Homomorphism \\
		Oriented $k$-Coloring~\cite{Courcelle1994,Sopena2016} & $k$ &
			$\bigvee_{H \colon \overrightarrow{K_k}} \mbox{Directed $H$-Homomorphism}$ \\
				\hline
		$\exists$ $(\sigma^+, \sigma^-,\rho^+,\rho^-)$-set  [This paper]&
		$2$ & 
		$\begin{pmatrix}
		    (\sigma^+, \sigma^-) & (\bN, \bN) \\
		    (\rho^+, \rho^-) &  (\bN, \bN)
		\end{pmatrix}$ \\
		\hline
		$(\delta^+ \ge k_1, \delta^- \ge k_2)$-Partition~\cite{BangJensenEtAl2016} & $2$ & 
		$\begin{pmatrix}
		    (\{j \colon j \ge k_1\}, \bN) & (\bN, \bN) \\
		    (\bN, \bN) & (\bN, \{j \colon j \ge k_2\})
		\end{pmatrix}$
		\\
		\hline
		$(\delta^+ \ge k_1, \delta^+ \ge k_2)$-Partition~\cite{BangJensenChristiansen2018} &
		$2$ & 
		$\begin{pmatrix}
		    (\{j \colon j \ge k_1\}, \bN) & (\bN, \bN) \\
		    (\bN, \bN) & (\{j \colon j \ge k_2\}, \bN)
		\end{pmatrix}$ \\
		\hline
		$(\Delta^+ \le k_1, \Delta^+ \le k_2)$-Partition~\cite{BangJensenEtAl2018} 
		    & $2$ &
		$\begin{pmatrix}
		    (\{j \colon j \le k_1\}, \bN) & (\bN, \bN) \\
		    (\bN, \bN) & (\{j \colon j \le k_2\}, \bN)
		\end{pmatrix}$ \\
		\hline
		 $(\delta^+ \ge k_1, \delta^- \ge k_2)$-Bipartite-Partition~\cite{BangJensenEtAl2019}
		 & $2$ &
		 $\begin{pmatrix}
		    (\bN, \bN) & (\{j\colon j \ge k_1\}, \bN) \\
		    (\bN, \{j \colon j \ge k_2\}) & (\bN, \bN)
		 \end{pmatrix}$
		 \\
		\hline
		 $(\delta^+ \ge k_1, \delta^+ \ge k_2)$-Bipartite-Partition~\cite{BangJensenEtAl2019}
		 & 2 &
		 $\begin{pmatrix}
		    (\bN, \bN) & (\{j \colon j \ge k_1\}, \bN) \\
		    (\{j \colon j \ge k_2\}, \bN) & (\bN, \bN)
		 \end{pmatrix}$
		 \\
		\hline
		$2$-Out-Coloring~\cite{AlonEtAl2020} 
		& $2$ & 
		$\begin{pmatrix}
		    (\bN \setminus \{0\}, \bN) & (\bN \setminus \{0\}, \bN) \\
		    (\bN \setminus \{0\}, \bN) & (\bN \setminus \{0\}, \bN)
        \end{pmatrix}$
		\\
		\hline
	\end{tabular}
	\caption{Examples of directed LCVP problems. 
    For every row there are choices of values for which the problems are NP-complete.
	For Directed $H$-Homomorphism let 
	$V(H) = \{1, \ldots, \card{V(H)}\}$ and denote by $H \colon \protect\overrightarrow{K_k}$ that $H$ is a tournament on $k$ vertices.
}
\label{tab:lcvp}
\end{table}

\begin{mainthm}\label{thm:xpalgorithm}
Directed LCVS and LCVP problems can be solved in time XP parameterized by bi-mim-width, when a branch decomposition is given.
\end{mainthm}

Furthermore, we show that the distance variants of directed LCVS problems, for instance \textsc{\mbox{Distance-$r$} Dominating Set} can be solved in polynomial time on digraphs of bounded bi-mim-width. Another natural variant is the \textsc{$k$-Kernel} problem (see~\cite[Section 8.6.2]{digraphbook2018}), which asks for a kernel in the $(k-1)$-th power of a given digraph.
To show this, we prove that  the $r$-th power of a digraph of bi-mim-width $w$ has bi-mim-width at most $rw$ (Lemma~\ref{lem:powerbimim}). For undirected graphs, there is a bound that does not depend on $r$~\cite{JaffkeKST2019}, but we were not able to obtain such a bound for the directed case.  
\begin{mainthm}\label{thm:xpalgorithmdistance}
Distance variants of directed LCVS problems can be solved in time XP parameterized by bi-mim-width, when a branch decomposition is given.
\end{mainthm}

We provide various classes of digraphs of bounded bi-mim-width. We first summarize our results in the following theorem and give the background below. We illustrate the bounds in \cref{fig:classes}.

\begin{mainthm}\label{thm:bddbimim}
\begin{enumerate}
    \item\label{refint} Given a reflexive interval digraph, one can output a linear branch decomposition of bi-mim-width at most $2$ in polynomial time. On the other hand, interval digraphs have unbounded bi-mim-width.
    \item Given a representation of an adjusted permutation digraph $G$, one can construct in polynomial time a linear branch decomposition of $G$ of bi-mim-width at most $4$. Permutation digraphs have unbounded bi-mim-width.
    \item Given a representation of an adjusted rooted directed path digraph $G$, one can construct in polynomial time a branch decomposition of $G$ of bi-mim-width at most $2$. Rooted directed path digraphs have unbounded bi-mim-width and adjusted rooted directed path digraphs have unbounded linear bi-mim-width. 
    \item Let $H$ be an undirected graph. Given a representation of a reflexive $H$-digraph $G$, one can construct in polynomial time a linear branch decomposition of $G$ of bi-mim-width at most $12\abs{E(H)}$. $P_2$-digraphs, which are interval digraphs, have unbounded bi-mim-width.
    \item Let $H$ be an undirected graph. Given a nice $H$-convex digraph $G$ with its bipartition $(A,B)$, one can construct in polynomial time a linear branch decomposition of $G$ of bi-mim-width at most $12\abs{E(H)}$. $P_2$-convex digraphs have unbounded bi-mim-width.
    \item Tournaments and directed acyclic graphs have unbounded bi-mim-width.
\end{enumerate}
\end{mainthm}

\begin{figure}
    \centering
    \includegraphics[width=\textwidth]{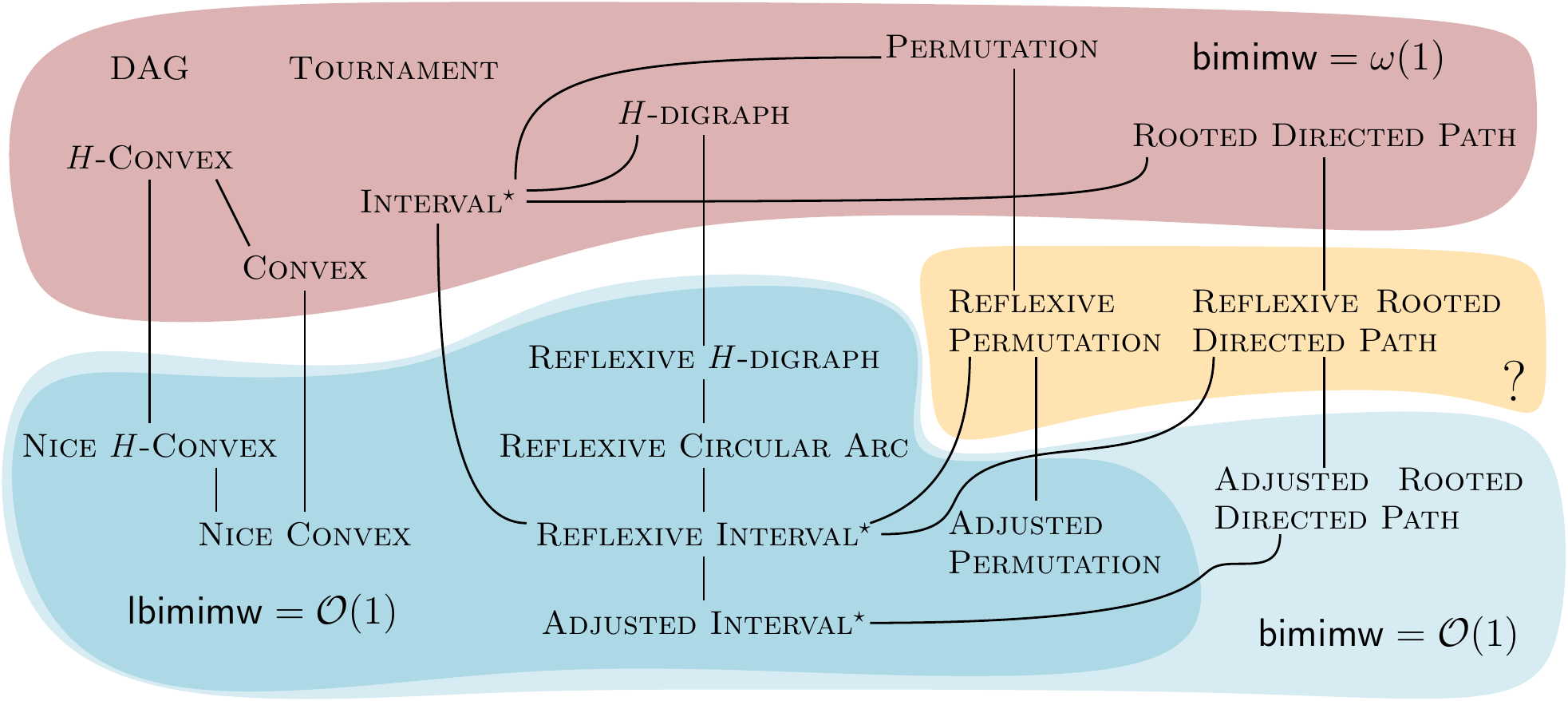}
    \caption{Digraph classes with bounds on their (linear) bi-mim-width. For graph classes marked with $^\star$ there are polynomial-time algorithms to compute representations of their members. 
    If digraph class $A$ is depicted above $B$ and there is an edge between $A$ and $B$ then $B \subseteq A$.}
    \label{fig:classes}
\end{figure}

((i). Interval digraphs) Recall that M\"uller~\cite{Muller1997} devised a recognition algorithm for interval digraphs, which also outputs a representation. By testing the reflexivity of a digraph, we can recognize reflexive interval digraphs, and output its representation. We convert it into a linear branch decomposition of bi-mim-width at most 2. On the other hand, interval digraphs generally have unbounded bi-mim-width.
% \begin{theorem}\label{thm:bimim-interval}
% Given a reflexive interval digraph, one can output a linear branch decomposition of bi-mim-width at most $2$ in polynomial time.
% \end{theorem}
By Theorem~\ref{thm:xpalgorithm}, we can solve all directed LCVS and LCVP problems on reflexive interval digraphs in polynomial time. This extends the polynomial-time algorithms for \textsc{Independent Dominating Set} and \textsc{Kernel} on interval nest digraphs given by Prisner~\cite{Erich1994}. 

\medskip

((ii). Permutation digraphs) A \emph{permutation digraph} is an intersection digraph of pairs of line segments whose endpoints lie on two parallel lines. M\"{u}ller~\cite{Muller1997} considered permutation digraphs under the name `matching diagram digraph', and observed that every interval digraph is a permutation digraph. Therefore, permutation digraphs have unbounded bi-mim-width.
We say that a permutation digraph is \emph{adjusted} if there exists one of the parallel lines, say $\Lambda$, such that for all $v\in V(G)$, $S_v$ and $T_v$ have the same endpoint in $\Lambda$. We show that every adjusted permutation digraph has linear mim-width at most 4. 

\medskip

((iii). Rooted directed path digraphs)  It is known that chordal graphs have unbounded mim-width~\cite{KangKST2017, Mengel18}. As restrictions of chordal graphs, it has been shown that rooted directed path graphs, and more generally, leaf power graphs have mim-width at most $1$~\cite{JaffkeKST2019}, while they have unbounded linear mim-width.  
A \emph{rooted directed path digraph} is an intersection digraph of pairs of directed paths in a rooted directed tree (every node is reachable from the root), and it is \emph{adjusted} if for every vertex $v$, the endpoint of $S_v$ that is farther from the root is the same as the endpoint of $T_v$ that is farther from the root. We show that every adjusted rooted directed path digraph has bi-mim-width at most $2$. Since this class includes the biorientations of trees, it has unbounded linear bi-mim-width.

\medskip

((iv). $H$-digraphs)
For an undirected graph $H$, an \emph{$H$-graph} is an undirected intersection graph of connected subgraphs in an $H$-subdivision, introduced by B\'{\i}r\'{o}, Hujter, and Tuza~\cite{BiroHT1992}. For example, interval graphs and circular-arc graphs are $P_2$-graphs and $C_3$-graphs, respectively. 
Fomin, Golovach, and Raymond~\cite{FominGR2020} showed that $H$-graphs have linear mim-width at most $2\abs{E(H)}+1$.
Motivated by $H$-graphs, we introduce an \emph{$H$-digraph} that is the intersection digraph of pairs of connected subgraphs in an $H$-subdivision (where $H$ and its subdivision are undirected). We prove that reflexive $H$-digraphs have linear bi-mim-width at most $12\abs{E(H)}$. This extends the linear bound of Fomin et al.~\cite{FominGR2020} for $H$-graphs.

\medskip

((v). $H$-convex digraphs)
  For an undirected graph $H$, a bipartite digraph $G$ with bipartition $(A, B)$ is an \emph{$H$-convex digraph}, if there exists a subdivision $F$ of $H$ with $V(F)=A$ such that for every vertex $b$ of $B$, each of the set of out-neighbors and the set of in-neighbors of $v$ induces a connected subgraph in $F$. We say that an $H$-convex digraph is \emph{nice} if for every vertex $b$ of $B$, there is a bi-directed edge between $b$ and some vertex of $A$.  Note that $H$-convex graphs, introduced by  Brettell, Munaro, and Paulusma~\cite{BrettellMP2020}, can be seen as nice $H$-convex digraphs, by replacing every edge with bi-directed edges. We prove that nice $H$-convex digraphs have linear bi-mim-width at most $12\abs{E(H)}$.
 This implies that $H$-convex graphs have linear mim-width at most $6\abs{E(H)}$. 
  Brettell et al.~\cite{BrettellMP2020} showed that for every tree $T$ with maximum degree $\Delta$ and $t$ branching nodes, $T$-convex graphs have mim-width at most $\max\{2\lfloor (\Delta/2)^2 \rfloor, 2\Delta-1\} +2^{t-1}\Delta$. As such trees have at most $(t\Delta)/2$ edges, our result implies an upper bound of $3t\Delta$ which is an improvement for $t\ge 5$.

%  For a family $\mathcal{H}$ of undirected graphs, a bipartite graph~$G$ with bipartition $(A, B)$ is \emph{$\mathcal{H}$-convex} if there exists a graph $F\in \mathcal{H}$ with $V(F)=A$ such that the set of neighbors of each vertex of $B$ induces a connected subgraph of $F$.
%  For simplicity, when $\mathcal{H}$ is the set of all subdivisions of a fixed graph $H$, we call it an $H$-convex graph. Note that original convex graphs are exactly $P_2$-convex graphs.
%  Brettell et al.~\cite{BrettellMP2020} showed that $C_3$-convex graphs have mim-width at most $2$, and for every tree $T$ with maximum degree $\Delta$ and $t$ branching nodes, $T$-convex graphs have mim-width at most $\max\{2\lfloor (\Delta/2)^2 \rfloor, 2\Delta-1  \} +2^{t-1}\Delta$. 

 \medskip
 ((vi). Directed acyclic graphs and tournaments) 
 We show that if $H$ is the underlying undirected graph of a digraph $G$, then the bi-mim-width of $G$ is at least the mim-width of $H$. Using this, we can show that acyclic orientations of grids have unbounded bi-mim-width.
 We also prove that tournaments have unbounded bi-mim-width. This refines an argument that they have unbounded bi-rank-width~\cite[Lemma 9.9.11]{digraphbook2018} and shows that a class of digraphs may have unbounded bi-mim-width even though their underlying undirected graphs have bounded mim-width (even bounded rank-width).

We can summarize our algorithmic results as follows.

\begin{maincor}
Given a reflexive interval digraph, or a representation of either an adjusted permutation digraph,  or an adjusted rooted  directed path digraph,  or a reflexive $H$-digraph, or a nice $H$-convex digraph, we can solve all directed LCVS and LCVP problems, and distance variants of directed LCVS problems, in polynomial time. 
\end{maincor}

The paper is organized as follows. In Section~\ref{sec:prelim}, we introduce basic notations. In Section~\ref{sec:bimimwidth}, we formally introduce bi-mim-width and compare with other known width parameters. In Section~\ref{sec:classes}, we prove Theorem~\ref{thm:bddbimim}, and in Section~\ref{sec:application}, we prove Theorems~\ref{thm:xpalgorithm} and \ref{thm:xpalgorithmdistance}.

\section{Preliminaries}\label{sec:prelim}

For a positive integer $n$, we use the shorthand 
$[n] \defeq \{1, \ldots, n\}$.

\paragraph*{Undirected Graphs.}
We use standard notions of graph theory and refer to~\cite{Diestel2010} for an overview.
All undirected graphs considered in this work are finite and simple.
For an undirected graph $G$, we denote by $V(G)$ the vertex set of $G$ and $E(G)$ the edge set of $G$.
For an edge $\{u, v\} \in E(G)$, we may use the shorthand `$uv$'. 

For an undirected graph $G$ and two disjoint vertex sets $A, B \subseteq V(G)$, we denote by 
$G[A, B]$ the bipartite graph on bipartition $(A, B)$ such that $E(G[A, B])$ is exactly the set of edges of $G$ incident with both $A$ and $B$.

% For two vertices $u, v \in V(G)$, the \emph{distance} between $u$ and $v$, denoted by $\dist_G(u, v)$ or simply $\dist(u, v)$, is the length of the shortest path between $u$ and $v$.
% For $u \in V(G)$ and $A \subseteq V(G)$, we let $\dist_G(u, A) = \min_{v \in A} \dist_G(u, v)$.

\paragraph*{Digraphs.}
All digraphs considered in this work are finite and have no multiple edges, but may have loops.
For a digraph $G$, we denote by $V(G)$ its vertex set and by $E(G) \subseteq V(G) \times V(G)$ its edge set. 
We say that an edge $(u, v) \in E(G)$ is directed from $u$ to $v$. 

For a digraph $G$, the undirected graph obtained by replacing every edge with undirected edge and then removing multiple edges is called its \emph{underlying undirected graph}. For an undirected graph $G$, a digraph obtained by replacing every edge $uv$ with one of $(u,v)$ and $(v,u)$ is called its \emph{orientation}, and the digraph obtained by replacing every edge $uv$ with two directed edges $(u,v)$ and $(v,u)$ is called its \emph{biorientation}.
 
For a digraph $G$ and two disjoint vertex sets $A, B \subseteq V(G)$, we denote by 
$G[A \rightarrow B]$ the bipartite digraph on bipartition $(A, B)$
with edge set $E(G[A, B]) = E(G) \cap (A \times B)$, 
and denote by $G[A, B]$ the bipartite digraph on bipartition $(A,B)$ with edge set $E(G[A \to B]) \cup E(G[B \to A])$. We denote by $M_G[A\to B]$ the matrix whose columns are indexed by~$A$ and rows are indexed by $B$ such that for $a\in A$ and $b\in B$, 
    $M_G[A\to B](a,b)=1$ if there is an edge from $a$ to $b$ and $0$ otherwise. 

A \emph{tournament} is an orientation of a complete graph.

\paragraph*{Common notations.}
Let $G$ be an undirected graph or a digraph.
A set $M$ of edges in $G$ is a \emph{matching} if no two edges share an endpoint, and it is an \emph{induced matching} if there are no edges in $G$ meeting two distinct edges in $M$.
We denote by $\nu(G)$ the maximum size of an induced matching  of $G$.

For two undirected graphs or two directed graphs $G$ and $H$, we denote by $G\cap H=(V(G)\cap V(H), E(G)\cap E(H))$ and $G\cup H=(V(G)\cup V(H), E(G)\cup E(H))$.

For a vertex set $A$ of $G$, we denote by $\bar{A}:=V(G)\setminus A$.
A vertex bipartition $(A, \bar{A})$ of $G$ for some vertex set $A$ of $G$ will be called a \emph{cut}. A cut $(A, \bar{A})$ of $G$ is \emph{balanced} if $\abs{V(G)}/3<\abs{A}\le 2\abs{V(G)}/3$.

For two vertices $u, v \in V(G)$, the \emph{distance} between $u$ and $v$, denoted by $\dist_G(u, v)$ or simply $\dist(u, v)$, is the length of the shortest path from $u$ to $v$ (if $G$ is a digraph, then we consider directed paths).
% For $u \in V(G)$ and $A \subseteq V(G)$, we let $\dist_G(u, A) = \min_{v \in A} \dist_G(u, v)$.
For a positive integer $d$, we denote by $G^d$ the graph obtained from $G$ by, for every pair $(x,y)$ of vertices in $G$, adding an edge from $x$ to $y$ if there is a path of length at most $d$ from $x$ to $y$ in $G$. We call it the \emph{$d$-th power} of $G$.

\section{Bi-mim-width}\label{sec:bimimwidth}

In this section, we introduce the bi-mim-width of a digraph. 
For an undirected graph $G$ and $A\subseteq V(G)$, let $\mimval_G(A):=\nu(G[A, \bar{A}])$.
For a digraph $G$ and $A\subseteq V(G)$,
\begin{itemize}
    \item let $\mimval^+_G(A):=\nu(G[A\to \bar{A}])$ and $\mimval^-_G(A):=\nu(G[\bar{A}\to A])$ and
    \item  let $\bimimval_G(A):=\mimval^+_G(A)+\mimval^-_G(A)$.
\end{itemize}

A tree is \emph{subcubic} if it has at least two vertices and every internal vertex has degree $3$.
A tree $T$ is a \emph{caterpillar} if it contains a path $P$ such that every vertex in $V(T)\setminus V(P)$ has a neighbor in $P$.
Let $G$ be an undirected graph or a digraph.
A \emph{branch decomposition} of $G$ is a pair $(\decT, \decf)$ of a subcubic tree $\decT$ and a bijection $\decf$ from $V(G)$ to the leaves of $\decT$.
If $\decT$ is a caterpillar, then $(\decT, \decf)$ is called a \emph{linear branch decomposition} of $G$.
\begin{definition}[Bi-mim-width]
    Let $G$ be a digraph and let $(\decT, \decf)$ be a branch decomposition of $G$. 
    For each edge $e \in E(\decT)$, let $\decT_A$ and $\decT_B$ be the connected components of $\decT - e$.
    Let $(A_e, B_e)$ be the cut of $G$ where
    $A_e$ is the set of vertices that $\decf$ maps to the leaves in $\decT_A$ and
    $B_e$ is the set of vertices that $\decf$ maps to the leaves in $\decT_B$.
    The \emph{bi-mim-width} of $(\decT, \decf)$ is
    $\bimimw(\decT, \decf) \defeq \max_{e \in E(\decT)}
        \left(\bimimval_G(A_e)\right).$
    The \emph{bi-mim-width} of $G$, denoted by $\bimimw(G)$, is the minimum bi-mim-width of any branch decomposition of $G$.
    The \emph{linear bi-mim-width} of $G$, denoted by $\linbimimw(G)$, is the minimum bi-mim-width of any linear branch decomposition of $G$.
\end{definition}

This is motivated by the mim-width of an undirected graph introduced by Vatshelle~\cite{VatshelleThesis}. 

\begin{definition}[Mim-width]
    Let $G$ be an undirected graph and let $(\decT, \decf)$ be a branch decomposition of $G$. 
    For each edge $e \in E(\decT)$, let $\decT_A$ and $\decT_B$ be the two connected components of $\decT - e$.
    Let $(A_e, B_e)$ be the cut of $G$ where
    $A_e$ is the set of vertices that $\decf$ maps to the leaves in $\decT_A$ and
    $B_e$ is the set of vertices that $\decf$ maps to the leaves in $\decT_B$.
    The \emph{mim-width} of $(\decT, \decf)$ is
    $ \mimw(\decT, \decf) \defeq \max_{e \in E(\decT)}
        \mimval_G(A_e).$
    The \emph{mim-width} of $G$, denoted by $\mimw(G)$, is the minimum mim-width of any branch decomposition of $G$.
    The \emph{linear mim-width} of $G$, denoted by $\linmimw(G)$, is the minimum mim-width of any linear branch decomposition of~$G$.
\end{definition}

The following two lemmas are clear by definition.
\begin{lemma}\label{lem:subdigraph}
Let $G$ be a digraph and let $H$ be an induced subdigraph of $G$. Then $\bimimw(H)\le \bimimw(G)$ and $\linbimimw(H)\le \linbimimw(G)$. 
\end{lemma}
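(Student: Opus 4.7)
The plan is to take an optimal (linear) branch decomposition of $G$ and extract a restricted decomposition for $H$ by deleting the leaves corresponding to $V(G)\setminus V(H)$ and then suppressing the resulting degree-$2$ internal nodes.

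First I would let $(\decT,\decf)$ be a branch decomposition of $G$ with $\bimimw(\decT,\decf)=\bimimw(G)$. Form $\decT'$ from $\decT$ by iteratively deleting every leaf whose preimage under $\decf$ lies in $V(G)\setminus V(H)$, and then suppressing every resulting vertex of degree~$2$. The remaining leaves are exactly the images $\decf(v)$ for $v\in V(H)$, so $\decf$ restricts to a bijection $\decf'$ from $V(H)$ to the leaves of $\decT'$, and $\decT'$ is subcubic. This yields a branch decomposition $(\decT',\decf')$ of $H$. Moreover, if $\decT$ is a caterpillar then the deletion-and-suppression operation preserves this property, since the spine restricts to a path whose pendant leaves are exactly the leaves of $\decT'$; so linearity is preserved.

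Next I would relate the cuts. For every edge $e'\in E(\decT')$ there exists an edge $e\in E(\decT)$ such that the two components of $\decT-e$ contain exactly the leaves of the two components of $\decT'-e'$ (the edges of $\decT'$ correspond to paths in $\decT$ whose internal nodes were suppressed). Hence the induced cut $(A'_{e'},B'_{e'})$ of $H$ satisfies $A'_{e'}=A_e\cap V(H)$ and $B'_{e'}=B_e\cap V(H)$. Because $H$ is an induced subdigraph of $G$, any induced matching in $H[A'_{e'}\to B'_{e'}]$ is also an induced matching in $G[A_e\to B_e]$, so $\mimval^+_H(A'_{e'})\le \mimval^+_G(A_e)$, and symmetrically $\mimval^-_H(A'_{e'})\le \mimval^-_G(A_e)$. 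Summing, $\bimimval_H(A'_{e'})\le \bimimval_G(A_e)\le \bimimw(\decT,\decf)$.

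Taking the maximum over $e'\in E(\decT')$ gives $\bimimw(H)\le \bimimw(\decT',\decf')\le \bimimw(G)$, and the same argument starting from a linear branch decomposition gives $\linbimimw(H)\le \linbimimw(G)$. There is no real obstacle; the only subtlety is checking that restricting an induced matching to a subset of vertices in an \emph{induced} subdigraph still yields an induced matching, which is immediate from the definition of an induced subdigraph. Edge cases where $|V(H)|\le 1$ are handled by the usual convention that bi-mim-width is~$0$ there.
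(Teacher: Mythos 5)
Your proof is correct and is precisely the standard restriction argument (delete leaves outside $V(H)$, suppress degree-$2$ nodes, note that induced matchings across the restricted cuts survive because $H$ is an \emph{induced} subdigraph); the paper itself gives no proof, dismissing the lemma as ``clear by definition,'' and your write-up is exactly what that remark implicitly relies on. The only cosmetic quibble is that the iterative pruning should also remove former internal nodes of $\decT$ that become leaves (they have no preimage under $\decf$ at all), but your construction clearly intends this and it does not affect correctness.
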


\begin{lemma}\label{lem:biorientation}
Let $G$ be an undirected graph and let $H$ be the biorientation of $G$. Then for every vertex partition $(A,B)$ of $G$, we have $\nu (G[A, B])= \frac{ \nu (H[A \to B])+\nu(H[B\to A])}{2}$. In particular, we have $\mimw(G)=\frac{\bimimw(H)}{2}$.
\end{lemma}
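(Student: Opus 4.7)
The plan is to leverage the symmetry of the biorientation: every undirected edge $uv$ of $G$ with $u\in A$, $v\in B$ corresponds in $H$ to a directed edge $(u,v)\in E(H[A\to B])$ and a directed edge $(v,u)\in E(H[B\to A])$, so the directed bipartite graphs $H[A\to B]$ and $H[B\to A]$ are, as edge sets, ``copies'' of $G[A,B]$ with all edges given one consistent orientation.

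First I would establish the identity $\nu(H[A\to B])=\nu(G[A,B])$. Given an induced matching $\{a_1b_1,\ldots,a_kb_k\}$ in $G[A,B]$, the set $\{(a_i,b_i):i\in[k]\}$ is a matching in $H[A\to B]$, and since every edge of $H[A\to B]$ has tail in $A$ and head in $B$, the only potential ``extra'' edges among these $2k$ endpoints are of the form $(a_i,b_j)$ with $i\ne j$; these exist in $H$ iff $a_ib_j\in E(G)$, which is excluded by the induced-matching condition in $G[A,B]$. The converse direction is symmetric: an induced matching in $H[A\to B]$ yields an induced matching in $G[A,B]$ by forgetting orientations. The exact same argument, swapping the roles of $A$ and $B$, gives $\nu(H[B\to A])=\nu(G[A,B])$. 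Adding these two equalities and dividing by $2$ yields
\[
\nu(G[A,B])\;=\;\tfrac{\nu(H[A\to B])+\nu(H[B\to A])}{2},
\]
which is the first claim.

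For the ``in particular'' statement, I would observe that branch decompositions of $G$ and of $H$ are literally the same object, since the definition depends only on the vertex set $V(G)=V(H)$. Moreover, the first claim gives, for every $A\subseteq V(G)$,
\[
\bimimval_H(A)\;=\;\mimval^+_H(A)+\mimval^-_H(A)\;=\;2\,\mimval_G(A).
\]
Hence, for any branch decomposition $(\decT,\decf)$, the maximum of $\bimimval_H$ over the cuts $(A_e,B_e)$ equals twice the maximum of $\mimval_G$ over the same cuts, so $\bimimw(\decT,\decf)=2\mimw(\decT,\decf)$. Taking the minimum over all branch decompositions gives $\mimw(G)=\bimimw(H)/2$.

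There is no real obstacle here: the only point requiring care is verifying that the induced-matching condition in the directed bipartite graph $H[A\to B]$ really does coincide with the one in the undirected bipartite graph $G[A,B]$, which works precisely because $H[A\to B]$ has all its edges oriented in a single direction, so no ``back-edges'' need to be checked.
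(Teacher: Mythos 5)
Your proof is correct and is exactly the argument the paper has in mind: the paper states this lemma without proof (``clear by definition''), and your write-up simply makes explicit the observation that $H[A\to B]$ and $H[B\to A]$ are each single-orientation copies of $G[A,B]$ with identical induced-matching structure. Nothing is missing.
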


We show that if a digraph $G$ has small bi-mim-width, then its underlying undirected graph has small mim-width. But the other direction does not hold; the class of tournaments has unbounded bi-mim-width. 

\begin{lemma}\label{lem:underlying}
Let $G$ be a digraph and let $H$ be the underlying undirected graph of $G$. Then $\mimw(H)\le \bimimw(G)$ and $\linmimw(H)\le \linbimimw(G)$. On the other hand, the class of tournaments has unbounded bi-mim-width, while their underlying undirected graphs have linear mim-width~$1$.
\end{lemma}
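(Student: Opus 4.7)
The plan is to split the proof into the inequality $\mimw(H)\le \bimimw(G)$ (with its linear analogue) and the unboundedness of bi-mim-width on tournaments. The inequality will follow from a direct edge-transfer argument on each cut of a fixed branch decomposition. The unboundedness will be obtained indirectly: I will use the inequality together with induced-subdigraph monotonicity (Lemma~\ref{lem:subdigraph}) to lift an orientation of an undirected graph of large mim-width to a tournament of large bi-mim-width.

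For the inequality, observe that since $V(G)=V(H)$, any (linear) branch decomposition of $G$ is simultaneously a (linear) branch decomposition of $H$, so it suffices to prove the pointwise bound $\mimval_H(A)\le \bimimval_G(A)$ for every $A\subseteq V(G)$. I would take a maximum induced matching $M$ in $H[A,\bar{A}]$, writing each edge as $u_e v_e$ with $u_e\in A$ and $v_e\in \bar{A}$. Because $u_e v_e\in E(H)$, at least one of $(u_e,v_e),(v_e,u_e)$ lies in $E(G)$, so $M$ splits into $M^+$ (edges with $(u_e,v_e)\in E(G)$) and $M^-:=M\setminus M^+$ (edges with $(v_e,u_e)\in E(G)$, breaking ties arbitrarily). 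The key verification is that $M^+$ lifts to an induced matching in $G[A\to \bar{A}]$ and $M^-$ to one in $G[\bar{A}\to A]$: any extra arc between the endpoints of two distinct edges of $M^+$ in $G[A\to\bar A]$ would project to an edge of $H[A,\bar{A}]$ violating the induced-matching property of $M$, and symmetrically for $M^-$. Hence $|M|=|M^+|+|M^-|\le \bimimval_G(A)$, and taking the maximum over cuts and the minimum over (linear) branch decompositions gives both claimed inequalities.

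For unboundedness, my plan is to avoid a direct analysis of tournaments. For each $k$, pick an undirected graph $H_k$ with $\mimw(H_k)\ge k$ (a sufficiently large grid suffices, since the paper implicitly uses that grids have unbounded mim-width via Proposition~\ref{prop:intunbounded}), orient its edges arbitrarily to obtain a digraph $D_k$, and extend $D_k$ to a tournament $T_k$ by orienting every non-edge of the complete graph on $V(H_k)$ arbitrarily. The first part gives $\bimimw(D_k)\ge \mimw(H_k)\ge k$, and Lemma~\ref{lem:subdigraph} applied to $D_k$ as an induced subdigraph of $T_k$ yields $\bimimw(T_k)\ge \bimimw(D_k)\ge k$. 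Finally, the underlying undirected graph of any tournament on $n\ge 2$ vertices is $K_n$, and every cut $(A,\bar{A})$ of $K_n$ induces a complete bipartite graph whose maximum induced matching is $1$ (any two vertex-disjoint edges of $K_{a,b}$ with $a,b\ge 2$ close up into a $C_4$); hence $\linmimw(K_n)=\mimw(K_n)=1$. The main obstacle I anticipate is the unboundedness statement, since a direct attack via random tournaments or explicit structural arguments would require nontrivial probabilistic or combinatorial estimates; routing through the first inequality and induced-subdigraph monotonicity reduces this to the standard fact that undirected graphs of unbounded mim-width exist.
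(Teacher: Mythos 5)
Your first part is correct and is essentially the paper's own argument: both proofs split a maximum induced matching of $H[A,\bar{A}]$ according to the direction in which each edge is realized in $G$ and observe that the two halves remain induced matchings in $G[A\to\bar{A}]$ and $G[\bar{A}\to A]$ respectively; your pointwise formulation $\mimval_H(A)\le\bimimval_G(A)$ is just a cleaner phrasing of what the paper does cut by cut. Your computation that the underlying undirected graph of a tournament is complete and has (linear) mim-width $1$ is also fine.

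The unboundedness argument, however, has a genuine gap. The digraph $D_k$ obtained by orienting the edges of $H_k$ is a \emph{spanning} subdigraph of the tournament $T_k$, not an \emph{induced} one: since $V(D_k)=V(T_k)$, the only induced subdigraph of $T_k$ on that vertex set is $T_k$ itself, which contains all the arcs you added on the non-edges of $H_k$. Lemma~\ref{lem:subdigraph} therefore does not apply, and bi-mim-width is not monotone under adding arcs---the extra arcs can destroy exactly the induced matchings in the directed cut graphs that witness a large $\bimimval$. Your own closing observation shows why no such shortcut can work: the only bound the first part of the lemma yields for a tournament is $\bimimw(T_k)\ge\mimw(K_n)=1$. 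This is precisely why the paper's proof of this half is nontrivial: it constructs an explicit tournament on a grid-indexed vertex set $\{v_{i,j}\}$, choosing the orientation of each non-grid pair as a function of $(i_2-i_1)\bmod 3$, and then runs a case analysis over balanced cuts (every row meets both sides, versus some row lies entirely on one side) to exhibit an induced matching of size $k$ in one of the two directed cut graphs. Some such explicit construction and cut analysis is unavoidable here; the underlying-graph inequality cannot be leveraged for tournaments.
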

\begin{proof}
    Let $(\decT, \decf)$ be a branch-decomposition of $H$, and assume that it has mim-width $t$. 
    Then, $\decT$ has an edge $e$ inducing a cut $(A_e, B_e)$ such that $\mimval_H(A_e)= t$.
    Let $M$ be a maximum induced matching of $H[A_e, B_e]$, and 
    let $(M_1, M_2)$ be the partition of $M$ such that original edges in $M_1$ are contained in $G[A_e\to B_e]$ and original edges of $M_2$ are contained in 
    $G[B_e\to A_e]$.
    It shows that the sum of the sizes of the maximum induced matchings in $G[A_e\to B_e]$ and in $G[B_e\to A_e]$ is at least $t$.
    This implies that $(\decT, \decf)$ has bi-mim-width at least $t$, as a branch-decomposition of $G$. 
    As we chose $(\decT, \decf)$ arbitrarily, we conclude that $G$ has bi-mim-width at least $\mimw(H)$. The same argument obtained by replacing a branch-decomposition with a linear branch-decomposition shows that $\linmimw(H)\le \linbimimw(G)$.
    
    We prove the second statement. For every integer $n\ge 2$, we define  $G_n$ as the graph on the vertex set $\{v_{i,j}:i,j\in [n]\}$ satisfying that 
    \begin{itemize}
        \item for all $i\in [n]$ and $j_1, j_2\in [n]$ with $j_1<j_2$, there is an edge from $v_{i, j_1}$ to $v_{i, j_2}$,
        \item for all $i\in [n-1]$ and $j\in [n]$, there is an edge from $v_{i, j}$ to $v_{i+1, j}$, 
        \item for all $i_1, i_2, j_1, j_2\in [n]$ with $i_1<i_2$, 
        \begin{itemize}
            \item if $i_2-i_1\equiv 0\pmod 3$ and $j_1\ge j_2$, then there is an edge from $v_{i_1, j_1}$ to $v_{i_2, j_2}$, 
            \item if $i_2-i_1\equiv 0\pmod 3$ and $j_1< j_2$, then  there is an edge from $v_{i_2, j_2}$ to $v_{i_1, j_1}$,
            \item if $i_2-i_1\equiv 1\pmod 3$ and $(i_2, j_2)\neq (i_1+1, j_1)$, then there is an edge from $v_{i_2, j_2}$ to $v_{i_1, j_1}$,
            \item if $i_2-i_1\equiv 2\pmod 3$, then there is an edge from $v_{i_1, j_1}$ to $v_{i_2, j_2}$.
        \end{itemize}
    \end{itemize}
    For each $i\in [n]$, we let $R_i:=\{v_{x,i}:x\in [n]\}$ and 
    $C_i:=\{v_{i,y}:y\in [n]\}$.
    
    We claim that for every positive integer $k$, $G_{18k}$ has bi-mim-width at least $k$. 
    Suppose for contradiction that there is a branch decomposition of $G$ of bi-mim-width at most $k-1$.
    Therefore, there is a balanced cut $(A, B)$ of $G$ where 
    $\bimimval_G(A)\le k-1$.
    Let $n:=18k$.
    
    We divide into two cases.    \medskip
    
    (Case 1. For every $i\in [n]$, $R_i$  contains a vertex of $A$ and a vertex of $B$.)
    Then there exists $a_i\in [n-1]$ for each $i\in [n]$ such that $v_{a_i, i}$ and $v_{a_i+1, i}$ are contained in distinct sets of $A$ and $B$. Let $I_1\subseteq [n]$ be a set of size at least $n/2$ such that either 
    \begin{itemize}
        \item for all $i\in I_1$, $v_{a_i,i}\in A$, or 
        \item for all $i\in I_1$, $v_{a_i,i}\in B$.    \end{itemize}
    Without loss of generality, we assume that for all $i\in I_1$, $v_{a_i,i}\in A$. The proof will be symmetric when $v_{a_i,i}\in B$. Furthermore, we take a subset $I_2$ of $I_1$ of size at least $n/6$ such that 
    integers in $\{a_i:i\in I_2\}$ are pairwise congruent modulo 3.

    Now, we verify that $\{(v_{a_i,i}, v_{a_i+1, i}):i\in I_2\}$ is an induced matching in $G[A\to B]$.
    Let $x,y\in I_2$ be distinct integers and assume that $a_x\le a_y$. If $a_x=a_y$, then $(a_x+1)-a_y\equiv 1 \pmod 3$, and thus there is an edge from $v_{a_x+1, x}$ to $v_{a_y, y}$ and similarly, there is an edge from $v_{a_y+1, y}$ to $v_{a_x, x}$.
    Assume that $a_x<a_y$. Then $a_y-(a_x+1)\equiv 2 \pmod 3$ and $(a_y+1)-a_x\equiv 1 \pmod 3$, and thus there is an edge from $v_{a_x+1,x}$ to $v_{a_y, y}$ and there is an edge from $v_{a_y+1, y}$ to $v_{a_x, x}$.
    This shows that there are no edges between $\{v_{a_x, x}, v_{a_x+1, x}\}$ and $\{v_{a_y, y}, v_{a_y+1, y}\}$ in $G[A\to B]$, and therefore  $\{(v_{a_i,i}, v_{a_i+1, i}):i\in I_2\}$ is an induced matching in $G[A\to B]$ of size at least $n/6=3k$.
    This contradicts the assumption that $\bimimval_G(A)\le k-1$.
    
    \medskip
    (Case 2. For some $j\in [n]$, $R_j$ is fully contained in one of $A$ and $B$.) Without loss of generality, we assume that $R_j$ is contained in $A$. Since $\abs{B}>\abs{V(G)}/3$, there is a subset $I_1\subseteq [n]$ such that $\abs{I_1}\ge n/3$, and for each $i\in I_1$, $a_{i, b_i}\in B$ for some $b_i\in [n]$.
    We take a subset $I_2$ of $I_1$ of size at least $n/9$ where all integers in $I_2$ are pairwise congruent modulo 3.
    Lastly, we take a subset $I_3$ of $I_2$ of size at least $n/18$ such that either
    \begin{itemize}
        \item for all $i\in I_3$, $b_i>j$, or
        \item for all $i\in I_3$, $b_i<j$.
    \end{itemize}
    
    First, we assume that $b_i>j$
    for all $i\in I_3$.
    We verify that $\{(v_{i,j}, v_{i, b_i}):i\in I_3\}$ is an induced matching in $G[A\to B]$. Let $x, y\in I_3$ with $x<y$. As $x$ and $y$ are congruent modulo 3, there is an edge from $v_{x, b_{x}}$ to $v_{y, j}$ and there is an edge from $v_{y, b_{y}}$ to $v_{x, j}$. It shows that there are no edges between $\{v_{x, j}, v_{x, b_x}\}$ and $\{v_{y, j}, v_{y, b_y}\}$ in $G[A\to B]$, and therefore  $\{(v_{x,j}, v_{x, b_x}):x\in I_3\}$ is an induced matching in $G[A\to B]$ of size at least $n/18=k$.
    It contradicts the assumption that $\bimimval(A)\le k-1$.
    The argument when $b_i<j$ for all $i\in I_3$ is similar.
 \end{proof} 

% Now, we assume that there exists a row Rsuch that Ris fully contained in one of Aand B. Without loss of generality, we may assume that Ris contained in A. Since |B| >|V(G)|3, there is a subset X⊆{1, ..., q}such that |X| >q3and for each i ∈X, the i-th column contains a vertex of B. For each i-th column where i ∈X, we choose a vertex vai,iin B. It is not hard to verify that the edges between {vai,i:i ∈X}and the rows in Rform an induced matching of size q3in G[A, B].
% Therefore, we have d ≥min(p4, q3).

We argue that directed tree-width~\cite{JohnsonRST2001} and bi-mim-width are incomparable. 

\begin{lemma}\label{lem:directedtw}
Directed tree-width and bi-mim-width are incomparable.
\end{lemma}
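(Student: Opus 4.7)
To prove incomparability, I would exhibit two classes of digraphs, one separating each direction: a class with bounded directed tree-width but unbounded bi-mim-width, and another with bounded bi-mim-width but unbounded directed tree-width.

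For the first direction, I would take the class of directed acyclic graphs. Every DAG has directed tree-width $0$, which is essentially the defining property in the original framework of Johnson, Robertson, Seymour, and Thomas~\cite{JohnsonRST2001}. On the bi-mim-width side, Theorem~\ref{thm:bddbimim}(vi) already records that acyclic orientations of $n\times n$ grids have unbounded bi-mim-width: combine Lemma~\ref{lem:underlying} with the fact that grids have unbounded mim-width~\cite{VatshelleThesis}. Hence acyclic orientations of grids witness a class of directed tree-width $0$ with unbounded bi-mim-width.

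For the other direction, I would use biorientations of complete graphs. By Lemma~\ref{lem:biorientation} together with the observation that $\mimw(K_n)=1$ for $n\ge 2$ (every cut $(A,\bar{A})$ of $K_n$ induces a complete bipartite graph $K_n[A,\bar{A}]$, and any two edges in such a graph share a vertex or are joined by a crossing edge, so no induced matching of size~$2$ exists), we have $\bimimw(\mathrm{biorient}(K_n))=2$. To show that the directed tree-width is unbounded on this class, I would use the haven characterization from~\cite{JohnsonRST2001}: defining $\beta(Z)\defeq V(K_n)\setminus Z$ for every $Z\subsetneq V(K_n)$ of size less than $n$ yields a monotone haven of order~$n$, since $\mathrm{biorient}(K_n)-Z$ is itself a biorientation of a smaller complete graph and hence strongly connected, and $Z\subseteq Z'$ trivially gives $\beta(Z')\subseteq \beta(Z)$. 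The haven/directed tree-width correspondence then forces the directed tree-width of $\mathrm{biorient}(K_n)$ to grow linearly with~$n$.

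The main obstacle I anticipate is the careful bookkeeping around the haven characterization of directed tree-width, which is notoriously delicate in the original formulation; in particular, I would need to verify that a monotone haven of order~$n$ indeed yields a lower bound on the directed tree-width that tends to infinity. An alternative, equivalent route is to invoke brambles: the singletons $\{v\}$ for $v\in V(K_n)$ form a bramble in $\mathrm{biorient}(K_n)$ because each is strongly connected and every pair touches via the bidirected edges of the biorientation, yielding a bramble of order~$n$ and the desired unboundedness.
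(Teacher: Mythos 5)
Your proposal is correct. The first direction is exactly the paper's: acyclic orientations of grids have bounded directed tree-width but unbounded bi-mim-width via Lemma~\ref{lem:underlying} (the paper states directed tree-width $1$ rather than $0$, a harmless convention difference). For the second direction you take a genuinely different route. The paper uses the class of digraphs obtained from a directed cycle by blowing up a vertex into an independent set with identical in- and out-neighborhoods; it bounds their bi-mim-width by $4$ and argues unbounded directed tree-width by exhibiting subdivisions of cylindrical grids as subgraphs. You instead use biorientations of complete graphs: Lemma~\ref{lem:biorientation} plus $\mimw(K_n)=1$ gives bi-mim-width exactly $2$, and the haven $\beta(Z)=V\setminus Z$ is of order $n$ because every vertex-deleted subdigraph of a complete biorientation remains strongly connected, so the JRST haven--directed-tree-width inequality forces directed tree-width at least $n-1$. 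Your witness class buys a sharper bi-mim-width constant and a lower-bound argument that is essentially trivial to verify, whereas the paper's blown-up cycles demonstrate the separation with digraphs that are not bidirected, i.e., whose large directed tree-width does not simply reflect large undirected tree-width of a bidirected structure. One small caution: your closing bramble remark should be treated only as a heuristic, since brambles for digraphs are not formalized in~\cite{JohnsonRST2001}; the haven argument is the one to keep, and it is complete as stated.
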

\begin{proof}
First, the class of all acyclic orientations of undirected grids has directed tree-width~$1$ but has unbounded bi-mim-width, by Lemma~\ref{lem:underlying}. Second, the class of all possible digraphs obtained from a directed cycle by replacing a vertex with a set of vertices with same in-neighbors and out-neighbors that are not adjacent to each other, has bi-mim-width at most~$4$, but has unbounded directed tree-width.  To see that such a digraph has large directed tree-width, one can create a subdivision of a cylindrical grid as a subgraph which has large directed tree-width~\cite{JohnsonRST2001}.
\end{proof}

We compare the bi-mim-width with the bi-rank-width of a digraph, introduced by Kant\'e~\cite{Kante2007}. Kant\'e and Rao~\cite{KanteR2013} later generalized this notion to edge-colored graphs.    For a digraph $G$ and $A\subseteq V(G)$,
\begin{itemize}
    \item let $\cutrk^+_G(A):=\rank(M_G[A\to \bar{A}])$ and $\cutrk^-_G(A):=\rank(M_G[\bar{A}\to A])$ and
    \item  let $\bicutrk_G(A):=\cutrk^+_G(A)+\cutrk^-_G(A)$,
\end{itemize}
where the rank of a matrix is computed over the binary field.

\begin{definition}[Bi-rank-width]
  Let $G$ be a digraph and let $(\decT, \decf)$ be a branch decomposition of $G$. 
    For each edge $e \in E(\decT)$, let $\decT_A$ and $\decT_B$ be the two connected components of $\decT - e$.
    Let $(A_e, B_e)$ be the cut of $G$ where
    $A_e$ is the set of vertices that $\decf$ maps to the leaves in $\decT_A$ and
    $B_e$ is the set of vertices that $\decf$ maps to the leaves in $\decT_B$. 
    The \emph{bi-rank-width} of $(\decT, \decf)$ is
    $\birw(\decT, \decf) \defeq \max_{e \in E(\decT)}
        \bicutrk_G(A_e)$.
    The \emph{bi-rank-width} of $G$, denoted by $\birw(G)$, is the minimum bi-rank-width of any branch decomposition of $G$.
    The \emph{linear bi-rank-width} of $G$, denoted by $\linbirw(G)$, is the minimum bi-rank-width of any linear branch decomposition of $G$.
\end{definition}

% \begin{lemma}\label{lem:birankwidth}
% For every digraph $G$, $\bimimw(G)\le \birw(G)$.
% \end{lemma}
% \begin{proof}
%     It is enough to show that for every ordered vertex partition $(A, B)$ of $G$, we have  $\mim(G[A\to B])\le \rank (M_G[A\to B])$. Assume that $G[A\to B]$ has an induced matching $\{a_ib_i:i\in [t]\}$ for some $t$. Then $M_G[\{a_i:i\in [t]\} \to \{b_i:i\in [t]\}]$ has rank exactly $t$, which shows that $\rank (M_G[A\to B])\ge t$.
%     \end{proof}

We can verify that for every digraph $G$, $\bimimw(G)\le \birw(G)$. 
Interestingly, we can further show that for every positive integer $r$, the bi-mim-width of the $r$-th power of $G$ is at most the bi-rank-width of $G$. This does not depend on the value of $r$.

% Let $\{a_ib_i:i\in [2^t]\}$ be an induced matching of $G^r[A\to B]$ with $\{a_i:i\in [2^t]\}\subseteq A$. It means that for each  $i\in [2^t]$, there is a directed path $P_i$ of length at most $d$ from $a_i$ to $b_i$ in $G$. We choose an edge $c_id_i$ in each $P_i$ where $c_i\in A$ and $d_i\in B$. As $\rank (M_G[A\to B])=t$, there are at most $2^{t}-1$ twin classes of $A$ in $G[A\to B]$ having at least one neighbor in $B$. So, there exist $i_1, i_2\in [2^t]$ such that $c_{i_1}$ and $c_{i_2}$ have the same neighborhood on $B$, and in particular, $c_{i_1}d_{i_2}, c_{i_2}d_{i_1}\in E(G)$. Then either there is a directed path of length at most $d$ from $a_{i_1}$ to $b_{i_2}$, or there is a directed path of length at most $d$ from $a_{i_2}$ to $b_{i_1}$, a contradiction. 

\begin{lemma}\label{lem:powerbirw}
Let $r$ and $w$ be positive integers. If $(\decT, \decf)$ is a branch-decomposition of a digraph $G$ of bi-rank-width $w$, then it is a branch-decomposition of $G^r$ of bi-mim-width at most $w$.
\end{lemma}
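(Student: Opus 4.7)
My plan is to reduce the lemma to a cut-wise inequality and then bound the two directed induced matchings separately.

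First I would note that, since both $\bimimw$ and $\bicutrk$ are defined as maxima over the cuts arising from edges of $\decT$, it suffices to show that for every cut $(A, \bar A)$ of $V(G) = V(G^r)$,
\[
\bimimval_{G^r}(A) \le \bicutrk_G(A).
\]
Fix such a cut and set $w^\pm = \cutrk^\pm_G(A)$. The plan is to prove $\mimval^+_{G^r}(A) \le w^+$; a symmetric argument then gives $\mimval^-_{G^r}(A) \le w^-$, and summing yields the required inequality.

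For the $A \to \bar A$ direction I would take a maximum induced matching $M^+ = \{(a_i, b_i) : i \in [k^+]\}$ of $G^r[A \to \bar A]$. For each $i$, fix a shortest walk $W_i$ from $a_i$ to $b_i$ in $G$, which necessarily has length at most $r$, and let $(p_i, q_i) \in E(G[A \to \bar A])$ be the last $A \to \bar A$ crossing of $W_i$. By the choice of last crossing, $W_i$ decomposes as $a_i \leadsto p_i \to q_i \leadsto b_i$ with the suffix $q_i \leadsto b_i$ confined to $\bar A$. Writing $\ell_i^+$ and $\ell_i^-$ for the prefix and suffix lengths, we have $\ell_i^+ + 1 + \ell_i^- \le r$. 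I would then reindex the matching so that $\ell_1^+ \le \cdots \le \ell_{k^+}^+$.

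The key claim---which I expect to be the heart of the argument---is that the characteristic vectors $\chi_{N^+_G(p_i) \cap \bar A} \in \mathrm{GF}(2)^{\bar A}$ for $i \in [k^+]$ are linearly independent. Since each such vector is a column of $M_G[A \to \bar A]$, whose column rank is $w^+$, the claim immediately gives $k^+ \le w^+$.

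To prove the claim I would argue by contradiction. A nontrivial $\mathrm{GF}(2)$-dependence on some nonempty $I \subseteq [k^+]$ means that every vertex of $\bar A$ lies in an even number of the sets $N^+_G(p_i) \cap \bar A$ for $i \in I$. Taking $i_0 = \max I$, the vertex $q_{i_0} \in N^+_G(p_{i_0}) \cap \bar A$ must then also lie in $N^+_G(p_j) \cap \bar A$ for some $j \in I$ with $j < i_0$. Splicing the prefix $a_j \leadsto p_j$ of $W_j$, the edge $(p_j, q_{i_0})$, and the suffix $q_{i_0} \leadsto b_{i_0}$ of $W_{i_0}$ produces a walk in $G$ from $a_j$ to $b_{i_0}$ of length
\[
\ell_j^+ + 1 + \ell_{i_0}^- \le \ell_{i_0}^+ + 1 + \ell_{i_0}^- \le r,
\]
where the first inequality uses $\ell_j^+ \le \ell_{i_0}^+$ from the ordering. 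This yields an edge of $G^r[A \to \bar A]$ from $a_j$ to $b_{i_0}$ with $j \ne i_0$, contradicting the induced matching property of $M^+$.

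The step I expect to need the most care is the combination of choices---shortest walks, the \emph{last} $A \to \bar A$ crossing as the witness, and reindexing by ascending $\ell_i^+$---which together give the splicing argument just enough length budget to force the contradiction. Once these choices are in place, the rest of the proof is routine.
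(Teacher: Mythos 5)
Your proof is correct, and its skeleton matches the paper's: both reduce to the cut-wise inequality $\nu(G^r[A\to\bar A])\le\rank(M_G[A\to\bar A])$, both pick a crossing edge on a length-at-most-$r$ witness path for each matching edge, and both derive the contradiction by splicing a prefix of one witness onto a suffix of another via an edge supplied by a $\mathrm{GF}(2)$-dependence among the corresponding columns. Where you differ is in how the dependence is exploited. The paper restricts to the $(t{+}1)\times(t{+}1)$ submatrix on the chosen crossing endpoints, takes a \emph{minimal} dependent set $I$, shows each $c_x$ with $x\in I$ must have an out-neighbour among the other $d_i$'s, extracts a cyclic sequence from this, and closes by observing that the prefix lengths cannot strictly decrease all the way around a cycle. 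You instead work with the full columns of $M_G[A\to\bar A]$, pre-sort the matching by prefix length, take an arbitrary dependent set, and read off the parity of the single coordinate $q_{i_0}$ for $i_0=\max I$ to produce a partner $j<i_0$ with $\ell_j^+\le\ell_{i_0}^+$ directly. This avoids both the minimality argument and the cycle extraction, and is arguably cleaner; the one ingredient you carry that neither proof actually needs is the insistence on the \emph{last} $A\to\bar A$ crossing --- any crossing works, since the spliced walk's length budget does not depend on the suffix staying inside $\bar A$.
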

\begin{proof}
It is sufficient to prove that for every ordered vertex partition $(A, B)$ of $G$, we have $\nu(G^r[A\to B])\le \rank (M_G[A\to B])$. Assume $\rank (M_G[A\to B])=t$ and suppose for contradiction that $\nu(G^r[A\to B])\ge t+1$.

Let $\{(a_i, b_i):i\in [t+1]\}$ be an induced matching of $G^r[A\to B]$ with $\{a_i:i\in [t+1]\}\subseteq A$. It means that for each  $i\in [t+1]$, there is a directed path $P_i$ of length at most $r$ from $a_i$ to $b_i$ in $G$ such that the paths in $\{P_i:i\in [t+1]\}$ are pairwise vertex-disjoint. We choose an edge $(c_i, d_i)$ in each $P_i$ where $c_i\in A$ and $d_i\in B$. As 
$\rank (M_G[\{c_i:i\in [t+1]\}\to \{d_i:i\in [t+1]\}])\le \rank (M_G[A\to B])=t$,
the matrix $M_G[\{c_i:i\in [t+1]\}\to \{d_i:i\in [t+1]\}]$ is linearly dependent. In particular, there is a non-empty subset $I$ of $[t+1]$ where the sum of $M_G[\{c_j\}\to \{d_i:i\in I\}]$ over $j\in I$ becomes a zero vector (as we are working on the binary field). We choose such a subset with minimum~$\abs{I}$. Without loss of generality, we may assume that $I=[q]$ for some $q\ge 2$. 

Now, we claim that for each $x\in [q]$, $c_x$ has an out-neighbor in $\{d_i:i\in [q]\setminus \{x\}\}$. Suppose that this is not true, that is, there is $x\in [q]$ where $c_x$ has no out-neighbor in $\{d_i:i\in [q]\setminus \{x\}\}$. This means that the row of $M_G[\{c_i:i\in [q]\}\to \{d_i:i\in [q]\}]$ indexed by $c_x$ does not affect on the sum on columns indexed by $\{d_i:i\in [q]\setminus \{x\}\}$. Therefore, the sum of $M_G[\{c_j\}\to \{d_i:i\in [q]\setminus \{x\}\}]$ over $j\in [q]\setminus \{x\}$ must be a zero vector. This contradicts the minimality of $I$.

We deduce that there exists a sequence $(i_1, i_2, \ldots, i_y)$ of at least two distinct elements in~$[q]$ with $i_{y+1}=i_1$ such that for each $j\in [y]$, there exists an edge from $c_{i_j}$ to $d_{i_{j+1}}$. For each $j\in [q]$, let $\ell_{i_j}$ be the length of the subpath of $P_{i_j}$ from $a_{i_j}$ to $c_{i_j}$. Observe that if $\ell_{i_j}\le \ell_{i_{j+1}}$, then there is a directed path of length at most $r$ from $a_{i_j}$ to $b_{i_{j+1}}$ by using the sub-path $a_{i_j}$ to $c_{i_j}$, then the edge to $d_{i_{j+1}}$ and then the sub-path to $b_{i_{j+1}}$, which contradicts the assumption that there is no edge from $a_{i_j}$ to $b_{i_{j+1}}$ in $G^r$. On the other hand, because of the cycle structure it is not possible that for all $j\in [x]$, $\ell_{i_j}>\ell_{i_{j+1}}$. So, we have a contradiction. 
 
\end{proof}

Note that the same argument holds for undirected graphs; if an undirected graph $G$ has rank-width $w$, then any power of $G$ has mim-width at most $w$. This extends the two arguments in~\cite{JaffkeKST2019} that any power of an undirected graph of tree-width $w-1$ has mim-width at most $w$, and any power of an undirected graph of clique-width $w$ has mim-width at most $w$, because such graphs have rank-width at most $w$~\cite{Oum05, Oum2008}.
 
 Next, we show that the $r$-th power of a digraph of bi-mim-width $w$ has bi-mim-width at most $rw$.  
 This will be used to prove Theorem~\ref{thm:xpalgorithmdistance}.
 
\begin{lemma}\label{lem:powerbimim}
Let $r$ and $w$ be positive integers. If $(\decT, \decf)$ is branch-decomposition of a digraph $G$ of bi-mim-width $w$, then it is a branch-decomposition of $G^r$ has bi-mim-width at most $rw$.
\end{lemma}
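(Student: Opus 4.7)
The plan is to reduce the statement to a per-cut bound of the form $\bimimval_{G^r}(A) \le r \cdot \bimimval_G(A)$ for every $A \subseteq V(G)$. Applied to the cut $(A_e, B_e)$ for every edge $e$ of $\decT$, this immediately gives that $(\decT, \decf)$ has bi-mim-width at most $rw$ as a branch decomposition of $G^r$. Since $\bimimval = \mimval^+ + \mimval^-$, it suffices to prove $\nu(G^r[A \to B]) \le r \cdot \nu(G[A \to B])$ for every ordered bipartition $(A, B)$ of $V(G)$; the symmetric inequality $\nu(G^r[B \to A]) \le r \cdot \nu(G[B \to A])$ will follow by exchanging the roles of $A$ and $B$.

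To prove this inequality, I would take an induced matching $\{(a_i, b_i) : i \in [m]\}$ of $G^r[A \to B]$ and, for each $i$, fix a shortest directed $a_i$-$b_i$ path $P_i = (v_0^i, \ldots, v_{\ell_i}^i)$ in $G$, noting that $\ell_i \le r$. Since $v_0^i = a_i \in A$ and $v_{\ell_i}^i = b_i \in B$, the path $P_i$ must cross from $A$ to $B$ at some point; let $j_i$ be the smallest index with $v_{j_i}^i \in A$ and $v_{j_i + 1}^i \in B$, and observe that by minimality the prefix $v_0^i, \ldots, v_{j_i}^i$ lies entirely in $A$. Set $c_i \defeq v_{j_i}^i$ and $d_i \defeq v_{j_i + 1}^i$, so that $(c_i, d_i) \in E(G[A \to B])$. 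Since $j_i \in \{0, 1, \ldots, r - 1\}$, pigeonhole gives a value $j^*$ and an index set $I \subseteq [m]$ with $|I| \ge m/r$ such that $j_i = j^*$ for all $i \in I$.

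The remaining step is to show that $\{(c_i, d_i) : i \in I\}$ is an induced matching in $G[A \to B]$, which will then force $\nu(G[A \to B]) \ge |I| \ge m/r$. The key observation is that every possible obstruction --- namely $c_i = c_{i'}$, or $d_i = d_{i'}$, or an edge $(c_i, d_{i'}) \in E(G)$ for distinct $i, i' \in I$ --- allows one to concatenate the $A$-prefix of one path (of length $j^*$), a single $A \to B$ step, and the $B$-suffix of another path (of length $\ell - j^* - 1$ for the appropriate $\ell \le r$), producing a directed path of length at most $r$ from $a_{i'}$ to $b_i$ (or from $a_i$ to $b_{i'}$) in $G$, and hence an edge of $G^r[A \to B]$ meeting two distinct edges of the original induced matching --- a contradiction.

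The step I expect to require the most care is this concatenation argument, as it is precisely where the common crossing index $j^*$ produced by pigeonhole is essential: distinct positions $j_i \ne j_{i'}$ could make the glued path strictly longer than $r$ and break the contradiction. Once all three obstruction cases are ruled out in this uniform way, the bound $m \le r \cdot \nu(G[A \to B])$ follows, and combining it with its symmetric counterpart and summing over the two orientations of each cut completes the proof.
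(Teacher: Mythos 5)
Your proposal is correct and follows essentially the same route as the paper's proof: choose for each matched pair a short directed path, select an $A\to B$ crossing edge on it, apply the pigeonhole principle to the length of the prefix before that crossing, and derive a contradiction by concatenating a prefix, the offending edge, and a suffix into a path of length at most $r$. The only cosmetic differences are that you fix the \emph{first} crossing (the paper takes an arbitrary one) and that you spell out the shared-endpoint obstructions separately, which the paper subsumes into the single case of an edge from $c_{i_1}$ to $d_{i_2}$.
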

\begin{proof}
It is sufficient to prove that for every ordered vertex partition $(A, B)$ of $G$, we have $\nu(G^r[A\to B])\le r\nu (G[A\to B])$. Assume $\nu (G[A\to B])=t$ and suppose for contradiction that $\nu(G^r[A\to B])\ge rt+1$.    

Let $\{(a_i, b_i):i\in [rt+1]\}$ be an induced matching of $G^r[A\to B]$ with $\{a_i:i\in [rt+1]\}\subseteq A$. For each  $i\in [rt+1]$, let $P_i$ be a directed path of length at most $r$ from $a_i$ to $b_i$ in $G$. We choose an edge $(c_i, d_i)$ in each $P_i$ where $c_i\in A$ and $d_i\in B$. For each $i\in [rt+1]$, let $\ell_i$ be the length of the subpath of $P_i$ from $a_i$ to $c_i$. Observe that $0\le \ell_i\le r-1$.

By the pigeonhole principle, there exists a subset $I$ of $[rt+1]$ of size at least $t+1$ such that for all $i_1, i_2\in I$, $\ell_{i_1}=\ell_{i_2}$. Since $\nu (G[A\to B])=t$, there exist distinct integers $i_1, i_2\in I$ such that 
there is an edge from $c_{i_1}$ to $d_{i_2}$. Then there is a path of length at most $d$ from $a_{i_1}$ to $b_{i_2}$, contradicting the assumption that there is no edge from $a_{i_1}$ to $b_{i_2}$ in $G^r$. 
\end{proof}

\section{Classes of digraphs of bounded bi-mim-width}\label{sec:classes}

In this section, 
we present several digraph classes of bounded bi-mim-width, which are reflexive $H$-graphs (Proposition~\ref{prop:reflexiveH}), adjusted permutation digraphs (Proposition~\ref{prop:adjustedpermu}), 
adjusted rooted directed path digraphs (Proposition~\ref{prop:reflexivecontainmentH}), and nice $H$-convex graphs (Proposition~\ref{prop:reflexiveconvexH}).

We recall that 
a digraph $G$ is an \emph{intersection digraph} 
if there exists a family $\{(S_v, T_v):v\in V(G)\}$ of ordered pairs of sets, called a \emph{representation}, such that there is an edge from $v$ to $w$ in $G$ if and only if $S_v$ intersects $T_w$.

\subsection{$H$-digraphs and interval digraphs}

We define $H$-digraphs, which generalize interval digraphs.

\begin{definition}[$H$-digraph]
    Let $H$ be an undirected graph. 
    A digraph $G$ is an \emph{$H$-digraph} if there is a subdivision of $H$ and a family
    $\{(S_v, T_v):v\in V(G)\}$ of ordered pairs of connected subgraphs of $F$ such that $G$ is the intersection digraph with representation $\{(S_v, T_v):v\in V(G)\}$.
\end{definition}

\begin{definition}[Interval digraph]
    A digraph $G$ is an interval digraph if it is a $P_2$-digraph.
\end{definition}

We show that for fixed $H$, reflexive $H$-digraphs have bounded linear bi-mim-width.

\begin{proposition}\label{prop:reflexiveH}
Let $H$ be an undirected graph. Given a representation of a reflexive $H$-digraph $G$, one can construct in polynomial time a linear branch decomposition of $G$ of bi-mim-width at most $12\abs{E(H)}$.
\end{proposition}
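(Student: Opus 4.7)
The plan is to adapt the linear ordering of Fomin, Golovach, and Raymond for $H$-graphs to the reflexive digraph setting. The reflexivity assumption $S_v \cap T_v \neq \emptyset$ together with connectedness of $S_v$ and $T_v$ guarantees that $C_v \defeq S_v \cup T_v$ is a single connected subgraph of the given subdivision $F$ of $H$. This lets us associate to every $v$ one connected ``undirected'' representative, as in the undirected $H$-graph setting, while the directed information remains encoded in the partition $C_v = S_v \cup T_v$.

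First I would construct a linear ordering of $V(G)$. Fix a DFS traversal of $H$, yielding a linear order on $E(H)$, and orient each subdivision path $P_e \subseteq F$ consistently with this traversal. For each $v \in V(G)$, let $\mu(v) \in E(H)$ be the first edge in the order whose subdivision path $P_{\mu(v)}$ is met by $C_v$, and let $\pi(v)$ be the position at which $C_v$ first enters $P_{\mu(v)}$ along the chosen orientation. Order $V(G)$ lexicographically by $(\mu(v), \pi(v))$; this gives a linear branch decomposition $(\decT, \decf)$, constructible in polynomial time from the representation.

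Next, for any cut $(A, \bar A)$ induced by $(\decT, \decf)$, where $A$ is a prefix of the ordering, I would separately bound $\mimval^+_G(A)$ and $\mimval^-_G(A)$ by $6\abs{E(H)}$ each, totalling $\bimimval_G(A) \le 12\abs{E(H)}$. By symmetry it suffices to handle $\mimval^+_G$: for a maximum induced matching $M^+$ in $G[A \to \bar A]$ and each $(u, v) \in M^+$, the witness intersection $S_u \cap T_v$ is nonempty and must meet some subdivision path $P_e$; charge $(u, v)$ to this $e$. The task reduces to showing that each $e \in E(H)$ receives at most $6$ charges.

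For this per-edge bound I would use three structural facts: (i) for any $v$, both $S_v \cap P_e$ and $T_v \cap P_e$ are subpaths of $P_e$, because interior vertices of $P_e$ have degree $2$ in $F$, so a connected subgraph meeting the interior must do so in a contiguous segment; (ii) for distinct pairs $(u_i, v_i), (u_j, v_j) \in M^+$ the induced-matching condition gives $S_{u_i} \cap T_{v_j} = \emptyset$ throughout $F$, hence in particular on $P_e$; and (iii) since $A$ is a prefix of the ordering, the first-entry positions $\pi$ of the $u_i$ and $v_j$ on $P_e$ are tightly constrained according to whether their $\mu$-values precede, equal, or follow $e$. A case analysis on which endpoint of $P_e$ each subpath $S_{u_i}\cap P_e$ and $T_{v_i}\cap P_e$ approaches then shows that $7$ matched pairs charged to the same $P_e$ always force some $S_{u_i}\cap T_{v_j} \neq \emptyset$ with $i \neq j$, contradicting the induced-matching property. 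I expect this per-edge analysis to be the main technical obstacle: unlike the undirected Fomin--Golovach--Raymond bound of $2$ per $P_e$, we must independently track the placements of $S_u$ and $T_v$ on $P_e$ and rely only on the one-sided induced-matching constraint, which loosens the geometric restrictions and accounts for the worst-case constant $6$ per direction, giving $12\abs{E(H)}$ in total.
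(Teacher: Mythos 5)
Your high-level plan (order the vertices along $F$, split each subdivision path into $O(1)$ pieces per cut, and bound the number of matched pairs per piece) matches the paper's, but the specific linear order you choose does not work, and the failure is not repairable by a finer case analysis in the per-edge step. You anchor each vertex $v$ at the first point where $C_v = S_v\cup T_v$ meets $F$; that point need not belong to $S_v$, so knowing that $u$ lies in the prefix $A$ says nothing about where $S_u$ sits on $P_e$. Concretely, take $H=P_2$, identify the vertices of the single subdivision path with $\{-1,0,1,\dots,2k\}$, and for $i\in[k]$ set $S_{u_i}=\{2i\}$, $T_{u_i}=\{-1,\dots,2i\}$, and $S_{v_i}=T_{v_i}=\{2i\}$. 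Every vertex is reflexive, $\pi(u_i)=-1$ and $\pi(v_i)=2i$, so one prefix cut of your order is exactly $(A,\bar A)=(\{u_1,\dots,u_k\},\{v_1,\dots,v_k\})$; but $S_{u_i}\cap T_{v_j}\neq\emptyset$ if and only if $i=j$, so $G[A\to\bar A]$ is an induced matching of size $k$, all $k$ pairs are charged to the single edge of $H$, and no cross-intersection is forced. Your structural fact (iii) is where the argument silently breaks: the first-entry positions of $C_{u_i}$ and $C_{v_j}$ are indeed separated by the cut, but the witnesses $S_{u_i}\cap T_{v_j}$ live elsewhere on $P_e$, can be entirely interior to it, and are otherwise unconstrained.

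The repair --- and this is what the paper does --- is to use reflexivity not merely to make $S_v\cup T_v$ connected but to pick an anchor $\alpha_v\in S_v\cap T_v$, and to order $V(G)$ by a BFS order of these anchors in $F$ (with the $\alpha_v$ made distinct and non-branching by further subdividing). For a prefix cut $(A,\bar A)$, each branching path of $F$ then splits into at most $3$ maximal pieces, each contained in the anchor-prefix $A^*$ of $V(F)$ or in its complement, giving at most $3\abs{E(H)}$ pieces in total. For a matched pair $(x,y)$ with $x\in A$, $y\in\bar A$ whose witness lies on such a piece $P$, at least one of $\alpha_x\in S_x$ and $\alpha_y\in T_y$ lies off $P$ (since $V(P)$ is contained in one side of $(A^*,B^*)$ while $\alpha_x\in A^*$ and $\alpha_y\in B^*$); as $S_x\cup T_y$ is connected, the component of $(S_x\cup T_y)\cap P$ containing the witness must reach an endpoint of $P$. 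Three pairs on the same piece then give two witness components nested at the same endpoint, forcing a forbidden edge $(x_{j_1},y_{j_2})$ or $(x_{j_2},y_{j_1})$; hence at most $2$ pairs per piece and $6\abs{E(H)}$ per direction. It is precisely this ``every witness touches an endpoint of its piece'' property that your anchor at the first entry of $C_v$ fails to provide.
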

\begin{proof}
Let $m:=\abs{E(H)}$. We may assume that $H$ is connected. If $H$ has no edge, then it is trivial. Thus, we may assume that $m\ge 1$. 

Let $G$ be a reflexive $H$-digraph, let $F$ be a subdivision of $H$, and let $\calM := \{(\sourceset_v, \targetset_v): v\in V(G)\}$ be a given reflexive $H$-digraph representation of $G$ with underlying graph $F$. For each $v\in V(G)$, choose a vertex $\alpha_v$ in $S_v\cap T_v$. We may assume that vertices in $(\alpha_v: v\in V(G))$ are pairwise distinct and they are not branching vertices, by subdividing $F$ more and changing $\calM$ accordingly, if necessary. 

We fix a branching vertex $r$ of $F$ and obtain a BFS ordering of $F$ starting from $r$. We denote by $v<_B w$ if $v$ appears before $w$ in the BFS ordering. We give a linear ordering $L$ of $G$ such that for all $v,w\in V(G)$,
if $\alpha_v<_B \alpha_w$, then $v$ appears before $w$ in $L$. This can be done in linear time.

We claim that $L$ has width at most $12m$. We choose a vertex $v$ of $G$ arbitrarily, and let $A$ be the set of vertices in $G$ that are $v$ or a vertex appearing before $v$ in $L$, and let $B:=V(G)\setminus A$. It suffices to show $\bimimval_G(A)\le 12m$. Let $A^*$ be the set of vertices of $F$ that are $\alpha_v$ or a vertex appearing before $\alpha_v$, and let $B^*:=V(F)\setminus A^*$. Let $\mathcal{P}$ be the set of paths in $F$ such that
\begin{itemize}
    \item for every $P\in \mathcal{P}$, $P$ is a subpath of some branching path of $F$ and it is a maximal path contained in one of $A^*$ and $B^*$, 
    \item $\bigcup_{P\in \mathcal{P}}V(P)=V(F)$.
\end{itemize}
Because of the property of a BFS ordering, it is easy to see that each branching path of $F$ is partitioned into at most $3$ vertex-disjoint paths in $\mathcal{P}$. Thus, we have $\abs{\mathcal{P}}\le 3m$. Note that two paths in $\mathcal{P}$ from two distinct branching paths may share an endpoint.

We first show that $\mim_G^+(A)\le 6m$.
Suppose for contradiction that 
$G[A\to B]$ contains an induced matching $M$ of size $6m+1$. 
By the pigeonhole principle, there is a subset $M_1=\{(x_i, y_i):i\in [3]\}$ of $M$ of size $3$ and a path $P$ in $\mathcal{P}$ such that for every $(x,y)\in M_1$, $S_{x}$ and $T_{y}$ meet on $P$. 
Let $p_1, p_2$ be the endpoints of $P$. 

Observe that $V(P)\subseteq A^*$ or $V(P)\subseteq B^*$. So, for each $i\in [3]$, it is not possible that $\alpha_{x_i}$ and $\alpha_{y_i}$ are both contained in $V(P)$.
It implies that each connected component of $(S_{x_i}\cup T_{y_i})\cap P$ contains an endpoint of $P$, as $S_{x_i}\cup T_{y_i}$ is connected. Therefore, there are at least two integers $j_1, j_2\in [3]$ and a connected component $C_1$ of $(S_{x_{j_1}}\cup T_{y_{j_1}})\cap P$ and a connected component $C_2$ of $(S_{x_{j_2}}\cup T_{y_{j_2}})\cap P$ so that 
\begin{itemize}
    \item $C_1$ and $C_2$ contain the same endpoint of $P$, and 
    \item for each $i\in [2]$, $C_i$ contains a vertex of $S_{x_{j_i}}$ and a vertex of $T_{y_{j_i}}$.
\end{itemize} 
However, it implies that $(x_{j_1}, y_{j_2})$ or $(x_{j_2}, y_{j_1})$ is an edge, a contradiction.

We deduce that $\mim_G^+(A)\le 6m$. 
 By a symmetric argument, we get $\mim_G^-(A)\le 6m$.
Therefore, we have $\bimimval_G(A)\le 12m$, as required.
\end{proof}

We obtain a better bound for reflexive interval digraphs.

\begin{proposition}\label{prop:reflexiveinterval}
Given a reflexive interval digraph, one can output a linear branch decomposition of bi-mim-width at most $2$ in polynomial time. 
\end{proposition}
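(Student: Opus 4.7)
The plan is to tighten the proof of Proposition~\ref{prop:reflexiveH} in the special case $H=P_2$, using the restrictive one-dimensional geometry of intervals to bring the bound from $12$ down to $2$. I would first invoke M\"uller's recognition algorithm~\cite{Muller1997} on the input reflexive interval digraph $G$ to obtain a representation $\{(S_v,T_v):v\in V(G)\}$ by intervals on a line. Using reflexivity, pick a point $\alpha_v\in S_v\cap T_v$ for each $v$, break ties arbitrarily, and order $V(G)$ by increasing $\alpha_v$; let $L$ be the corresponding linear branch decomposition.

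The heart of the argument is to show that for every prefix cut $(A,B)$ of $L$, with a threshold $p$ satisfying $\alpha_a\le p<\alpha_b$ for all $a\in A$ and $b\in B$, we have $\mimval^+_G(A)\le 1$. Suppose otherwise, so that there is an induced matching $\{(a_1,b_1),(a_2,b_2)\}$ in $G[A\to B]$, and write $S_{a_i}=[s_i,s_i']$ and $T_{b_i}=[t_i,t_i']$. Because $s_i\le\alpha_{a_i}\le p<\alpha_{b_j}\le t_j'$ for all $i,j\in\{1,2\}$, the inequality $s_i\le t_j'$ holds automatically, so $S_{a_i}\cap T_{b_j}\neq\emptyset$ iff $t_j\le s_i'$. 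The four matching conditions thus collapse to $t_1\le s_1'$, $t_2\le s_2'$, $t_1>s_2'$, and $t_2>s_1'$; the first and third give $s_2'<s_1'$, while the second and fourth give $s_1'<s_2'$, a contradiction.

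A symmetric argument applied to $G[B\to A]$, obtained by interchanging the roles of $S$ and $T$, yields $\mimval^-_G(A)\le 1$, so $\bimimval_G(A)\le 2$ at every cut and $\bimimw(L)\le 2$; the whole procedure runs in polynomial time, dominated by M\"uller's algorithm. The only delicate point --- and the source of the improvement over the general $H$-digraph bound --- is that $\alpha_v$ lies in \emph{both} $S_v$ and $T_v$, which is exactly where reflexivity enters the argument: without it the separation $\alpha_a\le p<\alpha_b$ no longer forces $s_i\le t_j'$, matching the fact from Theorem~\ref{thm:bddbimim} that interval digraphs in general have unbounded bi-mim-width.
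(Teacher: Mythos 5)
Your proof is correct and follows essentially the same route as the paper: invoke M\"uller's algorithm for the representation, order vertices by a point $\alpha_v\in S_v\cap T_v$ (this is where reflexivity is used), and show each prefix cut satisfies $\mimval^+_G(A)\le 1$ and $\mimval^-_G(A)\le 1$. The paper dismisses the last step as ``not difficult to observe (similar to interval graphs)'' by specializing its $H$-digraph argument to two paths, whereas you supply the explicit endpoint computation; the only cosmetic quibble is that with ties among the $\alpha_v$ you should take the threshold to satisfy $\alpha_a\le p\le\alpha_b$ rather than a strict inequality, which changes nothing since your argument only needs $s_i\le t_j'$.
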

\begin{proof}
Let $G$ be a given reflexive interval digraph. By M{\"u}ller's recognition algorithm for interval digraphs~\cite{Muller1997}, one can out its representation in polynomial time.

    Now, we follow the proof of Proposition~\ref{prop:reflexiveH}. In this case, $\mathcal{P}$ consists of exactly two paths, one induced by $A^*$ and the other induced by $B^*$. Because of it, it is not difficult to observe that $\mim_G^+(A)\le 1$ and $\mim_G^-(A)\le 1$ (similar to interval graphs). Thus, it has linear bi-mim-width at most $2$.
\end{proof}

\begin{figure}
  \centering
  \begin{tikzpicture}[scale=0.5, decoration={
    markings,
    mark=at position 0.5 with {\arrow{>}}}]
  \tikzstyle{w}=[circle,draw,fill=black!50,inner sep=0pt,minimum width=3pt]
%  \tikzstyle{v}=[circle, draw, solid, fill=black, inner sep=0pt, minimum width=3pt]

 \foreach \y in {0, 1, 2, 3, 4,5}{
	\draw[postaction={decorate}] (2, \y+1)-- (2, \y);
	\draw[postaction={decorate}] (4, \y+1)--(4, \y);
	\draw[postaction={decorate}] (6, \y+1)--(6, \y);

	\draw[postaction={decorate}] (1, \y) -- (1, \y+1);
	\draw[postaction={decorate}] (3, \y)--(3, \y+1);
	\draw[postaction={decorate}] (5, \y)--(5, \y+1);
	\draw[postaction={decorate}] (7, \y)--(7, \y+1);

}
%  \foreach \y in {1,3,5}{
% 	\draw[postaction={decorate}] (2, \y)-- (2, \y+1);
% 	\draw[postaction={decorate}] (4, \y)--(4, \y+1);
% 	\draw[postaction={decorate}] (6, \y)--(6, \y+1);

% 	\draw[postaction={decorate}] (1, \y+1) -- (1, \y);
% 	\draw[postaction={decorate}] (3, \y+1)--(3, \y);
% 	\draw[postaction={decorate}] (5, \y+1)--(5, \y);
% 	\draw[postaction={decorate}] (7, \y+1)--(7, \y);
% }

 \foreach \y in {0, 1, 2, 3, 4,5, 6}{
 \foreach \x in {1,3,5}{
	\draw[postaction={decorate}] (\x, \y)-- (\x+1, \y);
}
 \foreach \x in {2,4,6}{
	\draw[postaction={decorate}] (\x+1, \y)-- (\x, \y);
}
}

\end{tikzpicture} \caption{The interval digraph $G_7$ in Proposition~\ref{prop:intunbounded}. The left bottom vertex is $v_{1,1}$ and its right-hand vertex is $v_{2,1}$ and so on. }\label{fig:interval}
\end{figure}
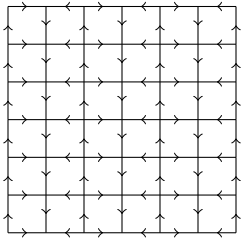

\begin{proposition}\label{prop:intunbounded}
Interval digraphs have unbounded bi-mim-width.
\end{proposition}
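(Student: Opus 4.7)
I plan to prove Proposition~\ref{prop:intunbounded} by establishing, for every $n \ge 2$, that the digraph $G_n$ depicted in Figure~\ref{fig:interval} is an interval digraph, and that $\bimimw(G_n) \to \infty$ as $n \to \infty$.

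For the interval-digraph claim, the plan is to construct a linear ordering $\pi$ of $V(G_n)$ with the property that, for every vertex $u$, the out-neighborhood $N^+(u)$ is contiguous with respect to $\pi$. From such an ordering an interval representation of $G_n$ is read off by setting $T_v \defeq \{\pi(v)\}$ (a single point) and $S_u \defeq [\min\{\pi(v) : v \in N^+(u)\}, \max\{\pi(v) : v \in N^+(u)\}]$ when $N^+(u) \ne \emptyset$, and otherwise $S_u$ equal to an isolated point outside the range of $\pi$. Since the integer points of $S_u$ are, by contiguity, exactly the $\pi$-positions of $N^+(u)$, one has $S_u \cap T_v \ne \emptyset$ iff $\pi(v) \in S_u$ iff $v \in N^+(u)$ iff $u \to v$, as required.

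I will define $\pi$ as a concatenation of ``zigzag row-blocks'' followed by two boundary blocks. For each $j \in \{1, \ldots, n-1\}$, the block for grid-row $j$ lists $v_{1, j+1}, v_{2, j}, v_{3, j+1}, v_{4, j}, \ldots$, interleaving the odd-column vertices of grid-row $j+1$ with the even-column vertices of grid-row $j$; $\pi$ then appends the even-column vertices of grid-row $n$ in column-index order, and finally the odd-column vertices of grid-row $1$ in column-index order. This interleaving places the three out-neighbors $v_{i-1, j}, v_{i, j+1}, v_{i+1, j}$ of any interior odd-column source $v_{i, j}$ at positions $i-1, i, i+1$ within the block for grid-row $j$, i.e., three consecutive positions of $\pi$. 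Boundary sources (odd-column vertices in grid-row $n$ or in the leftmost/rightmost column) have out-neighborhoods of size at most two, which the trailing blocks handle; and even-column sources have out-neighborhoods of size at most one, trivially consecutive. A routine case analysis then confirms contiguity of $N^+(u)$ for every $u$.

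For the bi-mim-width lower bound, by Lemma~\ref{lem:underlying} it suffices to show that $\mimw(H_n) \to \infty$, where $H_n$ is the underlying undirected graph of $G_n$, namely the $n \times n$ grid. Every branch decomposition of $H_n$ has an edge whose removal induces a balanced bipartition $(A, B)$ with $|A|, |B| \ge n^2/3$; the edge-isoperimetric inequality for 2D grids then guarantees $|E(H_n[A, B])| = \Omega(n)$, and since $H_n$ has maximum degree $4$ a standard greedy argument produces an induced matching of size $\Omega(n)$ inside $H_n[A, B]$, giving $\mimw(H_n) = \Omega(n)$ and hence $\bimimw(G_n) = \Omega(n) \to \infty$.

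The main obstacle will be the first step. A naive linear placement of the grid vertices makes the $S$-interval of an odd-column source inevitably span a large portion of its row and inadvertently cover the target points of non-neighbors lying between, producing spurious edges. The zigzag row-block construction of $\pi$ is designed precisely to localize every odd-column source's three-element out-neighborhood (and each boundary source's smaller out-neighborhood) to a block of consecutive positions, which avoids these spurious intersections.
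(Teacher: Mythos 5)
Your proposal is correct and follows the same overall strategy as the paper: realize an orientation of the $(n\times n)$-grid as an interval digraph, then invoke Lemma~\ref{lem:underlying} together with a lower bound on the mim-width of the underlying undirected grid. The two sub-steps are executed differently, though. For the representation, the paper writes down explicit numerical intervals $S_{v_{i,j}}$, $T_{v_{i,j}}$ and reads the adjacencies off directly, whereas you construct a linear order in which every out-neighborhood is consecutive and derive point/interval pairs from it; your zigzag blocks do place the out-neighbors $v_{i-1,j}$, $v_{i,j+1}$, $v_{i+1,j}$ of an interior odd-column source at three consecutive positions, the boundary and even-column cases check out, and the construction also rules out spurious loops since a source never occupies a position inside its own $S$-interval. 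This is a valid and arguably more transparent alternative to the paper's arithmetic. For the lower bound, the paper simply cites that the $(n\times n)$-grid has mim-width at least $n/3$ (Theorem 4.3.10 of Vatshelle's thesis), while you re-derive an $\Omega(n)$ bound from a balanced tree edge, edge-isoperimetry of the grid, and a greedy extraction of an induced matching of size $m/(2\Delta^2)$ from a cut with $m$ crossing edges; this is self-contained and sound, but only recovers the cited fact with a worse constant. Both routes establish Proposition~\ref{prop:intunbounded}.
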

\begin{proof}
We will construct some orientation of the $(n\times n)$-grid as an interval digraph.

% We denote by $\{v_{i,j}: i, j\in [n]\}$ the vertex set of an $n\times n$-grid $G$ such that for $v_{a,b}, v_{c,d}\in V(G)$, they are adjacent if and only if $\abs{a-c}+\abs{b-d}=1$.

For $i,j\in [n]$, we construct $S_{v_{i,j}}$ and $T_{v_{i,j}}$ as follows.
For every odd integer $i$, we set \begin{itemize}
    \item $S_{v_{i,j}}:=[2(n+1)j+2i-1, 2(n+1)j+2i+1]$ and,
    \item $T_{v_{i,j}}:=[2(n+1)(j-1)+2i, 2(n+1)(j-1)+2i]$,
\end{itemize} 
and for every even integer $i$, we set
\begin{itemize}
    \item $S_{v_{i,j}}:=[2(n+1)(j-1)+2i, 2(n+1)(j-1)+2i]$ and
    \item $T_{v_{i,j}}:=[2(n+1)j+2i-1, 2(n+1)(j-1)+2i+1]$.
\end{itemize}
Let $G_n$ be the intersection digraph on the vertex set $\{v_{i,j}:i,j\in [n]\}$  0with representation $\{(S_{v_{i,j}}, T_{v_{i,j}}):i,j\in [n]\}$. 
Observe that for $i,j\in [n-1]$, if $i$ is odd, then $(v_{i,j}, v_{i+1, j})$ and $(v_{i,j}, v_{i,j+1})$ are edges, and if $i$ is even, then $(v_{i+1, j}, v_{i,j})$ and $(v_{i,j+1}, v_{i,j})$ are edges. See Figure~\ref{fig:interval} for an illustration. 

This is an orientation of the $(n\times n)$-grid. It is known that the $(n\times n)$-grid has mim-width at least $n/3$; see~\cite[Theorem 4.3.10]{VatshelleThesis}. By Lemma~\ref{lem:underlying}, it has bi-mim-width at least $n/3$.
\end{proof}

\subsection{Permutation digraphs}

Permutation digraphs are directed analogues of permutation graphs.

\begin{definition}[Permutation digraph]
    A digraph $G$ is a \emph{ permutation digraph} if there is a family
    $\{(S_v, T_v):v\in V(G)\}$ of line segments whose endpoints lie on two parallel lines $\Lambda_1$ and $\Lambda_2$ where $G$ is the intersection digraph with representation $\{(S_v, T_v):v\in V(G)\}$. A permutation digraph is \emph{adjusted} if $S_v$ and $T_v$ have a common endpoint in $\Lambda_1$ for all $v\in V(G)$.
\end{definition}

We observe that interval digraphs are permutation digraphs. When we have a representation $\{(S_v, T_v):v\in V(G)\}$ for an interval digraph on a line~$\Lambda=\{x:x\in \mathbb{R}\}$, we consider two copies $\Lambda_1=\{(x,0):x\in \mathbb{R}\}$ and $\Lambda_2=\{(x,1):x\in \mathbb{R}\}$ of~$\Lambda$, and then for each $S_v=[a_v, b_v]$, we make $S_v^*$ as the line segment linking $(a_v, 0)$ and $(b_v, 1)$, and for each $T_v=[c_v,d_v]$, we make $T_v^*$ as the line segment linking $(c_v, 1)$ and $(d_v, 0)$. One can verify that $\{(S_v^*, T_v^*):v\in V(G)\}$ is a representation of the same digraph. Because of this and by Proposition~\ref{prop:intunbounded}, permutation digraphs also have unbounded bi-mim-width.

We show that adjusted permutation digraphs have linear bi-mim-width at most $4$. Note that an adjusted permutation graph is reflexive, but we were not able to show that all reflexive permutation digraphs have bounded bi-mim-width. It remains open whether there is a constant bound on (linear) bi-mim-width of reflexive permutation digraphs.

\begin{proposition}\label{prop:adjustedpermu}
Given a representation of an adjusted permutation digraph $G$, one can construct in polynomial time a linear branch decomposition of $G$ of bi-mim-width at most $4$.
\end{proposition}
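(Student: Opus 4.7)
For each $v \in V(G)$, let $\alpha_v \in \Lambda_1$ denote the common endpoint of $S_v$ and $T_v$, and let $s_v, t_v \in \Lambda_2$ denote the remaining endpoints of $S_v$ and $T_v$, respectively. Parameterizing both parallel lines by $\mathbb{R}$, an elementary coordinate computation shows that $S_v \cap T_w \ne \emptyset$ if and only if $(\alpha_v - \alpha_w)(s_v - t_w) \le 0$. The plan is to take any linear order $v_1, v_2, \ldots, v_n$ of $V(G)$ satisfying $\alpha_{v_1} \le \alpha_{v_2} \le \cdots \le \alpha_{v_n}$ (with ties broken arbitrarily) and construct the caterpillar whose leaves are assigned to $v_1, \ldots, v_n$ in this order. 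This is clearly computable in polynomial time from the representation.

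Fix an edge of the caterpillar, inducing a cut $(A, B)$ with $A$ a prefix of the linear order. The claim is that $\nu(G[A \to B]) \le 2$ and, symmetrically, $\nu(G[B \to A]) \le 2$, which yields $\bimimval_G(A) \le 4$. For the first inequality, suppose $\{(x_j, y_j) : j \in [k]\}$ is an induced matching in $G[A \to B]$ with $x_j \in A$, $y_j \in B$. By construction, $\alpha_{x_j} \le \alpha_{y_j}$; in the generic case where all relevant coordinates are distinct, the existence of the edge $x_j \to y_j$ forces $s_{x_j} \ge t_{y_j}$, while the absence of an edge $x_j \to y_l$ for $l \ne j$ forces $s_{x_j} < t_{y_l}$. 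Chaining these inequalities gives $t_{y_j} \le s_{x_j} < t_{y_l}$ for every $l \ne j$, so every $t_{y_j}$ would have to be the strict minimum of $\{t_{y_1}, \ldots, t_{y_k}\}$, which is possible only for $k \le 1$. The argument for $G[B \to A]$ is entirely analogous, using the opposite sign of the crossing inequality.

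The main delicate point that I foresee is that ties in the $\alpha$-coordinate cannot be removed by perturbation without altering the represented digraph: if $\alpha_v = \alpha_w$ for distinct $v, w$, then $S_v$ and $T_w$ automatically meet at their common endpoint on $\Lambda_1$, producing both arcs $v \to w$ and $w \to v$. Since vertices sharing an $\alpha$-coordinate are consecutive in the linear order, however, only a small bounded number of matching pairs crossing any given cut can involve such ``coincidence'' edges, and this accounts for the slack between the bound of $1$ per side coming from the clean monotonicity argument and the target bound of $2$ per side. Consequently $\bimimval_G(A) \le 4$ for every cut $(A, B)$ induced by the caterpillar, and the linear branch decomposition has bi-mim-width at most $4$, as claimed.
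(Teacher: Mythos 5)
Your setup and your ``generic case'' argument are correct, and in fact sharper than what the paper proves: with the crossing criterion $(\alpha_v-\alpha_w)(s_v-t_w)\le 0$ and a prefix cut $(A,B)$ of the $\alpha$-sorted order, the chain $t_{y_j}\le s_{x_j}<t_{y_l}$ does show that $\nu(G[A\to B])\le 1$ whenever all the $\alpha$-values involved are distinct. (The paper instead assumes a size-$3$ induced matching and kills it by a direct case analysis on the pair whose $\alpha$-coordinate is closest to $\alpha_{v_3}$, without ever writing down the sign condition; your route is cleaner in the tie-free case.) You have also correctly located the only delicate point: coincidences $\alpha_v=\alpha_w$ force bidirected edges and cannot be perturbed away.

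The gap is that the tie-handling is asserted, not proved. ``Only a small bounded number of matching pairs \ldots can involve such coincidence edges'' is exactly the statement you need, and the justification you offer (that vertices with equal $\alpha$ are consecutive in the order) does not by itself bound anything: many vertices can share the boundary $\alpha$-value and be split across the cut, giving many coincidence edges of $G[A\to B]$. Moreover, ``a small bounded number'' plus ``$1$ per side'' does not yield ``$2$ per side'' unless that number is exactly $1$. Two concrete claims close the gap. First, an induced matching in $G[A\to B]$ contains at most one coincidence pair: if $\alpha_{x_1}=\alpha_{y_1}=c_1$ and $\alpha_{x_2}=\alpha_{y_2}=c_2$ with $x_i\in A$, $y_i\in B$, then since $A$ is a prefix we get $c_1\le c_2$ and $c_2\le c_1$, so $c_1=c_2$ and $S_{x_1}$, $T_{y_2}$ share the point $(c_1,0)$, creating a forbidden cross edge. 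Second, for any two distinct pairs $j\ne l$ of the induced matching one must have $\alpha_{x_j}<\alpha_{y_l}$ strictly (equality would again force the cross edge $x_j\to y_l$), so your monotonicity chain applies verbatim to every non-coincidence pair and shows there is at most one of those as well. Together this gives $\nu(G[A\to B])\le 2$, and symmetrically $\nu(G[B\to A])\le 2$, i.e., the claimed bound $4$. With these two observations added, your proof is complete and a legitimate alternative to the paper's.
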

\begin{proof}
Let $\Lambda_1:=\{(x,0):x\in \mathbb{R}\}$ and $\Lambda_2:=\{(x,1):x\in \mathbb{R}\}$ be two lines.
Let $G$ be a given adjusted permutation digraph with its representation $\{(S_v, T_v):v\in V(G)\}$ where $S_v$ and $T_v$ are line segments whose endpoints lie on $\Lambda_1$ and $\Lambda_2$ and they have a common endpoint in $\Lambda_1$, say $(\alpha_v, 0)$. 
For each $v\in V(G)$, let $(\beta_v, 1)$ be the endpoint of $S_v$ in $\Lambda_2$ and $(\gamma_v, 1)$ be the endpoint of $T_v$ in $\Lambda_2$.
 
We give a linear ordering $L$ of $G$ such that for all $v,w\in V(G)$,
if $\alpha_v< \alpha_w$, then $v$ appears before $w$ in $L$. This can be done in linear time.

We claim that $L$ has bi-mim-width at most $4$. We choose a vertex $v$ of $G$ arbitrarily, and let $A$ be the set of vertices in $G$ that are $v$ or a vertex appearing before $v$ in $L$, and let $B:=V(G)\setminus A$. 

We verify that $\mim_G^+(A)\le 2$. Suppose for contradiction that $G[A\to B]$ has an induced matching $\{(v_i,w_i):i\in [3]\}$ with $v_1, v_2, v_3\in A$. Without loss of generality, we assume that $\alpha_{v_1}\le \alpha_{v_2}\le \alpha_{v_3}$. Observe that $\alpha_{w_1}, \alpha_{w_2}>\alpha_{v_3}$ and $\alpha_{w_3}\ge \alpha_{v_3}$.
Let $w\in \{w_i:i\in [3]\}$ such that $\abs{\alpha_w-\alpha_{v_3}}$ is minimum.
We distinguish the case depending on whether $w=v_3$ or not.

\medskip
(Case 1. $\alpha_w\neq \alpha_{v_3}$.) First assume that $w=w_3$. As $\{(v_i, w_i):i\in [3]\}$ is an induced matching, we have $\beta_{v_1}<\gamma_{w_3}$ and and $\beta_{v_3}<\gamma_{w_1}$. This implies that $S_{v_1}$ and $T_{w_1}$ do not meet, a contradiction. Now, assume that $w\neq w_3$ and thus, $S_{v_3}$ and $T_w$ do not meet. Let $v^*$ be the vertex where $(v^*, w)$ is in the induced matching. As $\{(v_i, w_i):i\in [3]\}$ is an induced matching, $\beta_{v^*}<\gamma_{w_3}$ and $\gamma_w>\beta_{v_3}$. Then 
$S_{v^*}$ cannot meet $T_{w}$, a contradiction.  

\medskip
(Case 2. $\alpha_w=\alpha_{v_3}$.)  In this case, $w$ must be $w_3$. If $\gamma_{w_3}\le \beta_{v_3}$, then $S_{v_1}$ and $T_{w_1}$ cannot meet. So, $\gamma_{w_3}>\beta_{v_3}$, and it is not difficult to verify that $\beta_{v_3}<\beta_{v_1}, \beta_{v_2}, \gamma_{w_1}, \gamma_{w_2}<\gamma_{w_3}$, otherwise, $\{(v_i,w_i):i\in [3]\}$ cannot be an induced matching. If $\beta_{v_1}\le \beta_{v_2}$, then $T_{w_1}$ has to meet $S_{v_2}$, a contradiction. Similarly, if $\beta_{v_2}< \beta_{v_1}$, then $T_{w_2}$ has to meet $S_{v_1}$, a contradiction.

\medskip
It shows that $\mim_G^+(A)\le 2$, as claimed. By a symmetric argument, we have $\mim_G^-(A)\le 2$ and $\bimimval_G(A)\le 4$.

We conclude that $L$ has bi-mim-width at most $4$.
\end{proof}

\subsection{Rooted directed path digraphs}

A \emph{directed tree} is an orientation of a tree, and it is \emph{rooted} if there is a root node $r$ such that every vertex is reachable from $r$ by a directed path. 
Gavril~\cite{Gavril1975} introduced the class of \emph{rooted directed path graphs}, that are intersection graphs of directed paths in a rooted directed tree. We introduce its directed analogue.

\begin{definition}[Rooted directed path digraph]
    A digraph $G$ is a \emph{rooted directed path digraph} if there is a rooted directed tree $T$ and a family
    $\{(S_v, T_v):v\in V(G)\}$ of pairs of directed paths in $T$ where $G$ is the intersection digraph with representation $\{(S_v, T_v):v\in V(G)\}$. A directed rooted path digraph is \emph{adjusted} if for every $v\in V(G)$, the endpoint of $S_v$ that is farther from the root is the same as the endpoint of $T_v$ that is farther from the root.
\end{definition}

Clearly, interval digraphs are rooted directed path digraphs, and therefore, rooted directed path digraphs have unbounded bi-mim-width. We prove that adjusted rooted directed path digraphs have bounded bi-mim-width and have unbounded linear bi-mim-width.

\begin{proposition}\label{prop:reflexivecontainmentH}
Given a representation of an adjusted rooted directed path digraph $G$, one can construct in polynomial time a branch decomposition of $G$ of bi-mim-width at most $2$. 
Adjusted rooted directed path digraphs have unbounded linear bi-mim-width.
\end{proposition}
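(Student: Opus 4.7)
The proposition has two parts, and the plan for the upper bound is to construct a branch decomposition $(\decT, \decf)$ that recursively mirrors the host rooted directed tree $T$ of the given representation. For each node $x$ of $T$, I would inductively build a subtree $\decT_x$ of $\decT$ whose leaves are exactly the vertices $v \in V(G)$ with $\alpha_v$ in the subtree of $T$ rooted at $x$, combining the recursively built subtrees for the children of $x$ with the leaves for $V_x := \{v : \alpha_v = x\}$ via any convenient binary-tree arrangement; this construction is clearly polynomial-time given the representation. Every cut $(A, B)$ induced by an edge of $\decT$ then corresponds to a ``tree cut'' in $T$, possibly with a local rearrangement at a single node.

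The core bound $\bimimval_G(A) \le 2$ would then follow from the observation that, for any edge $v \to w$ with $v \in A$ and $w \in B$, the paths $S_v$ and $T_w$ must share a node of $T$, and since $\alpha_v$ and $\alpha_w$ lie on different sides of the cut, the shared node must lie on the root-to-$\mathrm{LCA}(\alpha_v, \alpha_w)$ path of $T$; the cut structure then pins this shared node down to a specific ``bridge'' node $x$ of $T$ (the boundary between the two zones), so both $S_v$ and $T_w$ necessarily contain $x$. Consequently, if $\{(v_1, w_1), (v_2, w_2)\}$ were an induced matching in $G[A \to B]$, then $S_{v_1}$ and $T_{w_2}$ would both contain $x$, yielding an edge $v_1 \to w_2$ and contradicting the induced property. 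Hence $\nu(G[A \to B]) \le 1$ and, symmetrically, $\nu(G[B \to A]) \le 1$.

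For the second statement, my plan is to use that every undirected tree is a rooted directed path graph, by a simple induction on the number of vertices (each new leaf of $T$ can be represented by a fresh child of the deep end of its neighbor's host path, extending that host path to include the new node). Given such a representation $\{P_v\}_{v \in V(T)}$ of a tree $T$ in a rooted directed tree $T'$, setting $S_v = T_v = P_v$ vacuously satisfies the adjusted condition and yields an intersection digraph whose edges form exactly the biorientation of $T$ augmented with loops. Since loops do not affect cut values, the cut-wise identity established in the proof of \cref{lem:biorientation} gives $\linbimimw(\text{biorientation of } T) = 2 \cdot \linmimw(T)$; taking $T$ from a family of trees with unbounded linear mim-width---for instance, spiders with many sufficiently long legs, whose edges crossing any balanced linear cut form large induced matchings---then exhibits adjusted rooted directed path digraphs of unbounded linear bi-mim-width.

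The main obstacle is the case analysis for the upper-bound step when the cut is internal to the local arrangement at a high-degree node $x$ of $T$ and $A$ mixes vertices with $\alpha_v = x$ with those whose $\alpha_v$ lies in some of $x$'s children's subtrees; one must verify that the bridge-node argument continues to yield the induced-matching contradiction in this mixed regime, by carefully tracking which of $S_v$ and $T_w$ extend past $x$ towards the root of $T$ and which stop at $x$.
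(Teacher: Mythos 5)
Your overall strategy coincides with the paper's: a branch decomposition that mirrors the host rooted tree, grouping each vertex $v$ by the deep common endpoint $\alpha_v$ of $S_v$ and $T_v$, and a lower bound via biorientations of trees combined with the unboundedness of linear mim-width on trees and the cut-wise identity of \cref{lem:biorientation}. However, the central step of your upper bound is not correct as stated. For a cross edge $v \to w$ you claim that the cut structure ``pins'' the shared node of $S_v$ and $T_w$ to a single bridge node $x$ contained in \emph{both} paths. This is false for $T_w$: when $\alpha_w$ is not a descendant of $x$, the path $T_w$ terminates at $\alpha_w$ and need not pass through $x$ at all. All that follows from your (correct) LCA observation is that $S_v \cap T_w$ lies somewhere on the root-to-$x$ path $Q$, and different matched pairs may intersect $Q$ at different heights; so the one-line contradiction ``$S_{v_1}$ and $T_{w_2}$ both contain $x$, hence $v_1 \to w_2$'' does not follow. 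The missing argument, which is exactly what the paper supplies, is an ordering along $Q$: each trace $S_{v_i} \cap Q$ is a contiguous subpath of $Q$ whose lower end is the bottom of $Q$ (since $S_{v_i}$ continues down to $\alpha_{v_i}$ on the $A$-side), so the two traces are nested; if, say, $S_{v_1} \cap Q \supseteq S_{v_2} \cap Q$, then the node where $T_{w_2}$ meets $S_{v_2}$ already lies in $S_{v_1}$, yielding the edge $v_1 \to w_2$. (The paper phrases this as comparing the distances from the cut edge to $T_{w_1}$ and to $T_{w_2}$.) The same nestedness also resolves the ``mixed'' cuts internal to the arrangement at a high-degree node that you flag as an unresolved obstacle, since there too every cross intersection lies on the root-to-$x$ path and every relevant $S_v$-trace ends at $x$.

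Two smaller points. First, the paper sidesteps your case analysis by normalizing the representation beforehand so that the $\alpha_v$ are pairwise distinct non-branching nodes and the host tree is essentially subcubic; your ``convenient binary arrangement'' plays the same role but then forces you to handle the mixed cuts explicitly. Second, spiders with many long legs do \emph{not} witness unbounded linear mim-width (a leg-by-leg ordering gives linear mim-width at most $2$, since all center-to-leg cut edges share the center), so for the second statement you must invoke the result of H{\o}gemo, Telle, and V{\aa}gset that trees in general have unbounded linear mim-width, which is precisely what the paper cites.
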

\begin{proof}
Let $F$ be a rooted directed tree with root node $r$. Let $G$ be an adjusted rooted directed path digraph with representation $\mathcal{M}=\{(\sourceset_v, \targetset_v): v\in V(G)\}$ where $S_v$ and $T_v$ are directed paths in $F$.

Now, we modify $(F, \mathcal{M})$ into $(F^*, \mathcal{M}^*=\{(\sourceset_v^*, \targetset_v^*): v\in V(G)\})$ so that $F^*$ is a rooted directed tree and $\mathcal{M}^*$ is an adjusted rooted directed path digraph representation of $G$ consisting of directed paths in $F^*$ with additional conditions that 
\begin{itemize}
    \item every internal node of $F^*$ has out-degree at most $2$, 
    \item for every vertex $v\in V(G)$, the endpoint $\alpha_v$ of $S^*_v$ that is farther from the root of $F^*$ has out-degree at most $1$ in $F^*$, and nodes in $(\alpha_v:v\in V(G))$ are pairwise distinct.
\end{itemize}

Let $(F_1, \mathcal{M}_1):=(F, \mathcal{M})$. For every $v\in V(G)$, let $\alpha_v^1$ be the endpoint of $S_v$ that is farther from the root in $F$.
We  recursively construct $(F_i, \mathcal{M}_i=\{(\sourceset_v^i, \targetset_v^i): v\in V(G)\}), (\alpha_v^i:v\in V(G)))$ until we get the desired conditions.
\begin{itemize}
    \item Assume that $F_i$ has a node $t$ of out-degree at least $3$. Let $t_1, \ldots, t_x$ be the out-neighbors of $t$ in $F_i$. Now, we remove all edges between $t$ and $\{t_1, \ldots, t_x\}$, and then add a directed path $p_1p_2 \cdots p_x$ and add an edge $(t, p_1)$ and edges $(p_i, t_i)$ for all $i\in [x]$. Now, for every path $P$ containing $tt_j$ for some $j$ in the representation $\mathcal{M}_i$ we replace $tt_j$ with a path $tp_1p_2 \cdots p_jt_j$. The other paths do not change. The resulting rooted directed tree and representation are $F_{i+1}$ and $\mathcal{M}_{i+1}$, respectively.
    \item Assume that $\alpha_v^i=\alpha_w^i=t$ for some distinct vertices $v$ and $w$ in $G$. Let $q$ be an out-neighbor of $t$ in $F_i$ if one exists, and otherwise we attach a new node $q$ and add an edge $(t,q)$. We replace $(t,q)$ with a directed path $tt'q$ to obtain $F_{i+1}$. For every path in  $\mathcal{M}_i$ containing $(t,q)$ we replace $(t,q)$ with a directed path $tt'q$, and assign $\alpha_v^{i+1}:=t'$.
    For every path in $\{S^i_v, T^i_v\}$, we extend it by adding $tt'$. The other paths do not change. The resulting  representation is $\mathcal{M}_{i+1}$. 
    
    \item Assume that $\alpha_v^i=t$ for some vertex $v$ in $G$, where $t$ is a node of out-degree at least $2$. Let $q$ be an out-neighbor of $t$ in $F_i$. We replace $(t,q)$ with a directed path $tt'q$ to obtain $F_{i+1}$. For every path in $\{S^i_v, T^i_v\}$, we extend it by adding $tt'$, and we assign $\alpha_v^{i+1}:=t'$. The other paths do not change. The resulting  representation is $\mathcal{M}_{i+1}$.
\end{itemize} 
In each iteration, either the number of nodes of out-degree at least $3$ decreases, or the number of pairs of vertices $v$ and $w$ for which $\alpha_v^i=\alpha_w^i$ decreases, or the number of vertices $v$ for which $\alpha_v^i$ is a node of out-degree at least $3$ decreases. Also, the process in each iteration preserve that the representation is an adjusted rooted directed path digraph representation. Thus, at the end, we obtain an adjusted rooted directed path digraph representation $(F^*, \mathcal{M}^*)$ with the set of common endpoints $(\alpha_v^*:v\in V(G))$, as desired.
Let~$U$ be the underlying undirected tree of $F^*$.

For each $v\in V(G)$, we add a new node $\beta_v$ and add an edge $\beta_v\alpha_v^*$ to $U$, and obtain a tree $U^*$. Now, let $e$ be an edge of $U^*$, and let $U_X$ and $U_Y$ be the connected component of $U^* - e$ where $U_Y$ contains the root of $F^*$.
    Let $(X_e, Y_e)$ be the cut of $G$ where
    $X_e$ is the set of vertices $v$ for which $\alpha_v^*$ is in $U_X$, and $Y_e=V(G)\setminus X_e$.

We claim that $\mim_G^+(X_e)\le 1$. Suppose for contradiction that there is an induced matching $\{(v_i, w_i):i\in [2]\}$ with $v_1, v_2\in X_e$. 
Let $Q$ be the path in $U^*$ from the root to $e$. 

Assume that the distance from $e$ to $T_{w_1}$ in $U^*$ is at most the distance from $e$ to $T_{w_2}$. Since $S_{v_1}$ and $S_{v_2}$ are directed paths, $T_{w_1}$ and $T_{w_2}$ contain a vertex of $Q$. It means that $S_{v_2}$ has to meet $T_{w_1}$, which is a contradiction. If the dsitance from $e$ to $T_{w_2}$ in $U^*$ is at most the distance from $e$ to $T_{w_1}$, then $S_{v_1}$ has to meet $T_{w_2}$, a contradiction.

Thus, we have $\mim_G^+(X_e)\le 1$, and by symmetry, we can show that $\mim_G^-(X_e)\le 1$. It implies that $\bimimval_G(X_e)\le 2$. By smoothing degree $2$ nodes of $U^*$, we obtain a branch-decomposition of bi-mim-width at most $2$.

Now, we argue that 
adjusted rooted directed path digraphs have unbounded linear bi-mim-width. It is well known that trees are rooted directed path graphs. Also, H{\o}gemo, Telle, and V{\aa}gset~\cite{HogemoTV2019} proved that trees have unbounded linear mim-width. As trees are rooted directed path graphs, we can obtain the biorientations of trees as adjusted rooted directed path digraphs, where $S_v=T_v$ for all $v\in V(G)$. As trees have unbounded linear mim-width, adjusted rooted directed path digraphs have unbounded linear bi-mim-width.
\end{proof}

\subsection{$H$-convex digraphs}

As a generalization of convex graphs, Brettell, Munaro, and Paulusma~\cite{BrettellMP2020} introduced $H$-convex graphs. Note that they defined $\mathcal{H}$-convex graphs, rather than $H$-convex graphs, where $\mathcal{H}$ is a family of graphs. However, they mostly considered $\mathcal{H}$ as the set of all subdivisions of a fixed graph $H$, so it seems better to call them simply $H$-convex graphs.
We generalize this notion to $H$-convex digraphs. 

\begin{definition}[$H$-convex digraph]
    Let $H$ be an undirected graph. 
    A bipartite digraph $G$ with bipartition $(A, B)$ is an \emph{$H$-convex digraph} if there is a subdivision $F$ of $H$ with $V(F)=A$ such that for every vertex $b$ of $B$, each of $N^+_G(v)$ and $N^-_G(v)$ induces a connected subgraph of $F$.
    An $H$-convex digraph is \emph{nice} if for every vertex $b$ of $B$, there is a bi-directed edge between $b$ and some vertex of $A$.
\end{definition}

In principle, $H$-convex digraphs are $H$-digraphs are closely related, as $\{(N_G^+(v), N_G^-(v)):v\in B\}$ can be seen as an $H$-digraph representation on $F$.
We prove that nice $H$-convex digraphs have linear bi-mim-width at most $12\abs{E(H)}$. 

\begin{proposition}\label{prop:reflexiveconvexH}
Let $H$ be an undirected graph. Given a nice $H$-convex digraph $G$ with its bipartition $(A,B)$, one can construct in polynomial time a linear branch decomposition of $G$ of bi-mim-width at most $12\abs{E(H)}$.
\end{proposition}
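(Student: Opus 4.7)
The plan is to reduce to Proposition~\ref{prop:reflexiveH} by exhibiting a reflexive $H$-digraph representation of $G$ modulo loops, and then to argue that loops do not affect bi-mim-width. Given $G$, its bipartition $(A, B)$, and the subdivision $F$ of $H$ with $V(F) = A$ witnessing $H$-convexity (which we take to be part of the input), I would define a representation $\{(S_v, T_v) : v \in V(G)\}$ on $F$ as follows. For each $a \in A$, let $S_a$ and $T_a$ both be the one-vertex subgraph of $F$ consisting of $a$. For each $b \in B$, let $S_b := F[N_G^+(b)]$ and $T_b := F[N_G^-(b)]$; these are connected subgraphs of $F$ by the definition of $H$-convexity.

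The next step is to verify two properties of this representation. First, it is reflexive: for $a \in A$, clearly $S_a \cap T_a = \{a\} \neq \emptyset$, and for $b \in B$, niceness provides a vertex $a^* \in A$ with both $(a^*, b) \in E(G)$ and $(b, a^*) \in E(G)$, so $a^* \in N_G^+(b) \cap N_G^-(b)$ and hence $S_b \cap T_b \neq \emptyset$. Second, the resulting intersection digraph $G^+$ agrees with $G$ on all non-loop edges: since $G$ is bipartite with parts $A$ and $B$, its edges go only between $A$ and $B$, and a direct check gives $(a, b) \in E(G^+) \iff a \in N_G^-(b) \iff (a, b) \in E(G)$ and symmetrically for $(b, a)$, while no non-loop edges are introduced within $A$ or within $B$. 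In short, $G^+$ differs from $G$ only in that it may carry loops at some vertices.

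The last step uses that loops do not affect bi-mim-width. Indeed, for any cut $(X, \bar{X})$, a loop $(u, u)$ has both endpoints on the same side and therefore lies in neither $G[X \to \bar{X}]$ nor $G[\bar{X} \to X]$, so it contributes nothing to $\bimimval_G(X)$. Hence any linear branch decomposition of $G^+$, interpreted as a decomposition of $G$, has the same bi-mim-width. Applying Proposition~\ref{prop:reflexiveH} to the reflexive $H$-digraph $G^+$ with the representation above produces in polynomial time a linear branch decomposition of $G^+$ of bi-mim-width at most $12\abs{E(H)}$, which is the sought decomposition for $G$. The only mild obstacle is the bookkeeping around the loops introduced by the $A$-vertex singletons; no new combinatorial argument beyond Proposition~\ref{prop:reflexiveH} is required.
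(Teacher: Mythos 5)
There is a genuine gap in your second step. With $S_b := F[N_G^+(b)]$ and $T_{b'} := F[N_G^-(b')]$ for $b, b' \in B$, the intersection digraph $G^+$ contains the edge $(b, b')$ whenever $N_G^+(b) \cap N_G^-(b') \neq \emptyset$, i.e.\ whenever there is a directed path $b \to a \to b'$ of length two through some $a \in A$. This happens routinely (already when two vertices of $B$ share a single common neighbour with the appropriate orientations), so the claim that ``no non-loop edges are introduced within $B$'' is false: $G^+$ is $G$ plus loops \emph{plus} a possibly large set of $B$--$B$ edges. This cannot be absorbed into the loop argument, because maximum induced matching size is not monotone under edge deletion: for a cut $(X, \bar{X})$, a spurious edge $(b, b')$ with $b \in X$ and $b' \in \bar{X}$ may be exactly what prevents two crossing edges from forming an induced matching in $G^+[X \to \bar{X}]$, so $\nu(G[X \to \bar{X}])$ can exceed $\nu(G^+[X \to \bar{X}])$. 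Hence the bound furnished by Proposition~\ref{prop:reflexiveH} for $G^+$ does not transfer to $G$. (A smaller issue, which genuinely is bookkeeping, is that the chosen points $\alpha_b$ for distinct $b \in B$ may coincide with each other or with vertices of $A$; that is handled by subdividing $F$ as in Proposition~\ref{prop:reflexiveH}.)

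The paper avoids this by not reducing to Proposition~\ref{prop:reflexiveH}: it uses essentially the same linear order you would obtain (a BFS order on $V(F) = A$, with each $b \in B$ inserted just after a chosen $\alpha_b \in N_G^+(b) \cap N_G^-(b)$, which exists by niceness), but then bounds each cut directly in $G$, splitting it along the bipartition and showing $\nu(G[A_X \to B_Y]) \le 4|E(H)|$ and $\nu(G[A_Y \to B_X]) \le 2|E(H)|$ via the path system $\mathcal{P}$; bipartiteness of $G$ guarantees there are no $B$--$B$ edges to interfere. If you want to keep a reduction-style argument, you would need a representation whose intersection digraph equals $G$ up to loops, which requires separating $S_a$ from $T_a$ on the subdivision so that $S_b$ and $T_{b'}$ can both reach $a$ without meeting each other; arranging this while keeping all sets connected is not a routine step, and as written your reduction does not go through.
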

\begin{proof}
Let $m:=\abs{E(H)}$. Let $G$ be a reflexive $H$-convex digraph with bipartition $(X, Y)$. Let $F$ be a subdivision of $H$ with $X=V(F)$ and let $\mathcal{M}=\{(N^+_G(y), N^-_G(y)):y\in Y\}$.
For each $y\in Y$, we choose $\alpha_y\in N^+_G(y)\cap N^-_G(y)$. By adding more vertices in $X$ if necessary (corresponding to a subdivision of $F$), we may assume that vertices in $(\alpha_y:y\in Y)$ are pairwise distinct and they are not branching vertices of $F$. This assumption is possible because of Lemma~\ref{lem:subdigraph}.

We fix a vertex $r$ of $F$ and obtain a BFS ordering of $F$ starting from $r$. We denote by $v<_B w$ if $v$ appears before $w$ in the BFS ordering. This gives an ordering of $V(F)=X$.
Now, we extend it to an ordering $L$ of $V(G)$ by, for each $y\in Y$, putting $y$ just after $\alpha_y$.

We claim that $L$ has width at most $12m$. We choose a vertex $v$ of $G$ arbitrarily, and let $A$ be the set of vertices in $G$ that are $v$ or a vertex appearing before $v$ in $L$, and let $B:=V(G)\setminus A$. It is sufficient to show that $\mim_G^+(A)\le 6m$. By symmetry, we will get $\mim_G^-(A)\le 6m$.

Let $A_X:=A\cap X$, $A_Y:=A\cap Y$, $B_X:=B\cap X$, and $B_Y:=B\cap Y$. We first show that 
$\nu(G[A_X\to B_Y])\le 4m$. 

Suppose for contradiction that 
$G[A_X\to B_Y]$ contains an induced matching $M$ of size $4m+1$. 
As defined in Proposition~\ref{prop:reflexiveH},
we define $\mathcal{P}$ as the set of paths in $F$ such that
\begin{itemize}
    \item for every $P\in \mathcal{P}$, $P$ is a subpath of some branching path of $F$ and it is a maximal path contained in one of $A_X$ and $B_X$, 
    \item $\bigcup_{P\in \mathcal{P}}V(P)=V(F)$.
\end{itemize}
Note that each branching path contains at most two paths in $\mathcal{P}$ that are contained in $A_X$.
By the pigeonhole principle, there is a subset $M_1=\{(x_i, y_i):i\in [3]\}$ of $M$ of size $3$ and a path $P\in \mathcal{P}$ with $V(P)\subseteq A_X$ such that for every $(x,y)\in M_1$, $x$ is in $P$. Observe that $\alpha_{y_i}$ is not in $P$ for every $i\in [3]$. 

Let $p_1$ and $p_2$ be the endpoints of $P$. Observe that for each $i\in [3]$, $N^-_G(y_i)$ contains either the subpath of $P$ from $p_1$ to $x_i$ or the subpath of $P$ from $x_1$ to $p_2$. Without loss of generality, we may assume that for each $i\in [2]$, $N^-_G(y_i)$ contains the subpath of $P$ from $p_1$ to $x_i$.
But this implies that $(x_1, y_2)$ or $(x_2, y_1)$ is an edge, contradicting the assumption that $M$ is an induced matching.

We deduce that $\nu(G[A_X\to B_Y])\le 4m$. Note that each branching path contains at most one path in $\mathcal{P}$ that is contained in $B_x$. Thus, by a similar argument, we can show that $\nu(G[A_Y\to B_X])\le 2m$. So, we have $\mim_G^+(A)=\nu(G[A\to B])\le 6m$. By symmetry, we get $\mim_G^-(A)\le 6m$ as well, and these imply that $\bimimval_G(A)\le 12m$, as required.
\end{proof}

\begin{proposition}\label{prop:convexunbounded}
$P_2$-convex digraphs have unbounded bi-mim-width.
\end{proposition}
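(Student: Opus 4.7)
The plan is to reduce to Proposition~\ref{prop:intunbounded} (interval digraphs have unbounded bi-mim-width) by exploiting Lemma~\ref{lem:powerbimim} (the square of a digraph has bi-mim-width at most twice that of the original). The idea is to encode a given interval digraph as an ``incidence-like'' bipartite $P_2$-convex digraph whose directed square recovers the interval digraph.

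Concretely, I would start from the interval digraph $G_n$ produced in Proposition~\ref{prop:intunbounded}, together with its representation $\{(S_v,T_v):v\in V(G_n)\}$, and then pick a finite set $A\subseteq\mathbb{R}$ of sample points on the line that witnesses every non-empty intersection, in the sense that $S_v\cap T_w\neq\emptyset$ iff $S_v\cap T_w\cap A\neq\emptyset$ for all $v,w\in V(G_n)$ (for example, all integer points in the relevant range, which suffice for the explicit construction in Proposition~\ref{prop:intunbounded}). Sorting $A$ along the line turns it into a path $F$ with $V(F)=A$. I then let $G'_n$ be the bipartite digraph with bipartition $(A,V(G_n))$ whose arcs are $v\to a$ for $a\in S_v$ and $a\to v$ for $a\in T_v$. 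Since $S_v$ and $T_v$ are intervals of $\mathbb{R}$, $N^+_{G'_n}(v)$ and $N^-_{G'_n}(v)$ are subpaths of $F$, so $G'_n$ is a $P_2$-convex digraph.

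The crux is the observation that $(G'_n)^2[V(G_n)]$ coincides with $G_n$, up to loops: because $G'_n$ is bipartite with $V(G_n)$ on one side, any directed walk of length at most $2$ from $v$ to $w$ with $v\neq w$ must have length exactly $2$ and pass through some $a\in A$ with $a\in S_v\cap T_w\cap A$, and such an $a$ exists iff $v\to w$ in $G_n$. Loops play no role in the cut function $\bimimval$, since their two endpoints coincide and cannot straddle a cut; therefore $\bimimw((G'_n)^2[V(G_n)])=\bimimw(G_n)$. Chaining with Lemmas~\ref{lem:subdigraph} and~\ref{lem:powerbimim} gives
\[
\bimimw(G_n)\;=\;\bimimw\!\bigl((G'_n)^2[V(G_n)]\bigr)\;\le\;\bimimw\bigl((G'_n)^2\bigr)\;\le\;2\,\bimimw(G'_n),
\]
so from $\bimimw(G_n)\ge n/3$ we obtain $\bimimw(G'_n)\ge n/6$, which is unbounded.

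I do not foresee any serious difficulty. The only care needed is the routine verification that the chosen $A$ is fine enough to witness every relevant intersection $S_v\cap T_w$ (trivial for the specific grid orientation of Proposition~\ref{prop:intunbounded}), together with the observation that loops can be freely ignored when computing $\bimimval$.
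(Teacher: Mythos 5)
Your proof is correct, but it takes a genuinely different route from the paper's. Both arguments start from essentially the same bipartite incidence digraph built from the interval representation of Proposition~\ref{prop:intunbounded} (arcs $v\to a$ for points $a$ of $S_v$ and $a\to v$ for points of $T_v$, with the sample points forming the path $F$). The paper then observes that this bipartite digraph contains an orientation of the $1$-subdivision of the $(n\times n)$-grid as an induced subdigraph, asserts that the $1$-subdivision of the grid still has mim-width at least $n/3$ (by modifying the grid lower bound), and concludes via Lemma~\ref{lem:underlying}. You instead observe that the directed square of the bipartite digraph, restricted to the interval-digraph side, recovers $G_n$ up to loops, and then chain Lemmas~\ref{lem:subdigraph} and~\ref{lem:powerbimim} to transfer the bound $\bimimw(G_n)\ge n/3$ as a black box, losing a factor of $2$. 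Your route is more modular --- it avoids having to re-derive a mim-width lower bound for the subdivided grid, which the paper only sketches --- at the cost of a slightly weaker constant ($n/6$ versus $n/3$), which is immaterial since only unboundedness is claimed. The supporting details you flag are indeed routine: integer sample points witness every intersection because all interval endpoints in Proposition~\ref{prop:intunbounded} are integers, and loops never contribute to $\bimimval$ since both endpoints of a loop lie on the same side of any cut.
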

\begin{proof}
We recall the interval digraph representation of an orientation of the $(n\times n)$-grid, given in Proposition~\ref{prop:intunbounded}.
For $i,j\in [n]$, we construct $S_{i,j}$ and $T_{i,j}$ as follows.
For every odd integer $i$, we set \begin{itemize}
    \item $S_{i,j}:=[2(n+1)j+2i-1, 2(n+1)j+2i+1]$ and,
    \item $T_{i,j}:=[2(n+1)(j-1)+2i, 2(n+1)(j-1)+2i]$,
\end{itemize} 
and for every even integer $i$, we set
\begin{itemize}
    \item $S_{i,j}:=[2(n+1)(j-1)+2i, 2(n+1)(j-1)+2i]$ and
    \item $T_{i,j}:=[2(n+1)j+2i-1, 2(n+1)(j-1)+2i+1]$.
\end{itemize}

Now, we create a bipartite digraph $G_n$ with bipartition $(A, B=\{v_{i,j}:i,j\in [n]\})$ such that  
for $i,j\in [n]$, 
\begin{itemize}
    \item if $i$ is odd, then $N^+_G(v_{i,j})=S_{i,j}$ and $N^-_G(v_{i,j})=T_{i,j}$, 
    \item if $i$ is even, then 
    $N^+_G(v_{i,j})=T_{i,j}$ and $N^-_G(v_{i,j})=S_{i,j}$.
\end{itemize}
It is not difficult to verify that $G_n$ contains an orientation of the $1$-subdivision of the $(n\times n)$-grid as an induced subdigraph. By modifying the proof for the fact that the $(n\times n)$-grid has mim-width at least $n/3$, it is straightforward to show that the $1$-subdivision of the $(n\times n)$-grid has mim-width at least $n/3$. Therefore, by Lemma~\ref{lem:underlying}, $G_n$ has bi-mim-width at least $n/3$.
\end{proof}

\section{Algorithmic applications}\label{sec:application}
In this section we give the algorithmic applications of the width measure bi-mim-width.
In particular, we show that all directed locally checkable vertex subset and all directed locally checkable vertex partitioning problems can be solved in $\XP$ time paramterized by the bi-mim-width of a given branch decomposition of the input digraph.
We do so by adapting the framework of the $d$-neighborhood equivalence relation introduced by Bui-Xuan et al.~\cite{Bui-XuanTV13} to digraphs.
For an undirected graph $G$, given a set $A \subseteq V(G)$, two subsets $X$ and $Y$ of $A$ are $d$-neighborhood equivalent w.r.t.~$A$ if the intersection of the neighborhood of each vertex in $\bar{A}$ with $X$ and $Y$ have the same size, when counting up to $d$.

In the adaptation of this concept to digraphs,
we essentially take the cartesian product of the $d$-neighborhood equivalences given by the edges going from $A$ to $\bar{A}$ and the edges going from $\bar{A}$ to $A$.
In the resulting $d$-bi-neighborhood equivalence relation of a set $A$ of vertices in some digraph $G$,
two subsets $X$ and $Y$ of $A$ are $d$-bi-neighborhood equivalent, 
if for each vertex of $\bar{A}$, the number of both its in-neighbors in $X$ and the number of its out-neighbors in $X$ are equal to the number of its in- and out-neighbors in $Y$, respectively, when counted up to $d$.
We show that this notion allows us to lift the frameworks presented in~\cite{Bui-XuanTV13} to the realm of digraphs and prove the aforementioned results.

The rest of this section is organized as follows.
In \cref{sec:bi-nb-equiv}, we formally define the $d$-bi-neighborhood equivalence relation and show how to efficiently compute descriptions of its equivalence classes. 
In \cref{sec:alg:sigma-rho} we give the algorithms for generalized directed domination problems and in \cref{sec:alg:lcvp} for the directed vertex partitioning problems.
We discuss how to use these algorithms to solve distance-$r$ versions of LVCS and LCVP problems in \cref{sec:alg:distance-r}.

\subsection{$d$-Bi-neighhorhood-equivalence}\label{sec:bi-nb-equiv}
We now present the central notion that is used in our algorithms, the
$d$-bi-neighborhood equivalence relation which we introduced informally earlier.
The reason why it compares sizes of the intersection of a neighborhood with subsets only up to some fixed integer $d$ is as follows.
The subsets of natural numbers that characterize locally checkable vertex subset/partitioning problems can be fully characterized when counting in- and out-neighbors up to some constant $d$, depending on the described problem.
Therefore, if a vertex $v$ has more than $d$ for instance out-neighbors in two sets $X$ and $Y$, then these two sets look the same to $v$ in terms of its out-neighborhood.

In the following definition, we present the $d$-in-neighborhood equivalence relation and the $d$-out-neighborhood equivalence relation separated before combining them to the $d$-bi-neighborhood equivalence relation, since in some proofs it is convenient to only consider the edges going in one direction.
\begin{definition}
	Let $d \in \bN$.
	Let $G$ be a digraph and $A \subseteq V(G)$.
	For two sets $X, Y \subseteq A$, we say that $X$ and $Y$ are \emph{$d$-out-neighborhood equivalent},
	written $X \equiv^+_{d, A} Y$, if\footnote{%
	Since the definition is given in terms of vertices from $\bar{A}$, 
	we consider the directions of the edges in reverse,
	i.e., we consider $N^-(v)$ for $v \in \bar{A}$ 
	when defining $\equiv^+$.}
	\begin{align*}
		\forall u \in V(G) \setminus A\colon \min\{d, \card{N^-(u) \cap X}\} = \min\{d, \card{N^-(u) \cap Y}\}.
	\end{align*}
	Similarly, we say that $X$ and $Y$ are \emph{$d$-in-neighborhood equivalent}, written $X \equiv^-_{d, A} Y$, if 
	\begin{align*}
		\forall u \in V(G) \setminus A\colon \min\{d, \card{N^+(u) \cap X}\} = \min\{d, \card{N^+(u) \cap Y}\}.
	\end{align*}
	If $X \equiv^+_{d, A} Y$ and $X \equiv^-_{d, A} Y$
	then we say that $X$ and $Y$ are $d$-bi-neighborhood equivalent and write
	$X \equiv^{\pm}_{d, A} Y$.
\end{definition}

The runtime of the algorithms in this section crucially depend on the number of equivalence classes of the $d$-bi-neighborhood equivalence relations associated with cuts induced by a branch decomposition of the input graph.
For $d, G,$ and $A$ as in the previous definition, we denote by $\nec(\equiv^\pm_{d, A})$ the number of equivalence classes of $\equiv^\pm_{d, A}$. 
If $(\decT, \decf)$ is a branch decomposition of $G$, we let
\begin{align*}
    \nec_d(\decT, \decf) = \max\nolimits_{t \in V(T)} \max \{\nec(\equiv^\pm_{d, V_t}), \nec(\equiv^\pm_{d, \bar{V_t}})\}.
\end{align*}

\subsubsection*{Descriptions of equivalence classes of $\equiv_{d, A}^\pm$}
Since $\equiv^\pm_{d, A}$ is an equivalence relation over subsets of $A$, 
we cannot trivially enumerate all its equivalence classes without risking an exponential running time.
We now show that we can enumerate the equivalence classes with a relatively small overhead depending polynomially on $n$, $d$, and $\log(\nec(\equiv^\pm_{d, A}))$. 
This enumeration is based on pairs of vectors called \emph{$d$-bi-neighborhoods} of a subset $X$ of $A$, one that describes the in-neighborhood of vertices in $\bar{A}$ intersected with $X$, and one for the out-neighborhood.
\begin{definition}
	Let $G$ be a digraph, $X \subseteq A \subseteq V(G)$, and $d \in \bN$.
	The \emph{$d$-out-neighborhood} of $X$, denoted by $\nbhv_{d, A}^+(X)$ is a
	vector in $\{0, 1, \ldots, d\}^{\bar{A}}$, 
	which stores for every vertex $v \in \bar{A}$
	the minimum between $d$ and the number of in-neighbors of $v$ in $X$. %\footnote{%
%		Since $N^+_{d, A}(X)$ describes a property of the out-neighbors of $X$,
%		we have to consider in-neighbors of the vertices in $\bar{A}$.
%	}
	Formally,
	\begin{align*}
		\nbhv_{d, A}^+(X) = (\min\{d, \card{N^-(v) \cap X}\})_{v \in \bar{A}}.
	\end{align*}
	Similarly, the \emph{$d$-in-neighborhood}, denoted by $\nbhv_{d, A}^-(X)$, is the vector
	\begin{align*}
		\nbhv_{d, A}^-(X) = (\min\{d, \card{N^+(v) \cap X}\})_{v \in \bar{A}}.
	\end{align*}
	We refer to the pair $(\nbhv^+_{d, A}(X), \nbhv^-_{d, A}(X))$ as the \emph{$d$-bi-neighborhood} $\nbhv^\pm_{d, A}(X)$;
	and we denote the set of all $d$-bi-neighborhoods as $\calnbhv^\pm_{d, A}$.
\end{definition}

\begin{observation}\label{obs:d-equiv:nbh}
	Let $G$ be a digraph and $X, Y \subseteq A \subseteq V(G)$.
	Then, $X \equiv_{d, A}^\pm Y$ if and only if $\nbhv^\pm_{d, A}(X) = \nbhv^\pm_{d, A}(Y)$.
\end{observation}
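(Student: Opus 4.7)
The plan is to observe that this statement follows by directly unfolding the two definitions involved, so the proof will be a short chain of equivalences with no real content beyond bookkeeping.

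First, I would handle the two ``$\pm$'' components separately. By definition, $X \equiv^+_{d, A} Y$ says exactly that for every $u \in \bar{A}$, $\min\{d, |N^-(u) \cap X|\} = \min\{d, |N^-(u) \cap Y|\}$; and $\nbhv^+_{d, A}(X)$ is by definition the vector whose $v$-coordinate is $\min\{d, |N^-(v) \cap X|\}$ for $v \in \bar{A}$. Two vectors indexed by $\bar{A}$ are equal if and only if they agree in every coordinate, so $X \equiv^+_{d, A} Y \iff \nbhv^+_{d, A}(X) = \nbhv^+_{d, A}(Y)$. The analogous argument, with $N^-$ replaced by $N^+$, gives $X \equiv^-_{d, A} Y \iff \nbhv^-_{d, A}(X) = \nbhv^-_{d, A}(Y)$.

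Next, I would combine these. By definition, $X \equiv^\pm_{d, A} Y$ means both $X \equiv^+_{d, A} Y$ and $X \equiv^-_{d, A} Y$, and $\nbhv^\pm_{d, A}(X) = (\nbhv^+_{d, A}(X), \nbhv^-_{d, A}(X))$. A pair is equal to another pair iff both components are equal, so chaining the two equivalences above yields
$X \equiv^\pm_{d, A} Y \iff \nbhv^\pm_{d, A}(X) = \nbhv^\pm_{d, A}(Y)$.

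There is no real obstacle here — the observation is essentially just restating the definition in vector form, and the whole proof is two lines. The only mildly subtle point to flag is that the coordinates of $\nbhv^+_{d, A}$ are indexed by $\bar{A}$ but use $N^-$ (and vice versa for $\nbhv^-_{d, A}$); making sure this convention is the same on both sides of the equivalence is the one thing to check before declaring the proof complete.
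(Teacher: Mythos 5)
Your proof is correct and matches the paper's treatment: the paper states this as an observation with no proof precisely because it follows by unfolding the definitions exactly as you do, coordinate-by-coordinate for each of the $+$ and $-$ components and then pairing them. Your remark about the index/neighborhood convention ($\bar{A}$-indexed coordinates using $N^-$ for the out-version and vice versa) is the right detail to check, and it is consistent on both sides.
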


By the previous observation, there is a natural bijection between the $d$-bi-neighborhoods 
and the equivalence classes of $\equiv_{d, A}^\pm$.
In our algorithm we will therefore use the $d$-bi-neighborhoods
as descriptions for the equivalence classes of $\equiv^\pm_{d, A}$.
We now show that we can efficiently enumerate them.
While the ideas are parallel to the algorithm presented in~\cite{Bui-XuanTV13},
we work directly with the $d$-bi-neighborhoods rather than with representatives
to streamline the presentation.
\begin{lemma}\label{lem:enum:desc}
	Let $G$ be a digraph on $n$ vertices, $A \subseteq V(G)$, and $d \in \bN$.
	There is an algorithm that enumerates all members of $\calnbhv^\pm_{d, A}$ in time 
	$\calO(\nec(\equiv^\pm_{d, A})\log\nec(\equiv^\pm_{d, A})\cdot dn^2)$.
	Furthermore, for each $Y \in \calnbhv^\pm_{d, A}$,
	the algorithm can provide some $X \subseteq A$ with $\nbhv^\pm_{d, A}(X) = Y$.
\end{lemma}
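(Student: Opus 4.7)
The plan is to adapt the incremental enumeration algorithm of Bui-Xuan, Telle, and Vatshelle \cite{Bui-XuanTV13} to the bi-directional setting. First I would fix an arbitrary ordering $v_1, \ldots, v_k$ of $A$ and build, iteratively for $i = 0, 1, \ldots, k$, a collection $\mathcal{F}_i$ of pairs $(X, \nbhv_{d, A}^\pm(X))$ with $X \subseteq \{v_1, \ldots, v_i\}$, maintaining the invariant that $\mathcal{F}_i$ contains exactly one pair per equivalence class of $\equiv_{d, A}^\pm$ restricted to subsets of $\{v_1, \ldots, v_i\}$. Starting from $\mathcal{F}_0 = \{(\emptyset, \mathbf{0})\}$, to build $\mathcal{F}_i$ from $\mathcal{F}_{i-1}$ I would, for each $(X, \nbhv_{d, A}^\pm(X)) \in \mathcal{F}_{i-1}$, produce two candidate pairs representing $X$ and $X \cup \{v_i\}$; the $d$-bi-neighborhood of $X \cup \{v_i\}$ is obtained from that of $X$ by incrementing (capped at $d$) the coordinate of $\nbhv^+_{d, A}$ at every $u \in \bar{A}$ with $(v_i, u) \in E(G)$, and similarly the coordinate of $\nbhv^-_{d, A}$ at every $u \in \bar{A}$ with $(u, v_i) \in E(G)$. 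After generating all candidates I would deduplicate by lexicographic sorting and keep one representative per distinct $d$-bi-neighborhood vector.

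Correctness of the invariant would follow by induction on $i$: a subset $Y \subseteq \{v_1, \ldots, v_i\}$ is either a subset of $\{v_1, \ldots, v_{i-1}\}$ (whose class is represented in $\mathcal{F}_{i-1}$ by hypothesis) or of the form $Y' \cup \{v_i\}$ with $Y' \subseteq \{v_1, \ldots, v_{i-1}\}$, in which case the $\mathcal{F}_{i-1}$-representative $X'$ of $Y'$ satisfies $X' \cup \{v_i\} \equiv_{d, A}^\pm Y$ by \cref{obs:d-equiv:nbh} together with the fact that adjoining $v_i$ modifies the capped counts in the same way for both $X'$ and $Y'$. At $i = k$ this yields the desired enumeration of $\calnbhv_{d, A}^\pm$, and the representative $X$ accompanying each vector in $\mathcal{F}_k$ provides the claimed witness with $\nbhv_{d, A}^\pm(X) = Y$.

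For the running-time analysis, the crucial point is that after deduplication each element of $\mathcal{F}_i$ carries a distinct $d$-bi-neighborhood, and each such vector corresponds to a distinct class of $\equiv_{d, A}^\pm$ on the whole of $A$ (by extending any $X \subseteq \{v_1, \ldots, v_i\}$ with $\emptyset$ on $\{v_{i+1}, \ldots, v_k\}$), so $|\mathcal{F}_i| \le \nec(\equiv_{d, A}^\pm)$ throughout. Each of the $k \le n$ iterations then examines at most $2\nec(\equiv_{d, A}^\pm)$ candidates, updates each $d$-bi-neighborhood in $O(dn)$ time, and deduplicates the collection in $O(\nec(\equiv_{d, A}^\pm) \log \nec(\equiv_{d, A}^\pm) \cdot dn)$ time by lexicographic sorting of the vectors, totalling the claimed $O(\nec(\equiv_{d, A}^\pm) \log \nec(\equiv_{d, A}^\pm) \cdot dn^2)$ bound. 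The main obstacle I anticipate is precisely this uniform bound $|\mathcal{F}_i| \le \nec(\equiv_{d, A}^\pm)$ on the intermediate collections, which is what prevents a combinatorial blow-up and justifies eager deduplication at every step; the rest of the argument is bookkeeping parallel to the undirected construction of \cite{Bui-XuanTV13}, the only genuine change being that each description now comprises a pair of vectors (for in- and out-neighborhoods) rather than a single vector.
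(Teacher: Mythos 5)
Your proposal is correct and follows essentially the same incremental enumeration strategy as the paper: grow representative sets one vertex at a time, detect new equivalence classes via their $d$-bi-neighborhood vectors (justified by \cref{obs:d-equiv:nbh}), and charge the work of each round to $\nec(\equiv^\pm_{d,A})$ to obtain the stated $\calO(\nec(\equiv^\pm_{d, A})\log\nec(\equiv^\pm_{d, A})\cdot dn^2)$ bound. The only difference is organizational---you fix an ordering of $A$ and at step $i$ extend only by $v_i$ (which makes the claim that adjoining $v_i$ changes the capped counts identically for $X'$ and $Y'$ immediate, since $v_i$ lies in neither set) and deduplicate by sorting, whereas the paper runs a breadth-first search that tries every vertex against every stored representative and deduplicates with a balanced binary search tree.
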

\begin{proof}
	We describe the procedure to enumerate $\calnbhv^\pm_{d, A}$ in \cref{alg:d-bi-nbh}.
	\begin{algorithm}[tb]
		\SetKwInOut{Input}{Input}
		\SetKwInOut{Output}{Output}
		\Input{Digraph $G$, $A \subseteq V(G)$, and $d \in \bN$.}
		\Output{$\calnbhv^\pm_{d, A}$}
		Let $\ttN$ be an empty balanced binary search tree\; \tcc{{\small To enable binary search in $\ttN$ 
			we assume a fixed ordering on $V(G)$; 
			the vectors in $\bN^{\bar{A}}$ can then be compared lexicographically.}}
		Let $\calB = \{\emptyset\}$\;
		\While{$\calB \neq \emptyset$}{
			Let $\calB' = \emptyset$\;
			\ForEach{$R \in \calB$}{
				\For{$v \in V(G)$}{
					Let $R' = R \cup \{v\}$\;
					\lIf{$\nbhv^\pm_{d, A}(R') \notin \ttN$}{add $\nbhv^\pm_{d, A}(R')$ to $\ttN$ and $R'$ to $\calB'$\label{line:alg:d-bi-nbh-add}}
				}
			}
			Let $\calB = \calB'$\;
		}
		\Return{\ttN}
		\caption{Algorithm to compute all $d$-bi-neighborhoods.}
		\label{alg:d-bi-nbh}
	\end{algorithm}
	
	Let us argue that this algorithm is correct.
	It is easy to observe that $\ttN$ only contains pairwise distinct $d$-bi-neighborhoods.
	Suppose for a contradiction that there is a set $X \subseteq A$ such that $\nbhv^\pm_{d, A}(X) \notin \ttN$,
	and assume wlog.\ that $X$ is a minimal subset of vertices whose $d$-bi-neighborhood is not contained in $\ttN$.
	Let $u \in X$.
	We know that for all $Y \in \ttN$, 
	$\nbhv^\pm_{d, A}(X \setminus \{u\}) \neq Y$,
	for otherwise, $\nbhv^\pm_{d, A}(X)$ would have been added to $\ttN$.
	But this contradicts the minimality of $X$.
	
	\cref{alg:d-bi-nbh} can easily be modified to satisfy the second claim of the lemma:
	In line~\ref{line:alg:d-bi-nbh-add}, instead of adding only $\nbhv^\pm_{d, A}(R')$ to $\ttN$,
	we may add the pair $(R', \nbhv^\pm_{d, A}(R'))$.
	
	We analyze the runtime as follows.
	For each $d$-bi-neighborhood that is added to $\ttN$,
	we test for at most $n$ additional sets whether their $d$-bi-neighborhoods need to be added to $\ttN$ or not.
	Together with \cref{obs:d-equiv:nbh},
	this implies that we test for at most $\calO(\nec(\equiv^\pm_{d, A})\cdot n)$
	sets whether they should be added to $\ttN$ or not.
	Computing the $d$-bi-neighborhood of a set $X \subseteq A$ can be done in time $\calO(d\cdot n)$,
	and since $\ttN$ is a balanced binary tree, we can check for containment in time 
	$\calO(\log(\nec(\equiv^\pm_{d, A}))\cdot dn)$,
	so the total runtime of the algorithm is
	$\calO(\nec(\equiv^\pm_{d, A})\log\nec(\equiv^\pm_{d, A})\cdot dn^2)$.
\end{proof}

\subsection{Generalized Directed Domination Problems}\label{sec:alg:sigma-rho}
In this section we use the $d$-bi-neighborhood equivalence relation to give algorithms for problems that ask for a maximum- or minimum-size set that can be expressed as a $(\sigma^+, \sigma^-, \rho^+, \rho^-)$-set.
The algorithm is bottom-up dynamic programming along the given branch decomposition $(\decT, \decf)$ of the input digraph $G$, which we assume to be rooted in an arbitrary degree two node. For a node $t \in V(T)$, we let $V_t$ be the vertices of $G$ that are mapped to a leaf in the subtree of $T$ rooted at $t$.
Before we proceed with its description, 
we recall the formal definition here of 
$(\sigma^+, \sigma^-, \rho^+, \rho^-)$-sets.
\begin{definition}
	Let $\sigma^+, \sigma^-, \rho^+, \rho^- \subseteq \bN$, and let $\Sigma = (\sigma^+, \sigma^-)$ and $\Rho = (\rho^+, \rho^-)$.
	Let $G$ be a digraph and $S \subseteq V(G)$.
	We say that $S$ $(\sigma^+, \sigma^-, \rho^+, \rho^-)$-dominates $G$, or simply that $S$ $(\Sigma, \Rho)$-dominates $G$,
	if:
	\begin{align*}
		\forall v \in V(G) \colon \card{N^+(v) \cap S} \in \left\lbrace%
				\begin{array}{ll}
					\sigma^+, &\mbox{if } v \in S \\
					\rho^+, &\mbox{if } v \notin S
				\end{array}
				\right.
				\mbox{ and }~
				\card{N^-(v) \cap S} \in \left\lbrace%
				\begin{array}{ll}
					\sigma^-, &\mbox{if } v \in S \\
					\rho^-, &\mbox{if } v \notin S
				\end{array}
				\right.
	\end{align*}
\end{definition}

For better readability, it is often convenient to gather the sets $\sigma^+$ and $\sigma^-$ as one and the sets $\rho^+$ and $\rho^-$ as one.
We will mostly use the resulting $(\Sigma, \Rho)$-notation.
We now recall the definition of the $d$-value of a finite or co-finite set.
Informally speaking, this value tells us how far we have to count until in order to completely describe a finite or co-finite set.
\begin{definition}
	Let $d(\bN) = 0$.
	For a finite or co-finite set $\mu \subseteq \bN$, let 
	\[d(\mu) = 1 + \min\{\max_{x \in \bN} x \in \mu, \max_{x \in \bN} x \notin \mu\}.\]
	For finite or co-finite $\sigma^+, \sigma^-, \rho^+, \rho^- \subseteq \bN$, $\Sigma = (\sigma^+, \sigma^-)$ and $\Rho = (\rho^+, \rho^-)$ 
	\[d(\sigma^+, \sigma^-, \rho^+, \rho^-) = d(\Sigma, \Rho) = \max\{d(\sigma^+), d(\sigma^-), d(\rho^+), d(\rho^-)\}.\]
\end{definition}

As our algorithm progresses, it keeps track of partial solutions that may become a $(\Sigma, \Rho)$-set once the computation has finished.
This does not necessarily mean that at each node $t \in V(T)$, such a partial solution $X \subseteq V_t$ has to be a $(\Sigma, \Rho)$-dominating set of $G[V_t]$.
Instead, we additionally consider what is usually referred to as the ``expectation from the outside''~\cite{Bui-XuanTV13} in form of a subset $Y$ of $\bar{V_t}$ such that $X \cup Y$ is a $(\Sigma, \Rho)$-dominating set of $G[V_t]$. This is captured in the following definition.
\begin{definition}
	Let $\mu^+, \mu^- \subseteq \bN$ and let $\Mu = (\mu^+, \mu^-)$.
	Let $G$ be a digraph, and let $A \subseteq V(G)$ and $X \subseteq V(G)$.
	We say that $X$ $\Mu$-dominates $A$ if for all $v \in A$, 
	we have that $\card{N^+(v) \cap X} \in \mu^+$ and $\card{N^-(v) \cap X} \in \mu^-$.
	Let $\Sigma$ and $\Rho$ be as above.
	For $X \subseteq A$ and $Y \subseteq \overline{A}$,
	we say that $(X, Y)$ $(\Sigma, \Rho)$-dominates $A$,
	if $X \cup Y$ $\Sigma$-dominates $X$ and $X \cup Y$ $\Rho$-dominates $A \setminus X$.
\end{definition}

The next lemma shows that the previous definition behaves well with respect to $\equiv^\pm_{d, A}$.
\begin{lemma}\label{lem:dom:equiv}
	Let $\sigma^+, \sigma^-, \rho^+, \rho^- \subseteq \bN$ be finite or co-finite, 
	let $\Sigma = (\sigma^+, \sigma^-)$ and $\Rho = (\rho^+, \rho^-)$,
	and let $d = d(\Sigma, \Rho)$.
	Let $G$ be a digraph and let $A \subseteq V(G)$.
	Let $X \subseteq A$ and $Y, Y' \subseteq V(G) \setminus A$ such that $Y \equiv_{d, \overline{A}} Y'$.
	Then, $(X, Y)$ $(\Sigma, \Rho)$-dominates $A$ if and only if $(X, Y')$ $(\Sigma, \Rho)$-dominates $A$.
\end{lemma}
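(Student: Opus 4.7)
The proof will be by symmetry: since the hypothesis $Y \equiv^{\pm}_{d, \overline{A}} Y'$ is symmetric in $Y, Y'$, it suffices to show the forward direction. So I assume $(X, Y)$ $(\Sigma, \Rho)$-dominates $A$ and aim to show the same holds for $(X, Y')$.

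The plan is to fix an arbitrary $v \in A$ and check the membership conditions at $v$ for $(X, Y')$. Since $X \subseteq A$ and $Y, Y' \subseteq \overline{A}$ are disjoint, the relevant cardinalities decompose as
\[
\bigl|N^{\pm}(v) \cap (X \cup Y)\bigr| = \bigl|N^{\pm}(v) \cap X\bigr| + \bigl|N^{\pm}(v) \cap Y\bigr|,
\]
and analogously for $Y'$. The equivalence $Y \equiv^{\pm}_{d, \overline{A}} Y'$ applied to $v \in V(G) \setminus \overline{A} = A$ yields
\[
\min\{d, |N^+(v) \cap Y|\} = \min\{d, |N^+(v) \cap Y'|\}
\]
and the same identity for $N^-$. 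Writing $a := |N^{\pm}(v) \cap X|$, $b := |N^{\pm}(v) \cap Y|$, $b' := |N^{\pm}(v) \cap Y'|$, I need to show that $a + b$ and $a + b'$ lie in exactly the same one of $\sigma^\pm, \rho^\pm$.

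The key auxiliary observation, which I will isolate as a short claim, is: for any finite or co-finite $\mu \subseteq \bN$ with $d(\mu) \le d$ and any $a, b, b' \in \bN$ with $\min\{d, b\} = \min\{d, b'\}$, we have $a + b \in \mu \iff a + b' \in \mu$. The proof splits on whether $b < d$ or $b \ge d$: in the first case $b = b'$ and the conclusion is immediate; in the second, both $a + b$ and $a + b'$ are at least $d \ge d(\mu)$, so by definition of $d(\mu)$ both lie in $\mu$ precisely when $\mu$ is co-finite. Since $d = \max\{d(\sigma^+), d(\sigma^-), d(\rho^+), d(\rho^-)\}$, this claim applies uniformly to $\mu \in \{\sigma^+, \sigma^-, \rho^+, \rho^-\}$.

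Applying the claim once for each of $\sigma^+$, $\sigma^-$, $\rho^+$, $\rho^-$ at the vertex $v$, the membership condition that $(X, Y)$ satisfies at $v$ (depending on whether $v \in X$) transfers verbatim to $(X, Y')$. As $v \in A$ was arbitrary, $(X, Y')$ $(\Sigma, \Rho)$-dominates $A$. I do not expect any real obstacle here; the only subtle point is getting the indexing of $\equiv^{\pm}_{d, \overline{A}}$ correct, namely that it constrains neighborhoods of vertices in $V(G) \setminus \overline{A} = A$, which is exactly the set over which the $(\Sigma, \Rho)$-domination condition is checked.
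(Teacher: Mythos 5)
Your proof is correct and follows essentially the same route as the paper's: a case split on whether the $Y$-contribution to the relevant neighborhood count is below the threshold $d$ (forcing equality with the $Y'$-contribution) or at least $d$ (forcing both totals past $d(\mu)$, where membership in a finite or co-finite $\mu$ is determined). Your packaging of this as a single reusable claim about a generic $\mu$, and your split at $b < d$ versus $b \ge d$, is in fact slightly cleaner than the paper's inline split at $\le d$ versus $> d$, which glosses over the boundary case $b = d$ where the two counts need not be equal.
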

\begin{proof}
	Suppose $(X, Y)$ $(\Sigma, \Rho)$-dominates $A$.
	Let $v \in A \setminus X$.
	Since $Y \equiv_{d, \overline{A}}^- Y'$, we have that 
	\begin{align}
		\label{eq:1}
		\min\{d, \card{N^+(v) \cap Y}\} = \min\{d, \card{N^+(v) \cap Y'}\}.
	\end{align}
	If $\card{N^+(v) \cap Y} \le d$, then immediately by \eqref{eq:1} we have that
	$\card{N^+(v) \cap Y} = \card{N^+(v) \cap Y'}$.
	Therefore, $\card{N^+(v) \cap (X \cup Y)} = \card{N^+(v) \cap (X \cup Y')} \in \rho^+$.
	If $\card{N^+(v) \cap Y} > d$, then by the definition of the $d$-value
	we have that for all $n > d$, $n \in \rho^+$.
	By \eqref{eq:1}, this implies that $\card{N^+(v) \cap Y'} \in \rho^+$,
	and in particular that $\card{N^+(v) \cap (X \cup Y')} \in \rho^+$.
	Similarly we can show that $\card{N^-(v) \cap (X \cup Y')} \in \rho^-$,
	and so $X \cup Y'$ $\Rho$-dominates $A \setminus X$.
	Since $Y \equiv_{d, \overline{A}}^+ Y'$, we can use the same arguments to 
	show that for all $v \in X$, $\card{N^+(v) \cap (X \cup Y')} \in \sigma^+$ and $\card{N^-(v) \cap (X \cup Y')} \in \sigma^-$,
	and we conclude that $(X, Y')$ $(\Sigma, \Rho)$-dominates $A$.
	
	A symmetric argument yields the other direction, i.e., if $(X, Y')$ $(\Sigma, \Rho)$-dominates $A$
	then $(X, Y)$ $(\Sigma, \Rho)$-dominates $A$.
\end{proof}

We now turn to the definition of the table entries.
To describe an equivalence class $\calQ$ of $\equiv^\pm_{d, A}$
we use the $d$-bi-neighbohoods of its members.
Note that by \cref{obs:d-equiv:nbh},
the following notion of a description of an equivalence class
is well-defined.
\begin{definition}
	Let $G$ be a digraph, $A \subseteq V(G)$, and $d \in \bN$.
	For an equivalence class $\calQ$ of $\equiv^\pm_{d, A}$,
	its \emph{description}, denoted by $\desc(\calQ)$, 
	is the $d$-bi-neighborhood of all members of $\calQ$.
\end{definition}

As the table entries are indexed by equivalence classes of $\equiv^\pm_{d, A}$,
we use their descriptions as compact representations.
\begin{definition}
	Let $\sigma^+, \sigma^-, \rho^+, \rho^- \subseteq \bN$ be finite or co-finite, 
	let $\Sigma = (\sigma^+, \sigma^-)$, 
	$\Rho = (\rho^+, \rho^-)$, 
	and $d = d(\Sigma, \Rho)$.
	Let $\opt$ stand for $\min$ if we consider a minimization problem 
	and for $\max$ if we consider a maximization problem.
	Let $G$ be a digraph with branch decomposition $(\decT, \decf)$ and let $t \in V(\decT)$.
	For an equivalence class $\calQ_t$ of $\equiv^\pm_{d, V_t}$,
	and an equivalence class $\calQ_{\bar{t}}$ of $\equiv^\pm_{d, \bar{V_t}}$,
	we let:
	\begin{align*}
		\dptab_t[\desc(\calQ_t), \desc(\calQ_{\bar{t}})] = 
			\left\lbrace%
			\begin{array}{ll}
				\opt_{S \subseteq V_t} \card{S} \colon &S \in \calQ_t \mbox{ and for any } S_{\bar{t}} \in \calQ_{\bar{t}}\colon \\
						&(S, S_{\bar{t}}) \mbox{ $(\Sigma, \Rho)$-dominates } V_t  \\
					 \infty &\mbox{if } \opt = \min \mbox{ and no such $S$ exists} \\
					 -\infty &\mbox{if } \opt = \max \mbox{ and no such $S$ exists}
			\end{array}
			\right.
	\end{align*}
	We use the shorthand `$\dptab_t[\calQ_t, \calQ_{\bar{t}}]$' for `$\dptab_t[\desc(\calQ_t), \desc(\calQ_{\bar{t}})]$'.
\end{definition}

We first initialize the table entries for all $t \in V(\decT)$ as follows.
We use the algorithm of \cref{lem:enum:desc}
to enumerate all descriptions of equivalence classes $\calQ_t$ of $\equiv^\pm_{d, V_t}$
and $\calQ_{\bar{t}}$ of $\equiv^\pm_{d, \bar{V_t}}$
and we let:
\begin{align}
	\label{eq:sigma-rho:init}
	\dptab_t[\calQ_t, \calQ_{\bar{t}}] = \left\lbrace%
		\begin{array}{ll}%
			-\infty, &\mbox{ if } \opt = \max, \mbox{ and} \\
			\infty, &\mbox{ if } \opt = \min
		\end{array}
		\right.
\end{align}

\paragraph*{Leaves of $\decT$.} 
For a leaf $\ell \in V(\decT)$, let $v \in V(G)$ be such that $\decf(v) = \ell$.
Clearly, $\equiv_{d, \{v\}}$ has only two equivalence classes, 
namely the one containing $\emptyset$ and the one containing $\{v\}$.
For each equivalence class $\calQ$ of $\equiv_{d, V(G) \setminus \{v\}}$, 
let $R \in \calQ$ which we can assume is given to us by \cref{lem:enum:desc}.
\begin{itemize}
	\item If $\card{N^+(v) \cap R} \in \sigma^+$ and $\card{N^-(v) \cap R} \in \sigma^-$,
		then $\dptab_\ell[\{\{v\}\}, \calQ] = 1$.
	\item If $\card{N^+(v) \cap R} \in \rho^+$ and $\card{N^-(v) \cap R} \in \rho^-$, 
		then $\dptab_\ell[\{\emptyset\}, \calQ] = 0$.
\end{itemize}

Before we proceed with the description of the algorithm updating the table entries at internal nodes, we give one more auxiliary observation.
\begin{observation}\label{obs:eqc:triples}
	Let $d \in \bN$,
	and let $G$ be a digraph with a $3$-partition $(A, B, W)$ of $V(G)$.
	For each equivalence class $\calQ_a$ of $\equiv^{\pm}_{d, A}$ 
	and each equivalence class $\calQ_b$ of $\equiv^{\pm}_{d, B}$,
	the following holds.
	There is an equivalence class $\calQ$ of $\equiv^{\pm}_{d, A \cup B}$
	such that for all $R_a \in \calQ_a$ and $R_b \in \calQ_b$,
	$R_a \cup R_b \in \calQ$.
	Moreover, given a description of $\calQ_a$ and a description of $\calQ_b$,
	we can compute a description of $\calQ$ in time $\calO(\card{W})$.
\end{observation}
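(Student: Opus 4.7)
The plan is to fix arbitrary representatives $R_a \in \calQ_a$ and $R_b \in \calQ_b$, set $R \defeq R_a \cup R_b$, and let $\calQ$ be the equivalence class of $\equiv^\pm_{d, A \cup B}$ that contains $R$. Both assertions of the observation will follow at once if I give an explicit formula for $\desc(\calQ)$ in terms of $\desc(\calQ_a)$ and $\desc(\calQ_b)$ whose output is independent of the chosen representatives.

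For each $v \in W = V(G) \setminus (A \cup B)$, let $\alpha^+_v, \alpha^-_v$ denote the entries of $\desc(\calQ_a)$ at $v$, and let $\beta^+_v, \beta^-_v$ denote the entries of $\desc(\calQ_b)$ at $v$. I claim that the out- and in-entries of $\desc(\calQ)$ at $v$ equal $\min\{d, \alpha^+_v + \beta^+_v\}$ and $\min\{d, \alpha^-_v + \beta^-_v\}$ respectively. For the out-entry, since $A \cap B = \emptyset$ we have $\card{N^-(v) \cap R} = \card{N^-(v) \cap R_a} + \card{N^-(v) \cap R_b}$. If both summands are strictly less than $d$, then by definition $\alpha^+_v = \card{N^-(v) \cap R_a}$ and $\beta^+_v = \card{N^-(v) \cap R_b}$ exactly, so $\min\{d, \card{N^-(v) \cap R}\} = \min\{d, \alpha^+_v + \beta^+_v\}$. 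Otherwise at least one summand is at least $d$: the left-hand side becomes $d$ because $\card{N^-(v) \cap R} \ge d$, while the right-hand side becomes $d$ because the corresponding $\alpha^+_v$ or $\beta^+_v$ already equals $d$ and is added to a non-negative value. The in-entries are handled by the symmetric argument applied to $\equiv^-$.

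Since the displayed formula depends only on $\desc(\calQ_a)$ and $\desc(\calQ_b)$, every choice of $R_a \in \calQ_a$ and $R_b \in \calQ_b$ produces the same $d$-bi-neighborhood for $R_a \cup R_b$; by \cref{obs:d-equiv:nbh} all such unions lie in a common equivalence class, which is the desired $\calQ$. Evaluating the formula requires $\calO(1)$ arithmetic operations for each $v \in W$, yielding total runtime $\calO(\card{W})$. I do not anticipate any substantial obstacle; the only delicate point is the saturation case $\alpha^+_v = d$, where the stored value only bounds the true count from below, but the outer $\min\{d, \cdot\}$ absorbs any slack and keeps the formula sound.
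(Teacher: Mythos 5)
Your proof is correct and follows essentially the same route as the paper, which computes the new description coordinatewise on $W$ as the sum of the two given descriptions. In fact your treatment is slightly more careful than the paper's one-line formula $D(v) = D_a(v) + D_b(v)$, since you make the capping $\min\{d,\cdot\}$ and the saturation case explicit, which is needed for the formula to be literally correct.
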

\begin{proof}
    The first statement follows immediately from the definitions.
    For the second statement, let $D = \desc(\calQ)$, $D_a = \desc(\calQ_a)$, and $D_b = \desc(\calQ_b)$. Then, for all $v \in W$, we have $D(v) = D_a(v) + D_b(v)$.
\end{proof}

\paragraph*{Internal nodes of $\decT$.}
Let $t \in V(\decT)$ be an internal node with children $a$ and $b$.
\begin{enumerate}[label={\arabic*.},ref={\arabic*}]
	\item Consider each triple $\calQ_a, \calQ_b, \calQ_{\bar{t}}$ of equivalence classes of
		$\equiv^\pm_{d, V_a}$, $\equiv^\pm_{d, V_b}$, and $\equiv^\pm_{d, \bar{V_t}}$, respectively.
	\item Let $R_a \in \calQ_a$, $R_b \in \calQ_b$, and $R_{\bar{t}} \in \calQ_{\bar{t}}$. 
		Determine:
		\begin{itemize}
			\item $\calQ_{\bar{a}}$, the equivalence class of $\equiv^\pm_{d, \bar{V_a}}$ containing $R_b \cup R_{\bar{t}}$.
			\item $\calQ_{\bar{b}}$, the equivalence class of $\equiv^\pm_{d, \bar{V_b}}$ containing $R_a \cup R_{\bar{t}}$.
			\item $\calQ_{t}$, the equivalence class of $\equiv^\pm_{d, V_t}$ containing $R_a \cup R_b$.
		\end{itemize}
	\item Update $\dptab_t[\calQ_t, \calQ_{\bar{t}}] = \opt\{\dptab_t[\calQ_t, \calQ_{\bar{t}}], 
			\dptab_a[\calQ_a, \calQ_{\bar{a}}] + \dptab_b[\calQ_b, \calQ_{\bar{b}}]\}$.
\end{enumerate}

The next two lemmas establish the correctness of the above algorithm.
\begin{lemma}\label{lem:sigma-rho:3parts}
	Let $\Sigma, \Rho$ be as above.
	Let $G$ be a digraph and let $(A, B, W)$ be a $3$-partition of $V(G)$.
	Let $S_a \subseteq A$, $S_b \subseteq B$, and $S_w \subseteq W$.
	Then $(S_a, S_b \cup S_w)$ $(\Sigma, \Rho)$-dominates $A$ and
	$(S_b, S_a \cup S_w)$ $(\Sigma, \Rho)$-dominates $B$ if and only if 
	$(S_a \cup S_b, S_w)$ $(\Sigma, \Rho)$-dominates $A \cup B$.
\end{lemma}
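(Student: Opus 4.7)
The plan is to unfold the definition of $(\Sigma, \Rho)$-domination on both sides of the biconditional and use the partition structure to translate set-theoretic splittings into splittings of the corresponding conditions. Writing $S := S_a \cup S_b \cup S_w$ and noting $S_a \cup (S_b \cup S_w) = S_b \cup (S_a \cup S_w) = (S_a \cup S_b) \cup S_w = S$, the three statements read as follows. The first hypothesis says that $S$ $\Sigma$-dominates $S_a$ and $S$ $\Rho$-dominates $A \setminus S_a$; the second hypothesis says that $S$ $\Sigma$-dominates $S_b$ and $S$ $\Rho$-dominates $B \setminus S_b$; and the conclusion says that $S$ $\Sigma$-dominates $S_a \cup S_b$ and $S$ $\Rho$-dominates $(A \cup B) \setminus (S_a \cup S_b)$.

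The key elementary set identity is that, because $S_a \subseteq A$, $S_b \subseteq B$, and $A \cap B = \emptyset$, we have
\[
(A \cup B) \setminus (S_a \cup S_b) \;=\; (A \setminus S_a) \cup (B \setminus S_b).
\]
Once this is in hand, both directions are immediate from the fact that $\Mu$-domination is a vertex-wise property: a set $X$ $\Mu$-dominates a union $U_1 \cup U_2$ if and only if $X$ $\Mu$-dominates $U_1$ and $X$ $\Mu$-dominates $U_2$.

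For the forward direction, combining the two $\Sigma$-domination statements yields that $S$ $\Sigma$-dominates $S_a \cup S_b$, and combining the two $\Rho$-domination statements together with the identity above yields that $S$ $\Rho$-dominates $(A \cup B) \setminus (S_a \cup S_b)$. For the reverse direction, the conclusion gives that $S$ $\Sigma$-dominates $S_a \cup S_b$, which restricts to $\Sigma$-domination of each of $S_a$ and $S_b$, and that $S$ $\Rho$-dominates $(A \setminus S_a) \cup (B \setminus S_b)$, which restricts to $\Rho$-domination of each of $A \setminus S_a$ and $B \setminus S_b$.

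There is no real obstacle; the only thing that needs care is the disjointness of $A$ and $B$ used to derive the displayed identity, since without it $(A \cup B) \setminus (S_a \cup S_b)$ need not coincide with $(A \setminus S_a) \cup (B \setminus S_b)$.
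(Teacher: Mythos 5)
Your proof is correct and follows essentially the same route as the paper's: set $S = S_a \cup S_b \cup S_w$, unfold the definition of pair-domination, and split/combine the $\Sigma$- and $\Rho$-domination conditions over the unions (the paper leaves the identity $(A \cup B) \setminus (S_a \cup S_b) = (A \setminus S_a) \cup (B \setminus S_b)$ implicit, which you rightly make explicit along with the disjointness of $A$ and $B$ it relies on).
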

\begin{proof}
	Suppose $(S_a, S_b \cup S_w)$ $(\Sigma, \Rho)$-dominates $A$ and
	$(S_b, S_a \cup S_w)$ $(\Sigma, \Rho)$-dominates $B$;
	let $S = S_a \cup S_b \cup S_w$.
	Then, $S$ $\Sigma$-dominates $S_a$ and $S_b$, and therefore $S$ $\Sigma$-dominates $S_a \cup S_b$.
	Moreover, $S$ $\Rho$-dominates $A \setminus S_a$ and $B \setminus S_b$,
	so $S$ $\Rho$-dominates $(A \cup B) \setminus (S_a \cup S_b)$
	which yields that $(S_a \cup S_b, S_w)$ $(\Sigma, \Rho)$-dominates $A \cup B$.
	The other direction follows similarly.
\end{proof}

\begin{lemma}\label{lem:sigma-rho:correctness}
	For each node $t \in V(\decT)$, 
	the table entries in $\dptab_t$ are computed correctly.
\end{lemma}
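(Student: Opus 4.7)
I would prove the lemma by induction on the height of $t$ in $\decT$, with the base case being the leaves and the inductive step handling internal nodes by combining the (correct) tables of their two children. Throughout, we use $d$-bi-neighborhoods as the canonical names of equivalence classes so that looking up and updating entries is unambiguous.

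\emph{Base case.} For a leaf $\ell$ with $\decf^{-1}(\ell) = v$, the equivalence relation $\equiv^\pm_{d, \{v\}}$ has only the two classes $\{\emptyset\}$ and $\{\{v\}\}$. For each class $\calQ$ of $\equiv^\pm_{d, V(G) \setminus \{v\}}$, pick a representative $R \in \calQ$. Then for any $R' \in \calQ$ we have $\card{N^+(v) \cap R'} = \card{N^+(v) \cap R}$ and $\card{N^-(v) \cap R'} = \card{N^-(v) \cap R}$ (the definition of $\equiv^\pm_{d, \bar{\{v\}}}$ applied to the single vertex $v$), so the pairs $(\{v\}, R')$ and $(\emptyset, R')$ $(\Sigma, \Rho)$-dominate $\{v\}$ exactly when the conditions in the initialization step are met. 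Combined with the initialization to $\pm\infty$ in \eqref{eq:sigma-rho:init}, this matches the definition of $\dptab_\ell$.

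\emph{Inductive step (internal node $t$ with children $a, b$).} Assume $\dptab_a$ and $\dptab_b$ are correct. I establish both bounds separately. For the \emph{soundness} direction, any finite value written into $\dptab_t[\calQ_t, \calQ_{\bar t}]$ by the combine step came from some triple $(\calQ_a, \calQ_b, \calQ_{\bar t})$, and by inductive hypothesis there exist $S_a \in \calQ_a$, $S_b \in \calQ_b$ realizing the corresponding entries with $(S_a, R_{\bar a})$ and $(S_b, R_{\bar b})$ $(\Sigma, \Rho)$-dominating $V_a$ and $V_b$ for any representatives $R_{\bar a} \in \calQ_{\bar a}$, $R_{\bar b} \in \calQ_{\bar b}$. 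Taking any $R_{\bar t} \in \calQ_{\bar t}$, the class $\calQ_{\bar a}$ computed by the algorithm contains $S_b \cup R_{\bar t}$, so \cref{lem:dom:equiv} lifts the domination to $(S_a, S_b \cup R_{\bar t})$; symmetrically for $(S_b, S_a \cup R_{\bar t})$. Applying \cref{lem:sigma-rho:3parts} to the partition $(V_a, V_b, \bar{V_t})$ then gives that $(S_a \cup S_b, R_{\bar t})$ $(\Sigma, \Rho)$-dominates $V_t$, and $S_a \cup S_b \in \calQ_t$ by \cref{obs:eqc:triples}.

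For the \emph{completeness} direction, suppose $S \in \calQ_t$ witnesses the definition with $|S| = k$. Set $S_a := S \cap V_a$, $S_b := S \cap V_b$, and let $\calQ_a, \calQ_b$ be their classes under $\equiv^\pm_{d, V_a}, \equiv^\pm_{d, V_b}$, respectively; by \cref{obs:eqc:triples}, $S_a \cup S_b \in \calQ_t$. Fix any $R_{\bar t} \in \calQ_{\bar t}$; the hypothesis gives that $(S_a \cup S_b, R_{\bar t})$ $(\Sigma, \Rho)$-dominates $V_t$, so by \cref{lem:sigma-rho:3parts} the pair $(S_a, S_b \cup R_{\bar t})$ dominates $V_a$ and $(S_b, S_a \cup R_{\bar t})$ dominates $V_b$. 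Letting $\calQ_{\bar a}$ and $\calQ_{\bar b}$ be the classes the algorithm computes for the triple $(\calQ_a, \calQ_b, \calQ_{\bar t})$, \cref{lem:dom:equiv} shows that the domination statements above extend to any representative of those classes; by induction, $\dptab_a[\calQ_a, \calQ_{\bar a}] \le |S_a|$ and $\dptab_b[\calQ_b, \calQ_{\bar b}] \le |S_b|$ (with reversed inequalities when $\opt = \max$). When the combine step processes this triple it therefore writes a value at most $|S_a| + |S_b| = k$ into $\dptab_t[\calQ_t, \calQ_{\bar t}]$.

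\emph{Anticipated obstacle.} The only delicate point is keeping the ``expectation from the outside'' consistent across the three cuts $V_a, V_b, V_t$: when we pick $R_{\bar t} \in \calQ_{\bar t}$ and want to invoke the inductive hypothesis at $a$, the expectation seen by $V_a$ is $S_b \cup R_{\bar t}$, which is a specific set rather than a canonical representative of $\calQ_{\bar a}$. This is exactly what \cref{lem:dom:equiv} is designed for, and combining it with \cref{obs:eqc:triples} (to track how classes merge when we fuse $V_a$ and $V_b$) closes the gap. The argument is symmetric for $\opt = \min$ and $\opt = \max$, so no further case analysis is needed.
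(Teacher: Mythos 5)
Your proof is correct and follows essentially the same route as the paper's: induction on the height of $t$, with the two directions of the inductive step handled exactly as in the paper via \cref{lem:dom:equiv}, \cref{lem:sigma-rho:3parts}, and \cref{obs:eqc:triples} (the paper simply declares the leaf case ``immediate'' where you spell it out; note only that in your base case the correct invariant is $\min\{d,\card{N^+(v)\cap R'}\}=\min\{d,\card{N^+(v)\cap R}\}$ rather than equality of the raw cardinalities, which suffices since $d=d(\Sigma,\Rho)$).
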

\begin{proof}
	We prove the lemma by induction on the height of $t$.
	In the base case, $t$ is a leaf. 
	Correctness in this case is immediate.
	Suppose $t$ is an internal node with children $a$ and $b$.
	
	For the first direction, assume that $\dptab_t[\calQ_t, \calQ_{\bar{t}}] = k$
	for an equivalence class $\calQ_t$ of $\equiv^\pm_{d, V_t}$ 
	and an equivalence class $\calQ_{\bar{t}}$ of $\equiv^\pm_{d, \bar{V_t}}$.
	We show that in this case,
	there is a set $S \in \calQ_t$ of size $k$ 
	such that for all $S_{\bar{t}} \in \calQ_t$, 
	$(S, S_{\bar{t}})$
	$(\Sigma, \Rho)$-dominates $V_t$.
	By the update of the internal nodes
	and by the induction hypothesis,
	there are equivalence classes $\calQ_a$ of $\equiv^\pm_{d, V_a}$, $\calQ_{\bar{a}}$ of $\equiv^\pm_{d, \bar{V_a}}$,
	$\calQ_b$ of $\equiv^\pm_{d, V_b}$, and $\calQ_{\bar{b}}$ of $\equiv^\pm_{d, \bar{V_b}}$ such that:
	There exist $S_a \in \calQ_a$, $S_b \in \calQ_b$ with $\card{S_a} + \card{S_b} = k$
	such that for all $S_{\bar{a}} \in \calQ_{\bar{a}}$ and $S_{\bar{b}} \in \calQ_{\bar{b}}$,
	$(S_a, S_{\bar{a}})$ $(\Sigma, \Rho)$-dominates $V_a$,
	and $(S_b, S_{\bar{b}})$ $(\Sigma, \Rho)$-dominates $V_b$.
	Additionally, 
	$S_a \cup R_{\bar{t}} \in \calQ_{\bar{b}}$ and
	$S_b \cup R_{\bar{t}} \in \calQ_{\bar{a}}$,
	where $R_{\bar{t}} \in \calQ_{\bar{t}}$.
	Using \cref{lem:dom:equiv},
	we conclude that $(S_a, S_b \cup R_{\bar{t}})$ $(\Sigma, \Rho)$-dominates $V_a$
	and that $(S_b, S_a \cup R_{\bar{t}})$ $(\Sigma, \Rho)$-dominates $V_b$.
	\cref{lem:sigma-rho:3parts}
	yields that $(S_a \cup S_b, R_{\bar{t}})$ $(\Sigma, \Rho)$-dominates $V_a \cup V_b = V_t$.
	
	For the other direction, suppose that $\opt = \min$ and note that the case of $\opt = \max$ is analogous.
	We have to show for every pair of an equivalence class $\calQ_t$ of $\equiv^\pm_{d, V_t}$ 
	and an equivalence class $\calQ_{\bar{t}}$ of $\equiv^\pm_{d, \bar{V_t}}$, 
	and $R_{\bar{t}} \in \calQ_{\bar{t}}$
	that if there exists some $S_t \in \calQ_t$ of size at most $k$
	such that $(S_t, R_{\bar{t}})$ $(\Sigma, \Rho)$-dominates $V_t$,
	then $\dptab_t[\calQ_t, \calQ_{\bar{t}}] \le \card{S_t}$.
	Let $S_a \defeq S_t \cap V_a$ and $S_b \defeq S_t \cap V_b$.
	At some point, the algorithm considered the equivalence classes $\calQ_a$ of $\equiv^\pm_{d, V_a}$
	and $\calQ_b$ of $\equiv^\pm_{d, V_b}$
	such that $S_a \in \calQ_a$ and $S_b \in \calQ_b$.
	Since $(S_a \cup S_b, R_{\bar{t}})$ $(\Sigma, \Rho)$-dominates $V_a \cup V_b$,
	it follows from \cref{lem:sigma-rho:3parts} that 
	$(S_a, S_b \cup R_{\bar{t}})$ $(\Sigma, \Rho)$-dominates $V_a$.
	Note that by the above algorithm, $S_b \cup R_{\bar{t}} \in \calQ_{\bar{a}}$,
	so by the induction hypothesis we have that 
	$\dptab_a[\calQ_a, \calQ_{\bar{a}}] \le \card{S_a}$.
	Similarly we can deduce that $\dptab_b[\calQ_b, \calQ_{\bar{b}}] \le \card{S_b}$.
	Clearly, $S_a \cup S_b = S_t \in \calQ_t$,
	so the algorithm above guarantees that $\dptab_t[\calQ_t, \calQ_{\bar{t}}] \le \card{S_t}$.
\end{proof}

The algorithm described above results in the following theorem which is the first main result of this section.
\begin{theorem}\label{thm:sigma-rho}
	Let $\sigma^+, \sigma^-, \rho^+, \rho^- \subseteq \bN$ be finite or co-finite,
	$\Sigma = (\sigma^+, \sigma^-)$, $\Rho = (\rho^+, \rho^-)$, and $d = d(\Sigma, \Rho)$.
	There is an algorithm that given a digraph $G$ on $n$ vertices
	together with one of its branch decompositions $(\decT, \decf)$,
	computes and optimum-size $(\Sigma, \Rho)$-dominating set in time 
	$\calO(\nec_d(\decT, \decf)^3 \cdot n^3\log n)$.
	For $n \le \nec_d(\decT, \decf)$,
	the algorithm runs in time $\calO(\nec_d(\decT, \decf)^3 \cdot n^2)$.
\end{theorem}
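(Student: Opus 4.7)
The plan is to verify correctness of the dynamic programming already described above the statement, and then turn to runtime accounting; all the semantic heavy lifting will be handled by lemmas already in place.

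For correctness, I would root $\decT$ at the degree-two node $r$ so that $V_r = V(G)$ and $\bar{V_r} = \emptyset$. Then $\equiv^\pm_{d, \bar{V_r}}$ has a single equivalence class $\{\emptyset\}$, and a pair $(S, \emptyset)$ $(\Sigma, \Rho)$-dominates $V(G)$ exactly when $S$ is a $(\Sigma, \Rho)$-dominating set of $G$. Applying Lemma~\ref{lem:sigma-rho:correctness} at $r$ therefore shows that $\opt_{\calQ} \dptab_r[\calQ, \{\emptyset\}]$, where $\calQ$ ranges over the equivalence classes of $\equiv^\pm_{d, V(G)}$ enumerated during preprocessing, equals the optimum cardinality of a $(\Sigma, \Rho)$-dominating set in $G$; the value is $\pm\infty$ exactly when none exists, by the initialization in~\eqref{eq:sigma-rho:init}. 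A witness can be produced by following standard back-pointers stored in the combine step.

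For the runtime I would split the work into a preprocessing phase and the main DP. Preprocessing invokes Lemma~\ref{lem:enum:desc} on each of the $\calO(n)$ cuts $(V_t, \bar{V_t})$ to enumerate every description of $\equiv^\pm_{d, V_t}$ and $\equiv^\pm_{d, \bar{V_t}}$ together with one representative per class, storing descriptions in a balanced binary search tree keyed lexicographically so that each lookup costs $\calO(n \log \nec_d(\decT, \decf))$. The enumeration cost is subsumed by the DP cost that follows. In the DP, each of the $\calO(n)$ internal nodes $t$ with children $a, b$ is processed by iterating over the $\calO(\nec_d(\decT, \decf)^3)$ triples $(\calQ_a, \calQ_b, \calQ_{\bar t})$. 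For each triple, the stored representatives let me use Observation~\ref{obs:eqc:triples} to compute descriptions of $\calQ_{\bar a}, \calQ_{\bar b}, \calQ_t$ in $\calO(n)$ time each, and three BST lookups return the relevant table entries in $\calO(n \log \nec_d(\decT, \decf))$ time. Summing yields $\calO(\nec_d(\decT, \decf)^3 \cdot n^2 \log \nec_d(\decT, \decf))$ for the DP. Using the trivial bound $\log \nec_d(\decT, \decf) \le n$ gives the claimed $\calO(\nec_d(\decT, \decf)^3 \cdot n^3 \log n)$; in the regime $n \le \nec_d(\decT, \decf)$ the logarithmic factor is absorbed into the polynomial in $\nec_d(\decT, \decf)$ to yield $\calO(\nec_d(\decT, \decf)^3 \cdot n^2)$.

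The only conceptually delicate step is already discharged by Lemma~\ref{lem:sigma-rho:correctness}, whose proof relies on Lemma~\ref{lem:dom:equiv} to guarantee that substituting an arbitrary representative $R_{\bar a}, R_{\bar b},$ or $R_{\bar t}$ for an arbitrary member of the corresponding equivalence class preserves the property of $(\Sigma, \Rho)$-dominating the relevant side of the cut. Granting that, the remainder of the argument is a routine induction on the height of $\decT$ together with the complexity bookkeeping described above.
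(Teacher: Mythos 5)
Your proposal follows essentially the same route as the paper: correctness is delegated to Lemma~\ref{lem:sigma-rho:correctness} (read off at the root, where $\bar{V_r}=\emptyset$ so $\equiv^\pm_{d,\bar{V_r}}$ has the single class $\{\emptyset\}$ and $\equiv^\pm_{d,V(G)}$ the single class $2^{V(G)}$, so your $\opt_{\calQ}$ and the paper's single entry $\dptab_r[2^{V(G)},\{\emptyset\}]$ coincide), and the runtime is obtained from Lemma~\ref{lem:enum:desc}, iteration over $\nec_d(\decT,\decf)^3$ triples per internal node, and Observation~\ref{obs:eqc:triples}.

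The one place where your accounting does not deliver the stated bound is the second runtime claim. You charge $\calO(n\log\nec_d(\decT,\decf))$ per table lookup, arriving at $\calO(\nec_d(\decT,\decf)^3\cdot n^2\log\nec_d(\decT,\decf))$ for the DP, and then assert that in the regime $n\le\nec_d(\decT,\decf)$ ``the logarithmic factor is absorbed into the polynomial in $\nec_d(\decT,\decf)$.'' That step fails: $\nec_d(\decT,\decf)^3 n^2\log\nec_d(\decT,\decf)$ is not $\calO(\nec_d(\decT,\decf)^3 n^2)$ --- absorbing the $\log$ would change the exponent of $\nec_d(\decT,\decf)$, not leave it at $3$. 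The paper instead charges $\calO(n)$ for the description computations of Observation~\ref{obs:eqc:triples} plus an $\calO(d\log n)$ overhead per table query, so the per-triple cost is $\calO(dn)$ and the internal-node total is $\calO(\nec_d(\decT,\decf)^3\cdot d\cdot n^2)$, which for constant $d$ gives the claimed $\calO(\nec_d(\decT,\decf)^3\cdot n^2)$ exactly. To repair your version you would need either the paper's tighter lookup bound or a data structure in which a table entry is addressed in time independent of $\nec_d(\decT,\decf)$ (e.g.\ indexing entries by their rank in the sorted enumeration produced during preprocessing). A second, smaller inaccuracy: the preprocessing cost $\calO(\nec_d(\decT,\decf)\log\nec_d(\decT,\decf)\cdot dn^3)$ is not in general subsumed by your DP cost (that would require $n\le\nec_d(\decT,\decf)^2$); it is, however, dominated by the first stated bound $\calO(\nec_d(\decT,\decf)^3\cdot n^3\log n)$, which is all that is needed there.
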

\begin{proof}
	We assume $\decT$ to be rooted in an arbitrary node $r \in V(\decT)$ of degree two,
	and do bottom-up dynamic programming along $\decT$.
	We first initialize the table entries at all nodes as described in Equation~\eqref{eq:sigma-rho:init}.
	At each node $t \in V(\decT)$, 
	we perform the update of all table entries as described above in the corresponding paragraph,
	depending on whether $t$ is a leaf or an internal node.
	We find the solution to the instance at hand at the table entry $\dptab_r[2^{V(G)}, \{\emptyset\}]$.
	
	Correctness of the algorithm follows from \cref{lem:sigma-rho:correctness}; 
	we now analyze its runtime. We may assume that $\card{V(\decT)} = \calO(n)$.
	By \cref{lem:enum:desc},
	we can compute all descriptions of the equivalence classes of all equivalence relations associated with the nodes of 
	$\decT$ in time at most 
	$\calO(\nec_d(\decT, \decf)\log \nec_d(\decT, \decf)\cdot n^3) = \calO(\nec_d(\decT, \decf)\cdot d\cdot n^3\log n)$.
	(Note that $\log\nec_d(\decT, \decf) \le d\log n$.)

	Initialization of all table entries takes time at most $\calO(\nec_d(\decT, \decf)^2\cdot n)$.
	Updating the entries at leaf nodes takes time $\calO(d)$ per node, so at most $\calO(d \cdot n)$ in total, for all leaf nodes.
	At each internal node, we consider triples of equivalence classes, 
	of which there are at most $\nec_d(\decT, \decf)^3$.
	Once such a triple is fixed, the remaining computations can be done in time $\calO(n)$ by \cref{obs:eqc:triples},
	with an overhead of $\calO(d \log n)$ for querying the table entries,
	using a binary search tree.
	Therefore, for each triple of equivalence classes, the update takes at most $\calO(n + d \log n) = \calO(d n)$ time;
	which implies that updating the table entries at all internal nodes 
	takes time at most $\calO(\nec_d(\decT, \decf)^3\cdot d\cdot n^2)$.
	Since $d$ is a constant, we can bound the runtime of the algorithm by
	\begin{align*}
		\max\{\calO(\nec_d(\decT, \decf) \cdot n^3\log n), \calO(\nec_d(\decT, \decf)^3 \cdot n^2)\}.
	\end{align*}
\end{proof}

\subsection*{Runtime in terms of bi-mim-width}
We now show how to express the runtime of the algorithm from \cref{thm:sigma-rho} as an \XP-runtime parameterized by the bi-mim-width of $(\decT, \decf)$,
in analogy with the case of undirected graphs~\cite{BelmonteV2013}. 
The crucial observation is the following.
\begin{observation}\label{obs:nec:mim}
	 For $d \in \bN$, a digraph $G$, and $A \subseteq V(G)$:
	 $\nec({\equiv^\pm_{d, A}}) \le n^{d \cdot \bimimval_G(A)}$.
\end{observation}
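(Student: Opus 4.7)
The plan is to decouple the bi-neighborhood equivalence into its two one-sided components and then apply the known undirected bound of Belmonte and Vatshelle~\cite{BelmonteV2013} separately to each side.

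First, I would observe that the equivalence $\equiv^\pm_{d, A}$ is the common refinement of $\equiv^+_{d, A}$ and $\equiv^-_{d, A}$: by definition, $X \equiv^\pm_{d, A} Y$ iff $X \equiv^+_{d, A} Y$ and $X \equiv^-_{d, A} Y$. Consequently, every equivalence class of $\equiv^\pm_{d, A}$ is the intersection of a class of $\equiv^+_{d, A}$ with a class of $\equiv^-_{d, A}$, so
\begin{align*}
    \nec(\equiv^\pm_{d, A}) \le \nec(\equiv^+_{d, A}) \cdot \nec(\equiv^-_{d, A}).
\end{align*}

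Next, I would bound each factor individually. The relation $\equiv^+_{d, A}$ depends only on the bipartite digraph $G[A \to \bar A]$; more precisely, $X \equiv^+_{d, A} Y$ is a condition expressed purely in terms of the edges from $A$ to $\bar A$, and is thus the same relation as the undirected $d$-neighborhood equivalence on the underlying bipartite graph of $G[A \to \bar A]$. Applying the Belmonte--Vatshelle bound (Lemma~2 in~\cite{BelmonteV2013}) to this bipartite graph yields
\begin{align*}
    \nec(\equiv^+_{d, A}) \le n^{d \cdot \nu(G[A \to \bar A])} = n^{d \cdot \mimval^+_G(A)},
\end{align*}
since induced matchings are orientation-insensitive. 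The same argument applied to $G[\bar A \to A]$ gives $\nec(\equiv^-_{d, A}) \le n^{d \cdot \mimval^-_G(A)}$.

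Combining these two estimates with the refinement inequality and using $\bimimval_G(A) = \mimval^+_G(A) + \mimval^-_G(A)$ yields
\begin{align*}
    \nec(\equiv^\pm_{d, A}) \le n^{d \cdot \mimval^+_G(A)} \cdot n^{d \cdot \mimval^-_G(A)} = n^{d \cdot \bimimval_G(A)},
\end{align*}
as required. I do not anticipate a genuine obstacle here: the only conceptual point is to notice that each one-sided relation is morally an undirected relation on a bipartite graph whose induced matching number equals the corresponding $\mimval^\pm$, after which the result is an immediate consequence of the existing undirected bound.
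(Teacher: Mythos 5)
Your proposal is correct and follows essentially the same route as the paper: split $\equiv^\pm_{d,A}$ into the one-sided relations $\equiv^+_{d,A}$ and $\equiv^-_{d,A}$, bound each by the Belmonte--Vatshelle argument via $n^{d\cdot\mimval^+_G(A)}$ and $n^{d\cdot\mimval^-_G(A)}$, and multiply. Your use of ``$\le$'' for the product of the two one-sided counts is if anything slightly more careful than the paper's stated equality.
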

\begin{proof}
	By the same arguments given in~\cite{BelmonteV2013} for the undirected case,
	we can show that 
	\begin{align*}
		\nec(\equiv_{d, A}^+) \le n^{d \cdot \mimval_G^+(A)} 
		\mbox{ and that }
		\nec(\equiv_{d, A}^-) \le n^{d \cdot \mimval_G^-(A)}.
	\end{align*}
	Therefore, 
	\begin{align*}
		\nec(\equiv^{\pm}_{d, A}) = &\nec(\equiv_{d, A}^+) \cdot \nec(\equiv_{d, A}^-)
			\le n^{d \cdot \mimval_G^+(A)} \cdot n^{d \cdot \mimval_G^-(A)}
%			= n^{d \cdot (\mimval_G^+(A) + \mimval_G^-(A))} 
			= n^{d \cdot \bimimval_G(A)}.
	\end{align*}
	%
	\iffalse
	\begin{align*}
		\nec(\equiv_{d, A}^+) \le n^{d \cdot \mimval(G[A \to \bar{A}])} 
		\mbox{ and that }
		\nec(\equiv_{d, A}^-) \le n^{d \cdot \mimval(G[\bar{A} \to A])}.
	\end{align*}
	Therefore, 
	\begin{align*}
		\nec(\equiv^{\pm}_{d, A}) = &\nec(\equiv_{d, A}^+) \cdot \nec(\equiv_{d, A}^-) \\
			&\le n^{d \cdot \mimval(G[A \to \bar{A}])} \cdot n^{d \cdot \mimval(G[\bar{A} \to A])} \\
			&= n^{d \cdot (\mimval(G[A \to \bar{A}]) + \mimval(G[\bar{A} \to A]))} = n^{d \cdot \bimimval(A)}.
	\end{align*}
	\fi
\end{proof}

\begin{corollary}\label{cor:sigma-rho:bimim}
	Let $\sigma^+, \sigma^-, \rho^+, \rho^- \subseteq \bN$ be finite or co-finite,
	$\Sigma = (\sigma^+, \sigma^-)$, $\Rho = (\rho^+, \rho^-)$, and $d = d(\Sigma, \Rho)$.
	Let $G$ be a digraph on $n$ vertices with branch decomposition $(\decT, \decf)$
	of bi-mim-width $w \ge 1$.
	There is an algorithm that given any such $G$ and $(\decT, \decf)$
	computes an optimum-size $(\Sigma, \Rho)$-dominating set in time 
	$\calO(n^{3dw + 2})$.
\end{corollary}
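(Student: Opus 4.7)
The plan is to combine Observation~\ref{obs:nec:mim} with the tighter runtime bound established in the proof of Theorem~\ref{thm:sigma-rho}, essentially by plugging in the upper bound on $\nec_d(\decT, \decf)$ in terms of bi-mim-width. First I would use Observation~\ref{obs:nec:mim}: for every edge-induced cut $(A, \bar{A})$ of $(\decT, \decf)$ we have $\bimimval_G(A) \le w$, so $\nec(\equiv^\pm_{d, A}) \le n^{d \cdot \bimimval_G(A)} \le n^{dw}$, and symmetrically for the cuts on the other side. Taking the maximum over all edges, this gives $\nec_d(\decT, \decf) \le n^{dw}$.

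Next I would invoke the algorithm of Theorem~\ref{thm:sigma-rho} without modification; the proof of that theorem actually establishes the sharper bound
\[
    \max\bigl\{\calO(\nec_d(\decT, \decf)\cdot n^3 \log n),\ \calO(\nec_d(\decT, \decf)^3 \cdot n^2)\bigr\}
\]
on the running time. Substituting $\nec_d(\decT, \decf) \le n^{dw}$ into both terms, the first becomes $\calO(n^{dw + 3}\log n)$, which is $\calO(n^{dw + 4})$ after absorbing the logarithmic factor, while the second becomes $\calO(n^{3dw + 2})$.

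The only thing that really needs checking is that the second term dominates. We may assume $d \ge 1$, since $d = 0$ forces $\sigma^+ = \sigma^- = \rho^+ = \rho^- = \bN$, making every vertex subset a $(\Sigma, \Rho)$-dominating set and the problem trivial. Combined with the hypothesis $w \ge 1$, we have $dw \ge 1$, so $3dw + 2 \ge dw + 4$, and the overall running time is $\calO(n^{3dw + 2})$ as claimed. I do not expect any genuine obstacle here; the whole argument is a direct specialization of the machinery already in place, and the only minor subtlety is handling the edge case $d = 0$ and confirming that the $\log n$ factor is dominated under the assumption $dw \ge 1$.
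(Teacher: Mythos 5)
Your argument is exactly the intended one: the paper states this as an immediate consequence of Theorem~\ref{thm:sigma-rho} and Observation~\ref{obs:nec:mim} without writing out a proof, and your substitution $\nec_d(\decT,\decf)\le n^{dw}$ into the theorem's runtime bound, together with the check that $3dw+2\ge dw+4$ when $dw\ge 1$, is correct. The aside on $d=0$ is a reasonable (and valid) way to dispose of that degenerate case.
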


\subsection{Directed Vertex Partitioning Problems}\label{sec:alg:lcvp}
We now show that the locally checkable vertex partitioning problems can be solved in \XP time paramterized by the bi-mim-width of a given branch decomposition.
In analogy with~\cite{Bui-XuanTV13}, we lift the $d$-bi-neighborhood equivalence to $q$-tuples over vertex sets, which allows for devising the desired dynamic programming algorithm.
We omit several technical details as they are very similar to the ones in the previous section.
We begin by recalling the definition of a bi-neighborhood constraint matrix.
\begin{definition}
	A \emph{bi-neighborhood-constraint} matrix is a $(q \times q)$-matrix $D_q$ 
	over pairs of finite or co-finite sets of natural numbers.
	Let $G$ be a digraph, and $\calX = (X_1, \ldots, X_q)$ be a $q$-partition of $V(G)$.
	We say that $\calX$ is a \emph{$D$-partition} if for all $i, j \in \{1, \ldots, q\}$
	with $D_q[i, j] = (\mu^+_{i, j}, \mu^-_{i, j})$, we have that for all $v \in X_i$,
	$\card{N^+(v) \cap X_j} \in \mu^+_{i, j}$ and $\card{N^-(v) \cap X_j} \in \mu^-_{i, j}$.
	The \emph{$d$-value} of $D_q$ is $d(D_q) = \max_{i, j}\{d(\mu^+_{i, j}), d(\mu^-_{i, j})\}$.
\end{definition}

\begin{definition}
	Let $G$ be a digraph and $A \subseteq V(G)$. 
	Two $q$-tuples of subsets of $A$, 
	$\calX = (X_1, \ldots, X_q)$ and $\calY = (Y_1, \ldots, Y_q)$, 
	are \emph{$d$-bi-neighborhood equivalent} w.r.t.~$A$, if
	\begin{align*}
		\forall i \forall v \in \bar{A} \colon 
			&\min\{d, \card{N^+(v) \cap X_i}\} = \min\{d, \card{N^+(v) \cap Y_i}\} \mbox{ and} \\
			&\min\{d, \card{N^-(v) \cap X_i}\} = \min\{d, \card{N^-(v) \cap Y_i}\}.
	\end{align*}
	In this case we write $\calX \equiv^\pm_{q, d, A} \calY$.
\end{definition}

\begin{observation}\label{obs:nec:q}
	Let $G$ be a digraph, $A \subseteq V(G)$, and let 
	$\calX = (X_1, \ldots, X_q) \in (2^A)^q$ and $\calY = (Y_1, \ldots, Y_q) \in (2^A)^q$.
	Then, $\calX \equiv^\pm_{q, d, A} \calY$ if and only if for all $i \in \{1, \ldots, q\}$,
	$X_i \equiv^\pm_{d, A} Y_i$.
	Therefore, $\nec(\equiv^\pm_{q, d, A}) = \nec(\equiv^\pm_{d, A})^q$.
\end{observation}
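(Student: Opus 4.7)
The plan is to prove this observation essentially by unfolding the definitions, since the $d$-bi-neighborhood equivalence on $q$-tuples was designed as a coordinate-wise lift of the $d$-bi-neighborhood equivalence on single sets.

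First I would establish the biconditional. Fix $G$, $A$, and the two $q$-tuples $\calX = (X_1,\ldots,X_q)$ and $\calY = (Y_1,\ldots,Y_q)$. Unfolding the definition of $\equiv^\pm_{q,d,A}$, the statement $\calX \equiv^\pm_{q,d,A} \calY$ is a conjunction over $i \in \{1,\ldots,q\}$ of the condition that for every $v \in \bar{A}$, both $\min\{d,\card{N^+(v)\cap X_i}\} = \min\{d,\card{N^+(v)\cap Y_i}\}$ and $\min\{d,\card{N^-(v)\cap X_i}\} = \min\{d,\card{N^-(v)\cap Y_i}\}$ hold. By the definition of $\equiv^\pm_{d,A}$, the condition indexed by a fixed $i$ is precisely $X_i \equiv^\pm_{d,A} Y_i$. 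So the biconditional is immediate from associativity of the two universal quantifiers over $i$ and over $v$.

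For the counting statement, I would use the biconditional just proved to construct an explicit bijection between the equivalence classes of $\equiv^\pm_{q,d,A}$ and $q$-tuples of equivalence classes of $\equiv^\pm_{d,A}$. Concretely, map the equivalence class of $(X_1,\ldots,X_q)$ under $\equiv^\pm_{q,d,A}$ to the $q$-tuple of equivalence classes $([X_1]_{\equiv^\pm_{d,A}}, \ldots, [X_q]_{\equiv^\pm_{d,A}})$. The biconditional guarantees that this map is well-defined (two representatives of the same $q$-tuple class produce the same tuple of singleton classes) and injective (two $q$-tuples giving the same tuple of singleton classes are equivalent in $\equiv^\pm_{q,d,A}$). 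Surjectivity is clear: any tuple of classes on the right-hand side arises by picking one representative from each class.

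There is no real obstacle here; the only thing to be careful about is not confusing the role of the direction of edges in the separate $\equiv^+$ and $\equiv^-$ definitions, but since we are using the combined $\equiv^\pm$ throughout and treating both directions symmetrically, this does not affect the argument. Counting gives $\nec(\equiv^\pm_{q,d,A}) = \nec(\equiv^\pm_{d,A})^q$, completing the proof.
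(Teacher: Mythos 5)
Your proposal is correct and matches the paper's treatment: the paper states this as an Observation with no written proof, regarding it as immediate from the fact that $\equiv^\pm_{q,d,A}$ is by definition the coordinate-wise conjunction of $\equiv^\pm_{d,A}$, which is exactly the unfolding you carry out. Your explicit bijection for the counting claim and your remark that the $+/-$ direction-swap in the single-set definitions is harmless under the combined relation are both accurate and simply make explicit what the paper leaves implicit.
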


For a $q$-tuple $\calX = (X_1, \ldots, X_q)$ of subsets of $A$,
the \emph{$(q, d)$-bi-neighborhood} of $\calX$ w.r.t.~$A$ is
$\nbhv^{\pm}_{q, d, A}(\calX) = (\nbhv^{\pm}_{d, A}(X_1), \ldots, \nbhv^{\pm}_{d, A}(X_q))$,
and we denote the set of all $(q, d)$-bi-neighborhoods w.r.t.~$A$ by $\calnbhv^\pm_{q, d, A}$.
By the previous observation, for any $A \subseteq V(G)$, 
there is a natural bijection between the $(q, d)$-bi-neighborhoods w.r.t.~$A$
and the equivalence classes of $\equiv^\pm_{q, d, A}$.
Furthermore, we can enumerate $\calnbhv^{\pm}_{q, d, A}$ by invoking the algorithm of \cref{lem:enum:desc}
$q$ times and then generating all $q$-tuples of $\calnbhv^{\pm}_{d, A}$.
\begin{corollary}\label{cor:enum:q:desc}
	Let $G$ be a digraph on $n$ vertices, $A \subseteq V(G)$, and $q, d \in \bN$ with $q \ge 2$.
	There is an algorithm that enumerates all members of $\calnbhv^\pm_{q, d, A}$ in time 
	$\calO(\nec(\equiv^\pm_{d, A})^q\cdot qdn^2)$.
	Furthermore, for each $\calY \in \calnbhv^\pm_{q, d, A}$,
	the algorithm provides some $\calX \in (2^A)^q$ with $\nbhv^\pm_{q, d, A}(\calX) = \calY$.
\end{corollary}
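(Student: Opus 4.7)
The plan is to reduce this to the single-set enumeration result of \cref{lem:enum:desc}, via the correspondence recorded in \cref{obs:nec:q} which asserts that $\calX \equiv^\pm_{q, d, A} \calY$ if and only if $X_i \equiv^\pm_{d, A} Y_i$ for all $i \in [q]$. That observation gives us a natural bijection between equivalence classes of $\equiv^\pm_{q, d, A}$ and $q$-tuples of equivalence classes of $\equiv^\pm_{d, A}$, which is exactly what we want to exploit.

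First I would call the algorithm from \cref{lem:enum:desc} once on $(G, A, d)$. This produces the set $\calnbhv^\pm_{d, A}$ together with, for each $Y \in \calnbhv^\pm_{d, A}$, a representative set $X(Y) \subseteq A$ with $\nbhv^\pm_{d, A}(X(Y)) = Y$. By \cref{lem:enum:desc} this step runs in time $\calO(\nec(\equiv^\pm_{d, A}) \log \nec(\equiv^\pm_{d, A}) \cdot d n^2)$. Next I would enumerate the cartesian product $(\calnbhv^\pm_{d, A})^q$: for each tuple $(Y_1, \ldots, Y_q)$, emit the vector $(Y_1, \ldots, Y_q)$ itself as a member of $\calnbhv^\pm_{q, d, A}$, and emit the $q$-tuple $(X(Y_1), \ldots, X(Y_q))$ as a witness $\calX$ whose $(q, d)$-bi-neighborhood equals $(Y_1, \ldots, Y_q)$.

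Correctness is immediate from \cref{obs:nec:q}: every equivalence class of $\equiv^\pm_{q, d, A}$ is described by some tuple of single-set $d$-bi-neighborhoods, and no two distinct tuples of single-set $d$-bi-neighborhoods describe the same class, so the enumeration is exhaustive and duplicate-free. For the runtime, note that by \cref{obs:nec:q} the outer loop performs $\nec(\equiv^\pm_{d, A})^q$ iterations, and each iteration writes out a $q$-tuple of vectors and of representatives in time $\calO(qn)$; combined with the initial call to \cref{lem:enum:desc}, and using that $\log \nec(\equiv^\pm_{d, A}) \le \nec(\equiv^\pm_{d, A}) \le \nec(\equiv^\pm_{d, A})^{q-1}$ for $q \ge 2$, the total time fits inside $\calO(\nec(\equiv^\pm_{d, A})^q \cdot q d n^2)$.

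There is no real obstacle here; the only care needed is in how one stores the representatives produced by \cref{lem:enum:desc} so that the lookup $Y \mapsto X(Y)$ takes time within the claimed bound, which is handled by the balanced-search-tree data structure already used in the proof of \cref{lem:enum:desc}.
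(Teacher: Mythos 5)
Your proposal is correct and matches the paper's argument: the paper likewise obtains $\calnbhv^\pm_{q,d,A}$ by invoking the algorithm of \cref{lem:enum:desc} (the paper says $q$ times, you say once, which is an inessential difference) and then generating all $q$-tuples of $\calnbhv^\pm_{d,A}$, with correctness resting on \cref{obs:nec:q}. Your runtime accounting, including the observation that the $\log\nec$ overhead of the single-set enumeration is absorbed into $\nec^q$ for $q\ge 2$, is sound.
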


\begin{definition}
	Let $D_q$ be a bi-neighborhood constraint matrix.
	Let $G$ be a digraph and $A \subseteq V(G)$.
	Let $\calX = (X_1, \ldots, X_q) \in (2^A)^q$ and $\calY = (Y_1, \ldots, Y_q) \in (2^{\bar{A}})^q$.
	We say that $(\calX, \calY)$ $D_q$-dominates $A$ if for all $i, j \in \{1, \ldots, q\}$,
	$X_i \cup Y_j$ $D_q[i, j]$-dominates $A$.
\end{definition}

For an equivalence class $\calQ$ of $\equiv^{\pm}_{q, d, A}$,
its \emph{description}, $\desc(\calQ)$, is the $(q, d)$-bi-neighborhood of all its members.
Again we index the table entries with descriptions of equivalence classes.
For a clearer presentation we will also here skip explicit mentions of the $\desc$-operator.
\begin{definition}
	Let $D_q$ be a bi-neighborhood constraint matrix
	with $d(D_q) = d$,
	and let $G$ be a digraph with branch decomposition $(\decT, \decf)$, and $t \in V(\decT)$.
	Let $\calQ_t$ be an equivalence class of $\equiv^\pm_{q, d, V_t}$ 
	and $\calQ_{\bar{t}}$ be an equivalence class of $\equiv^\pm_{q, d, \bar{V_t}}$.
	Then,
	\begin{align*}
		\dptab_t[\calQ_t, \calQ_{\bar{t}}] \defeq \left\lbrace
			\begin{array}{ll}
				\texttt{True}, &\mbox{if $\exists$ $q$-partition $\calS$ of $V_t$ such that:}\\
								&\mbox{$\calS \in \calQ_t$ and for all $\calR_{\bar{t}} \in \calQ_{\bar{t}}\colon 
									(\calS, \calR_{\bar{t}})$ $D_q$-dominates $V_t$;} \\
				\texttt{False}, &\mbox{otherwise.}
			\end{array}
			\right.
	\end{align*}
\end{definition}

By the definition of the table entries we have that $G$ has a $D_q$-partition 
if and only if some entry in $\dptab_r$ is true, where $r$ is the root of the
given branch decomposition of $G$.
We now describe the algorithm.
Initially, we set all table entries at all nodes to \texttt{False}.

\paragraph*{Leaves of $\decT$.}
If $\ell \in V(\decT)$ is a leaf of $\decT$, then let $v \in V(G)$ be such that $\decf(v) = \ell$.
We have to consider the following $q$ $q$-partitions of $\{v\}$ 
(recall that parts of a partition may be empty):
For $i \in \{1, \ldots, q\}$, we have to consider the partition $\calX_i = (X_1, \ldots, X_q)$ 
where $X_i = \{v\}$ and for $j \neq i$, $X_j = \emptyset$.
While these partitions are equal up to renaming, 
they might differ with respect to $D_q$.
We have that $\nbhv^{\pm}_{q, d, \{v\}}(\calX_i)$ 
is the all-zeroes vector in coordinates $j \neq i$,
and $\nbhv^{\pm}_{d, A}(\{v\})$ in coordinate $i$.
We denote the corresponding equivalence class of $\equiv^\pm_{q, d, \{v\}}$ by $\calQ_i$.
For each equivalence class $\calQ_{\bar{\ell}}$ of $\equiv^\pm_{q, d, V(G) \setminus \{v\}}$,
let $\calY = (Y_1, \ldots, Y_\ell)$ be one of its elements.
We then perform the following updates:
For all $j$, let $(\mu^+_{i, j}, \mu^-_{i, j}) = D_q[i, j]$; then
\begin{align*}
	\dptab_\ell[\calQ_i, \calQ_{\bar{\ell}}] = \mathtt{True} \mbox{ if }
		\forall j\colon \card{N^+(v) \cap Y_j} \in \mu^+_{i, j} \mbox{ and } \card{N^-(v) \cap Y_j} \in \mu_{i, j}^-.
\end{align*}

\newcommand\qunion{\bigcup_q}

In the following, for two $q$-tuples $(X_1, \ldots, X_q)$ and $(Y_1, \ldots, Y_q)$,
we denote their coordinate-wise union as 
$(X_1, \ldots, X_q) \qunion (Y_1, \ldots, Y_q) = (X_1 \cup Y_1, \ldots, X_q \cup Y_q)$.

\paragraph*{Internal nodes of $\decT$.}
Let $t \in V(\decT)$ be an internal node with children $a$ and $b$.
\begin{enumerate}[label={\arabic*.},ref={\arabic*}]
	\item Consider each triple $\calQ_a$, $\calQ_b$, $\calQ_{\bar{t}}$ of equivalence classes of 
		$\equiv^\pm_{q, d, V_a}$, $\equiv^\pm_{q, d, V_b}$, and $\equiv^\pm_{q, d, \bar{V_t}}$, respectively.
	\item Let $R_a \in \calQ_a$, $R_b \in \calQ_b$, and $R_{\bar{t}} \in \calQ_{\bar{t}}$. Determine:
	\begin{itemize}
		\item $\calQ_{\bar{a}}$, the equivalence class of $\equiv^{\pm}_{q, d, \bar{V_a}}$ containing $R_b \qunion R_{\bar{t}}$.
		\item $\calQ_{\bar{b}}$, the equivalence class of $\equiv^{\pm}_{q, d, \bar{V_b}}$ containing $R_a \qunion R_{\bar{t}}$.
		\item $\calQ_{t}$, the equivalence class of $\equiv^{\pm}_{q, d, V_t}$ containing $R_a \qunion R_b$.
	\end{itemize}
	\item If $\dptab_t[\calQ_t, \calQ_{\bar{t}}] = \mathtt{False}$,
		then update $\dptab_t[\calQ_t, \calQ_{\bar{t}}] = \dptab_a[\calQ_a, \calQ_{\bar{a}}] \wedge \dptab_b[\calQ_b, \calQ_{\bar{b}}]$.
\end{enumerate} 

Applying the arguments given in the proof of \cref{lem:sigma-rho:correctness} to each part of the corresponding partitions
yields the correctness of the resulting algorithm.
Also the running time can be analyzed in a similar way, 
using \cref{cor:enum:q:desc} and \cref{obs:nec:q}
to bound the complexity of enumerating all equivalence classes of 
the equivalence relations $\equiv^{\pm}_{q, d, \cdot}$.
We have the following theorem.
\begin{theorem}
	Let $D_q$ be a bi-neighborhood constraint matrix
	with $d = d(D_q)$.
	There is an algorithm that given a digraph $G$ on $n$ vertices
	together with one of its branch decompositions $(\decT, \decf)$,
	determines whether $G$ has a $D_q$-partition
	in time $\calO(\nec_d(\decT, \decf)^{3q} \cdot q\cdot n^3\log n)$.
	For $n \le \nec_d(\decT, \decf)$,
	the algorithm runs in time $\calO(\nec_d(\decT, \decf)^{3q} \cdot q\cdot n^2)$.
\end{theorem}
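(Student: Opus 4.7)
The plan is to verify correctness and runtime of the bottom-up dynamic programming algorithm already specified above, following the same pattern as the proof for $(\Sigma,\Rho)$-sets but lifted to $q$-tuples. The algorithm is run on $(\decT,\decf)$ rooted at an arbitrary degree-two node $r$, and the final answer is $\mathtt{True}$ iff some entry of $\dptab_r$ is $\mathtt{True}$.

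For correctness I would first prove two technical lemmas obtained by coordinate-wise lifting. The first is a $q$-tuple analog of \cref{lem:sigma-rho:3parts}: for any 3-partition $(A,B,W)$ of $V(G)$ and $q$-tuples $\calS_a\in(2^A)^q$, $\calS_b\in(2^B)^q$, $\calS_w\in(2^W)^q$, the pair $(\calS_a\qunion\calS_b,\calS_w)$ $D_q$-dominates $A\cup B$ iff $(\calS_a,\calS_b\qunion\calS_w)$ $D_q$-dominates $A$ and $(\calS_b,\calS_a\qunion\calS_w)$ $D_q$-dominates $B$. Writing $D_q[i,j]=(\mu_{i,j}^+,\mu_{i,j}^-)$, this decomposes over pairs $(i,j)\in\{1,\ldots,q\}^2$ into the statement of \cref{lem:sigma-rho:3parts} applied with $\Sigma=\Rho=D_q[i,j]$ to the single sets in coordinate $i$ and $j$. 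The second is a $q$-tuple analog of \cref{lem:dom:equiv}: whether $(\calX,\calY)$ $D_q$-dominates $A$ depends on $\calY$ only through its $\equiv^{\pm}_{q,d,\bar A}$-class. This again decomposes coordinate-wise via \cref{obs:nec:q} and \cref{lem:dom:equiv}.

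With these tools in hand, the correctness proof proceeds by induction on the height of $t\in V(\decT)$, mirroring \cref{lem:sigma-rho:correctness} with ``a set $S$ of size $k$'' replaced by ``a $q$-partition $\calS$'' and numerical optimization replaced by a $\mathtt{True}/\mathtt{False}$ existential flag. The leaf case is handled by the initialization, which explicitly enumerates the $q$ placements of the single vertex into one of the $q$ classes and checks the relevant row of $D_q$ against a representative of each equivalence class $\calQ_{\bar\ell}$. For an internal node $t$ with children $a,b$, the forward direction combines witness $q$-partitions at $a$ and $b$ using the extended 3-partitions lemma together with the extended equivalence-invariance lemma; the backward direction splits a witness $q$-partition $\calS$ of $V_t$ coordinate-wise into $\calS_a=\calS\qcap V_a$ and $\calS_b=\calS\qcap V_b$ and argues that the triple $(\calQ_a,\calQ_b,\calQ_{\bar t})$ containing $(\calS_a,\calS_b,R_{\bar t})$ is visited by the algorithm and validates the entry.

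For the runtime, \cref{cor:enum:q:desc} gives enumeration cost $O(\nec_d(\decT,\decf)^q\cdot qdn^2)$ per node, totaling $O(\nec_d(\decT,\decf)^q\cdot qdn^3)$ over the $O(n)$ nodes of $\decT$. At each internal node the update iterates over $\nec_d(\decT,\decf)^{3q}$ triples $(\calQ_a,\calQ_b,\calQ_{\bar t})$; for each triple, the $q$-fold analog of \cref{obs:eqc:triples} computes the descriptions of $\calQ_{\bar a},\calQ_{\bar b},\calQ_t$ in $O(qn)$ time, and indexing the tables (using balanced binary search trees keyed on descriptions) adds $O(qd\log n)$. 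Summing over internal nodes gives $O(\nec_d(\decT,\decf)^{3q}\cdot qn^3\log n)$, which collapses to $O(\nec_d(\decT,\decf)^{3q}\cdot qn^2)$ when $n\le\nec_d(\decT,\decf)$ since $d$ is constant. The main obstacle is purely bookkeeping: making sure that the coordinate-wise lifting simultaneously respects the row/column structure of $D_q$ (different constraints for different pairs $(i,j)$) and the partition condition (pairwise disjointness of the $X_i$), but both requirements reduce cleanly to the single-set setting already handled.
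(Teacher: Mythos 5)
Your proposal is correct and follows essentially the same route as the paper, which itself proves this theorem by lifting Lemmas~\ref{lem:sigma-rho:3parts}, \ref{lem:dom:equiv} and \ref{lem:sigma-rho:correctness} coordinate-wise to $q$-tuples and reusing Corollary~\ref{cor:enum:q:desc} and Observation~\ref{obs:nec:q} for the runtime. If anything, your write-up makes the coordinate-wise lifting and the bookkeeping of the $(i,j)$-indexed constraints more explicit than the paper does.
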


Combining the previous theorem with \cref{obs:nec:mim} gives the following
algorithms parameterized by the bi-mim-width of a given branch decomposition.
\begin{corollary}\label{cor:lcvp:bimim}
	Let $D_q$ be a bi-neighborhood constraint matrix
	with $d = d(D_q)$.
	Let $G$ be a digraph on $n$ vertices with branch decomposition $(\decT, \decf)$
	of bi-mim-width $w$.
	There is an algorithm that given any such $G$ and $(\decT, \decf)$
	decides whether $G$ has a $D_q$-partition
	in time $\calO(q\cdot n^{3qdw + 2})$.
\end{corollary}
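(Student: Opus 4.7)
The plan is to simply combine the preceding theorem, which bounds the running time of the dynamic programming algorithm in terms of $\nec_d(\decT, \decf)$, with Observation \ref{obs:nec:mim}, which translates this combinatorial quantity into the bi-mim-width of the given decomposition. No new algorithmic ingredient is needed; the work is purely a substitution argument.

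First, I would recall from Observation \ref{obs:nec:mim} that for every $A \subseteq V(G)$, one has $\nec(\equiv^\pm_{d, A}) \le n^{d \cdot \bimimval_G(A)}$. Since $(\decT, \decf)$ has bi-mim-width $w$, every cut $(A_e, \overline{A_e})$ induced by an edge $e \in E(\decT)$ satisfies $\bimimval_G(A_e) = \bimimval_G(\overline{A_e}) \le w$. Taking the maximum over all edges of $\decT$ in the definition of $\nec_d(\decT, \decf)$ gives
\[
    \nec_d(\decT, \decf) \;\le\; n^{dw},
\]
hence $\nec_d(\decT, \decf)^{3q} \le n^{3qdw}$.

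Next I would plug this bound into the runtime from the preceding theorem. In the regime $n \le \nec_d(\decT, \decf)$, the runtime $\calO(\nec_d(\decT, \decf)^{3q} \cdot q \cdot n^2)$ becomes $\calO(q \cdot n^{3qdw + 2})$, as claimed. In the complementary regime $\nec_d(\decT, \decf) < n$, the general bound $\calO(\nec_d(\decT, \decf)^{3q} \cdot q \cdot n^3 \log n)$ is dominated by $\calO(q \cdot n^{3q + 3} \log n)$, which is absorbed into $\calO(q \cdot n^{3qdw + 2})$ for $dw \ge 1$ (and the degenerate case $dw = 0$ trivializes the equivalence relation so that the problem is solved directly in polynomial time).

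The main obstacle is essentially nonexistent: all the real work, namely the design and correctness of the dynamic programming, the enumeration of descriptions via Lemma \ref{lem:enum:desc} and Corollary \ref{cor:enum:q:desc}, and the relationship between $\nec(\equiv^\pm_{d,A})$ and $\bimimval_G(A)$, has already been carried out. What remains is only the bookkeeping of combining the two bounds, exactly parallel to how Corollary \ref{cor:sigma-rho:bimim} was derived from Theorem \ref{thm:sigma-rho}.
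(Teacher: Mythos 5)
Your proposal is correct and follows exactly the paper's (one-line) derivation: substitute the bound $\nec_d(\decT,\decf)\le n^{dw}$ from Observation~\ref{obs:nec:mim} into the runtime of the preceding theorem, precisely as Corollary~\ref{cor:sigma-rho:bimim} is obtained from Theorem~\ref{thm:sigma-rho}. If anything, you are more careful than the paper in discussing the regime $\nec_d(\decT,\decf)<n$ (where, strictly speaking, a $\log n$ factor survives when $dw=1$, a slack the paper also glosses over), but this does not affect the XP claim.
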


\subsection{Distance-$r$ Variants}\label{sec:alg:distance-r}
We now turn to distance variants of all problems considered in this section so far. For instance, for $r \in \bN$, the \textsc{Distance-$r$ Dominating Set} problem asks for a minimum size set $S$ of vertices of a digraph $G$, such that each vertex in $V(G) \setminus S$ is at distance at most $r$ from a vertex in $S$.
Note that for $r = 1$, we recover the \textsc{Dominating Set} problem.
We can generalize all LCVS and LVCP problems to their distance-versions.

\begin{definition}
    Let $G$ be a digraph. 
    For $r \in \bN$, the \emph{$r$-out-ball} of a vertex $v \in V(G)$ is the set of vertices $B^+_r = \{w \in V(G) \mid \dist_G(v, w) \le r\}$,
    and the \emph{$r$-in-ball} of a vertex $v$ is the set of vertices $B^-_r(v) = \{u \in V(G) \mid \dist_G(u, v) \le r\}$.    
\end{definition}

In distance-$r$ versions of LCVS and LCVP problems, restrictions are posed on $B^+_r(v)$ instead of $N^+(v)$ and on $B^-_r(v)$ instead of $N^-(v)$.
\begin{definition}
    Let $r \in \bN$;
    let $\sigma^+, \sigma^-, \rho^+, \rho^-$ be finite or co-finite subsets of $\bN$, let $\Sigma = (\sigma^+, \sigma^-)$ and $\Rho = (\rho^+, \rho^-)$.
    Let $G$ be a digraph and $S \subseteq V(G)$. 
    We say that $S$ \emph{distance-$r$ $(\Sigma, \Rho)$-dominates $G$}, if:
   	\begin{align*}
		\forall v \in V(G) \colon \card{B_r^+(v) \cap S} \in \left\lbrace%
				\begin{array}{ll}
					\sigma^+, &\mbox{if } v \in S \\
					\rho^+, &\mbox{if } v \notin S
				\end{array}
				\right.
				\mbox{ and }~
				\card{B_r^-(v) \cap S} \in \left\lbrace%
				\begin{array}{ll}
					\sigma^-, &\mbox{if } v \in S \\
					\rho^-, &\mbox{if } v \notin S
				\end{array}
				\right.
	\end{align*}
\end{definition}

It is not difficult to see that a set $S \subseteq V(G)$ is a distance-$r$ $(\Sigma, \Rho)$-dominating set in $G$ if and only if $S$ is a $(\Sigma, \Rho)$-dominating set in $G^r$, the $r$-th power of $G$. Therefore, to solve \textsc{Distance-$r$ $(\Sigma, \Rho)$-Set} on $G$, we can simply compute $G^r$ and solve \textsc{$(\Sigma, \Rho)$-Set} on $G^r$.
By \cref{lem:powerbimim,cor:sigma-rho:bimim}, we have the following consequence.
\begin{corollary}\label{cor:dist:sigma-rho:bimim}
    Let $r \in \bN$;
	let $\sigma^+, \sigma^-, \rho^+, \rho^- \subseteq \bN$ be finite or co-finite,
	$\Sigma = (\sigma^+, \sigma^-)$, $\Rho = (\rho^+, \rho^-)$, and $d = d(\Sigma, \Rho)$.
	Let $G$ be a digraph on $n$ vertices with branch decomposition $(\decT, \decf)$
	of bi-mim-width $w \ge 1$.
	There is an algorithm that given any such $G$ and $(\decT, \decf)$
	computes an optimum-size distance-$r$ $(\Sigma, \Rho)$-dominating set in time 
	$\calO(n^{3drw + 2})$.
\end{corollary}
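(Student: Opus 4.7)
The plan is to reduce the distance-$r$ variant to the non-distance variant on the $r$-th power of the input digraph, and then invoke the algorithm we already developed together with the bi-mim-width bound on powers of digraphs.

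First, I would verify the observation already flagged in the text: a set $S \subseteq V(G)$ is a distance-$r$ $(\Sigma, \Rho)$-dominating set in $G$ if and only if $S$ is a $(\Sigma, \Rho)$-dominating set in $G^r$. This follows directly from the definitions, since by the definition of $G^r$ we have $B^+_r(v) \setminus \{v\} = N^+_{G^r}(v)$ and $B^-_r(v) \setminus \{v\} = N^-_{G^r}(v)$ for every $v \in V(G)$, so the local constraints coincide (whether $v$ belongs to its own $r$-ball or not affects the counts by a constant shift that is absorbed in the choice of $\Sigma, \Rho$, and the standard convention in both definitions is consistent).

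Next, given $G$ on $n$ vertices, the digraph $G^r$ can be computed in polynomial time (for instance by computing all-pairs shortest paths of length at most $r$). Crucially, $G^r$ has the same vertex set as $G$, so the same branch decomposition $(\decT, \decf)$ of $G$ is also a branch decomposition of $G^r$. By Lemma~\ref{lem:powerbimim}, the bi-mim-width of $(\decT, \decf)$ as a branch decomposition of $G^r$ is at most $rw$.

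Finally, I would apply Corollary~\ref{cor:sigma-rho:bimim} to $G^r$ together with the branch decomposition $(\decT, \decf)$ of bi-mim-width at most $rw$, obtaining an optimum-size $(\Sigma, \Rho)$-dominating set of $G^r$ in time $\calO(n^{3d(rw) + 2}) = \calO(n^{3drw + 2})$. By the equivalence from the first step, this is an optimum-size distance-$r$ $(\Sigma, \Rho)$-dominating set of $G$. There is no substantial obstacle here; the only minor subtlety is to check that the power operation preserves the branch decomposition and its bi-mim-width bound (which is exactly Lemma~\ref{lem:powerbimim}) and that the definitional equivalence between distance-$r$ domination in $G$ and domination in $G^r$ holds verbatim for the $(\Sigma, \Rho)$ framework.
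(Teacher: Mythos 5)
Your proposal is correct and follows exactly the paper's argument: observe that $S$ is a distance-$r$ $(\Sigma,\Rho)$-dominating set of $G$ if and only if it is a $(\Sigma,\Rho)$-dominating set of $G^r$, note via Lemma~\ref{lem:powerbimim} that the given branch decomposition has bi-mim-width at most $rw$ for $G^r$, and apply Corollary~\ref{cor:sigma-rho:bimim}. The only point you handle more explicitly than the paper is the bookkeeping about whether $v$ lies in its own $r$-ball versus its $G^r$-neighborhood, which the paper subsumes under ``it is not difficult to see.''
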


\begin{definition}
    Let $D$ be a $(q \times q)$ bi-neighborhood-constraint matrix.
    Let $G$ be a digraph. 
    A $q$-partition $\calX = (X_1, \ldots, X_q)$ of $V(G)$ is a \emph{distance-$r$ $D$-partition} of $G$,
    if for all $i, j$, where $D[i, j] = (\mu^+_{i, j}, \mu^-_{i, j})$, we have that for all $v \in X_i$, $\card{B^+_r(v) \cap X_j} \in \mu^+_{i, j}$ and
    $\card{B^-_r(v) \cap X_j} \in \mu^-_{i, j}$.
\end{definition}

By similar reasoning as above and \cref{lem:powerbimim,cor:lcvp:bimim}, 
we have the following.
\begin{corollary}
	Let $r \in \bN$;
	let $D_q$ be a bi-neighborhood constraint matrix
	with $d = d(D_q)$.
	Let $G$ be a digraph on $n$ vertices with branch decomposition $(\decT, \decf)$
	of bi-mim-width $w$.
	There is an algorithm that given any such $G$ and $(\decT, \decf)$
	decides whether $G$ has a distance-$r$ $D_q$-partition
	in time $\calO(q\cdot n^{3qdrw + 2})$.
\end{corollary}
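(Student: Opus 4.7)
The plan is to reduce the distance-$r$ version of the LCVP problem on $G$ to the ordinary (distance-$1$) version on the $r$-th power $G^r$, in complete analogy with the reduction used just above for distance-$r$ LCVS problems (Corollary~\ref{cor:dist:sigma-rho:bimim}).

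First, I would observe that for every vertex $v \in V(G)$ and every $U \subseteq V(G)$, we have $B_r^+(v) \cap U = N_{G^r}^+(v) \cap U$ and $B_r^-(v) \cap U = N_{G^r}^-(v) \cap U$, directly from the definition of $G^r$. Consequently, a $q$-partition $\calX = (X_1, \ldots, X_q)$ of $V(G)$ is a distance-$r$ $D_q$-partition of $G$ if and only if $\calX$ is a $D_q$-partition of $G^r$. This reduces the decision question on $G$ to the decision question on $G^r$ with the same matrix $D_q$.

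Next, I would invoke Lemma~\ref{lem:powerbimim}: the given branch decomposition $(\decT, \decf)$ of $G$, which has bi-mim-width $w$, is at the same time a branch decomposition of $G^r$ of bi-mim-width at most $rw$. Since $V(G^r) = V(G)$ and $(\decT, \decf)$ depends only on the leaf-labeling by vertices, no recomputation of the decomposition is required.

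The algorithm is then: compute $G^r$ (e.g.\ by running BFS from every vertex in time $O(n^3)$ or by repeated boolean matrix multiplication in time $n^{O(1)}\log r$), and apply the LCVP algorithm of Corollary~\ref{cor:lcvp:bimim} to the instance $(G^r, (\decT, \decf), D_q)$, whose bi-mim-width is at most $rw$ and whose $d$-value is still $d = d(D_q)$. Plugging into Corollary~\ref{cor:lcvp:bimim}, the runtime of this call is
\[
\calO\bigl(q \cdot n^{3qd(rw) + 2}\bigr) \;=\; \calO\bigl(q \cdot n^{3qdrw + 2}\bigr),
\]
which dominates the polynomial cost of computing $G^r$, giving the claimed bound. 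There is no real obstacle here beyond checking the two equivalences above; the entire argument is a direct analogue of the LCVS distance-$r$ case, relying on Lemma~\ref{lem:powerbimim} to transfer the width bound from $G$ to $G^r$.
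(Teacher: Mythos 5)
Your proposal is correct and follows essentially the same route as the paper: reduce the distance-$r$ $D_q$-partition question on $G$ to the ordinary $D_q$-partition question on $G^r$, use Lemma~\ref{lem:powerbimim} to conclude that $(\decT,\decf)$ is a branch decomposition of $G^r$ of bi-mim-width at most $rw$, and invoke Corollary~\ref{cor:lcvp:bimim}. The paper states this only as ``by similar reasoning as above,'' so your write-up is simply a more explicit version of the intended argument.
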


\section{Conclusion}
We introduced the digraph width measure bi-mim-width, and showed that directed LCVS and LCVP problems (and their distance-$r$ versions) can be solved in polynomial time if the input graph is given together with a branch decomposition of constant bi-mim-width.
We showed that several classes of intersection digraphs have constant bi-mim-width which adds a large number of polynomial-time algorithms for locally checkable problems related to domination and independence (given a representation) to the relatively sparse literature on the subject.
Intersection digraph classes such as interval digraphs seem too complex to give polynomial-time algorithms for optimization problems.
Our work points to reflexivity as a reasonable additional restriction to give successful algorithmic applications of intersection digraphs, while maintaining a high degree of generality. 
Note that for intersection digraphs, reflexivity gives more structure than just adding loops.

As our algorithms rely on a representation of the input digraphs being provided at the input, we are naturally interested in computing representations of intersection digraph classes of bounded bi-mim-width in polynomial time. So far, this is only known for (reflexive) interval digraphs.
We have shown bounds on the bi-mim-width of adjusted permutation and adjusted rooted directed path digraphs, but have not been able to show a bound on their reflexive counterparts, which we leave as an open question.

In Lemma~\ref{lem:powerbimim}, we proved that  the $r$-th power of a digraph of bi-mim-width $w$ has bi-mim-width at most $rw$. For undirected graphs, there is a bound that does not depend on $r$~\cite{JaffkeKST2019}. We leave an open question whether a bound that does not depend on $r$ exists.

\bibliographystyle{plain}
\bibliography{bimim}

\begin{thebibliography}{10}

\bibitem{AlonEtAl2020}
Noga Alon, J{\o}rgen Bang{-}Jensen, and St{\'{e}}phane Bessy.
\newblock Out-colourings of digraphs.
\newblock {\em J. Graph Theory}, 93(1):88--112, 2020.

\bibitem{Arumugan}
S.~Arumugam, K.~Jacob, and Lutz Volkmann.
\newblock Total and connected domination in digraphs.
\newblock {\em Australas. {J} Comb.}, 39:283--292, 2007.

\bibitem{BangJensenEtAl2018}
J{\o}rgen Bang{-}Jensen, St{\'{e}}phane Bessy, Fr{\'{e}}d{\'{e}}ric Havet, and
  Anders Yeo.
\newblock Out-degree reducing partitions of digraphs.
\newblock {\em Theor. Comput. Sci.}, 719:64--72, 2018.

\bibitem{BangJensenEtAl2019}
J{\o}rgen Bang{-}Jensen, St{\'{e}}phane Bessy, Fr{\'{e}}d{\'{e}}ric Havet, and
  Anders Yeo.
\newblock Bipartite spanning sub(di)graphs induced by 2-partitions.
\newblock {\em J. Graph Theory}, 92(2):130--151, 2019.

\bibitem{BangJensenChristiansen2018}
J{\o}rgen Bang{-}Jensen and Tilde~My Christiansen.
\newblock Degree constrained 2-partitions of semicomplete digraphs.
\newblock {\em Theor. Comput. Sci.}, 746:112--123, 2018.

\bibitem{BangJensenEtAl2016}
J{\o}rgen Bang{-}Jensen, Nathann Cohen, and Fr{\'{e}}d{\'{e}}ric Havet.
\newblock Finding good 2-partitions of digraphs {II.} {E}numerable properties.
\newblock {\em Theor. Comput. Sci.}, 640:1--19, 2016.

\bibitem{digraphbook2018}
J{\o}rgen Bang-Jensen and Gregory Gutin, editors.
\newblock {\em Classes of directed graphs}.
\newblock Springer Monographs in Mathematics. Springer, Cham, 2018.

\bibitem{Bange}
David~W. Bange, Anthony~E. Barkauskas, Linda~H. Host, and Lane~H. Clark.
\newblock Efficient domination of the orientations of a graph.
\newblock {\em Discret. Math.}, 178(1-3):1--14, 1998.

\bibitem{BeinekeZ1982}
Lowell~W. Beineke and Christina~M. Zamfirescu.
\newblock Connection digraphs and second-order line digraphs.
\newblock {\em Discrete Math.}, 39(3):237--254, 1982.

\bibitem{BelmonteV2013}
R{\'e}my Belmonte and Martin Vatshelle.
\newblock Graph classes with structured neighborhoods and algorithmic
  applications.
\newblock {\em Theor. Comput. Sci.}, 511:54--65, 2013.

\bibitem{BiroHT1992}
Mikl{\'{o}}s Bir{\'{o}}, Mih{\'{a}}ly Hujter, and Zsolt Tuza.
\newblock Precoloring extension. {I}. {I}nterval graphs.
\newblock {\em Discret. Math.}, 100(1-3):267--279, 1992.

\bibitem{BrandstadtLS1999}
Andreas Brandst\"{a}dt, Van~Bang Le, and Jeremy~P. Spinrad.
\newblock {\em Graph classes: a survey}.
\newblock SIAM Monographs on Discrete Mathematics and Applications. Society for
  Industrial and Applied Mathematics (SIAM), Philadelphia, PA, 1999.

\bibitem{BrettellMP2020}
Nick Brettell, Andrea Munaro, and Dani\"{e}l Paulusma.
\newblock Solving problems on generalized convex graphs via mim-width.
\newblock {\em preprint}, arxiv.org/abs/2008.09004, 2020.

\bibitem{Bui-XuanTV13}
Binh-Minh Bui-Xuan, Jan~Arne Telle, and Martin Vatshelle.
\newblock Fast dynamic programming for locally checkable vertex subset and
  vertex partitioning problems.
\newblock {\em Theor. Comput. Sci.}, 511:66--76, 2013.

\bibitem{CML}
Domingos~Moreira Cardoso, Marcin Kaminski, and Vadim~V. Lozin.
\newblock Maximum \emph{k} -regular induced subgraphs.
\newblock {\em J. Comb. Optim.}, 14(4):455--463, 2007.

\bibitem{CMC}
Michael Cary, Jonathan Cary, and Savari Prabhu.
\newblock Independent domination in directed graphs.
\newblock {\em Communications in Combinatorics and Optimization}, 6(1):67--80,
  2021.

\bibitem{CDSS}
Gary Chartrand, Peter Dankelmann, Michelle Schultz, and Henda~C. Swart.
\newblock Twin domination in digraphs.
\newblock {\em Ars Comb.}, 67, 2003.

\bibitem{Courcelle1994}
Bruno Courcelle.
\newblock The monadic second order logic of graphs {VI:} on several
  representations of graphs by relational structures.
\newblock {\em Discret. Appl. Math.}, 54(2-3):117--149, 1994.

\bibitem{Diestel2010}
Reinhard Diestel.
\newblock {\em Graph Theory}, volume 173 of {\em Graduate Texts in
  Mathematics}.
\newblock Springer, Heidelberg, fourth edition, 2010.

\bibitem{FederHHR2012}
Tom\'{a}s Feder, Pavol Hell, Jing Huang, and Arash Rafiey.
\newblock Interval graphs, adjusted interval digraphs, and reflexive list
  homomorphisms.
\newblock {\em Discrete Appl. Math.}, 160(6):697--707, 2012.

\bibitem{FominGR2020}
Fedor~V. Fomin, Petr~A. Golovach, and Jean-Florent Raymond.
\newblock On the tractability of optimization problems on {$H$}-graphs.
\newblock {\em Algorithmica}, 82(9):2432--2473, 2020.

\bibitem{Fu68}
Yumin Fu.
\newblock Dominating set and converse dominating set of a directed graph.
\newblock {\em The American Mathematical Monthly}, 75(8):861--863, 1968.

\bibitem{Gavril1975}
F\u{a}nic\u{a} Gavril.
\newblock A recognition algorithm for the intersection graphs of directed paths
  in directed trees.
\newblock {\em Discrete Math.}, 13(3):237--249, 1975.

\bibitem{GLP}
J.~Ghoshal, Renu Laskar, and D.~Pillone.
\newblock Topics on domination in directed graphs.
\newblock In Theresa~W. Haynes, Stephen~T. Hedetniemi, and Peter~J. Slater,
  editors, {\em Domination in Graphs: Advanced Topics}. Taylor \& Francis,
  2017.

\bibitem{HellNesetril2004}
Pavol Hell and Jaroslav Nesetril.
\newblock {\em Graphs and homomorphisms}, volume~28 of {\em Oxford lecture
  series in mathematics and its applications}.
\newblock Oxford University Press, 2004.

\bibitem{HogemoTV2019}
Svein H{\o}gemo, Jan~Arne Telle, and Erlend~Raa V{\aa}gset.
\newblock Linear mim-width of trees.
\newblock In Ignasi Sau and Dimitrios~M. Thilikos, editors, {\em
  Graph-Theoretic Concepts in Computer Science}, pages 218--231, Cham, 2019.
  Springer International Publishing.

\bibitem{JaffkeKST2019}
Lars Jaffke, O{-}joung Kwon, Torstein J.~F. Str{\o}mme, and Jan~Arne Telle.
\newblock Mim-width {III}. {G}raph powers and generalized distance domination
  problems.
\newblock {\em Theoret. Comput. Sci.}, 796:216--236, 2019.

\bibitem{JohnsonRST2001}
Thor Johnson, Neil Robertson, P.~D. Seymour, and Robin Thomas.
\newblock Directed tree-width.
\newblock {\em J. Combin. Theory Ser. B}, 82(1):138--154, 2001.

\bibitem{KangKST2017}
Dong~Yeap Kang, O-joung Kwon, Torstein J.~F. Str{\o}mme, and Jan~Arne Telle.
\newblock A width parameter useful for chordal and co-comparability graphs.
\newblock {\em Theoret. Comput. Sci.}, 704:1--17, 2017.

\bibitem{Kante2007}
Mamadou~Moustapha Kant\'{e}.
\newblock The rank-width of directed graphs.
\newblock {\em preprint}, arxiv.org/abs/0709.1433, 2007.

\bibitem{KanteR2013}
Mamadou~Moustapha Kant\'{e} and Michael Rao.
\newblock The rank-width of edge-coloured graphs.
\newblock {\em Theory Comput. Syst.}, 52(4):599--644, 2013.

\bibitem{Mengel18}
Stefan Mengel.
\newblock Lower bounds on the mim-width of some graph classes.
\newblock {\em Discrete Appl. Math.}, 248:28--32, 2018.

\bibitem{Muller1997}
Haiko M\"{u}ller.
\newblock Recognizing interval digraphs and interval bigraphs in polynomial
  time.
\newblock {\em Discrete Appl. Math.}, 78(1-3):189--205, 1997.

\bibitem{OBB}
Lyes Ouldrabah, Mostafa Blidia, and Ahmed Bouchou.
\newblock On the k-domination number of digraphs.
\newblock {\em J. Comb. Optim.}, 38(3):680--688, 2019.

\bibitem{Oum05}
Sang{-}il Oum.
\newblock Rank-width and vertex-minors.
\newblock {\em J. Comb. Theory, Ser. {B}}, 95(1):79--100, 2005.

\bibitem{Oum2008}
Sang-il Oum.
\newblock Rank-width is less than or equal to branch-width.
\newblock {\em J. Graph Theory}, 57(3):239--244, 2008.

\bibitem{Erich1994}
Erich Prisner.
\newblock Algorithms for interval catch digraphs.
\newblock {\em Discret. Appl. Math.}, 51(1-2):147--157, 1994.

\bibitem{Ramoul}
Amina Ramoul and Mostafa Blidia.
\newblock A new generalization of kernels in digraphs.
\newblock {\em Discret. Appl. Math.}, 217:673--684, 2017.

\bibitem{Schaudt}
Oliver Schaudt.
\newblock Efficient total domination in digraphs.
\newblock {\em J. Discrete Algorithms}, 15:32--42, 2012.

\bibitem{SenDRW1989}
M.~Sen, S.~Das, A.~B. Roy, and D.~B. West.
\newblock Interval digraphs: an analogue of interval graphs.
\newblock {\em J. Graph Theory}, 13(2):189--202, 1989.

\bibitem{SenDW1989}
M.~Sen, S.~Das, and Douglas~B. West.
\newblock Circular-arc digraphs: a characterization.
\newblock {\em J. Graph Theory}, 13(5):581--592, 1989.

\bibitem{Sopena1997}
Eric Sopena.
\newblock The chromatic number of oriented graphs.
\newblock {\em J. Graph Theory}, 25(3):191--205, 1997.

\bibitem{Sopena2016}
{\'{E}}ric Sopena.
\newblock Homomorphisms and colourings of oriented graphs: An updated survey.
\newblock {\em Discret. Math.}, 339(7):1993--2005, 2016.

\bibitem{TelleP1997}
Jan~Arne Telle and Andrzej Proskurowski.
\newblock Algorithms for vertex partitioning problems on partial {$k$}-trees.
\newblock {\em SIAM J. Discrete Math.}, 10(4):529--550, 1997.

\bibitem{VatshelleThesis}
Martin Vatshelle.
\newblock {\em New Width Parameters of Graphs}.
\newblock PhD thesis, Univ. Bergen, 2012.

\bibitem{NeumannM1944}
John von Neumann and Oskar Morgenstern.
\newblock {\em Theory of {G}ames and {E}conomic {B}ehavior}.
\newblock Princeton University Press, Princeton, New Jersey, 1944.

\end{thebibliography}

\end{document}